\newtheorem{thm}{Theorem}[section]
\newtheorem{lemma}[thm]{Lemma}
\newtheorem{prop}[thm]{Proposition}
\theoremstyle{definition}
\theoremstyle{remark}
\newtheorem{remark}[thm]{Remark}
\numberwithin{equation}{section}
\newcommand{\cA}{{\mathcal A}}
\newcommand{\cB}{{\mathcal B}}
\newcommand{\cF}{{\mathcal F}}
\newcommand{\RR}{{\mathbb R}}
\newcommand{\NN}{{\mathbb N}}
\newcommand{\Var}{\operatorname{Var}}
\newcommand{\Exp}{\mathbb E}
\newcommand{\PP}{\mathbb P}
\newcommand{\Cov}{\operatorname{Cov}}
\newcommand{\dd}{\mathrm{d}}
\newcommand{\ee}{\mathrm{e}}
\newcommand{\proofend}{\hfill\mbox{$\Box$}}
\begin{document}

\sloppy

\title[Sample path deviations of process bridges]{Sample path deviations of the Wiener and \\
 the Ornstein-Uhlenbeck process from its bridges}

\author{M\'aty\'as Barczy}
\address{M\'aty\'as Barczy, Faculty of Informatics, University of Debrecen, Pf.12, H-4010 Debrecen, Hungary}
\email{barczy.matyas\@@{}inf.unideb.hu}
\urladdr{http://www.inf.unideb.hu/valseg/dolgozok/barczy/barczy\_angol.html}
\thanks{The first author has been supported by the Hungarian Scientific Research Fund under
 Grant No.\ OTKA T-079128.
This work has been finished while M. Barczy was on a post-doctoral position at the Laboratoire de Probabilit\'es
 et Mod\`{e}les Al\'eatoires, University Pierre-et-Marie Curie, thanks to NKTH-OTKA-EU FP7 (Marie Curie action)
 co-funded 'MOBILITY' Grant No. OMFB-00610/2010.}

\author{Peter Kern}
\address{Peter Kern, Mathematisches Institut, Heinrich-Heine-Universit\"at D\"usseldorf,
Universit\"atsstr.~1, D-40225 D\"usseldorf, Germany}
\email{kern\@@{}math.uni-duesseldorf.de}
\urladdr{http://www.math.uni-duesseldorf.de/$\sim$stoch/Kern/kern.htm}

\date{\today}

\begin{abstract}
We study sample path deviations of the Wiener process from three different representations
  of its bridge: anticipative version, integral representation and space-time transform.
Although these representations of the Wiener bridge are equal in law, their sample path behavior is quite different. Our results nicely demonstrate this fact.
We calculate and compare the expected absolute, quadratic and conditional quadratic path deviations
 of the different representations of the Wiener bridge from the original Wiener process.
It is further shown that the presented qualitative behavior of sample path deviations is not
 restricted only to the Wiener process and its bridges.
Sample path deviations of the Ornstein-Uhlenbeck process from its bridge versions are also considered and
 we give some quantitative answers also in this case.
\end{abstract}

\keywords{Sample path deviation, Brownian bridge, Ornstein-Uhlenbeck bridge, anticipative version, integral representation, space-time transform.}

\subjclass[2010]{Primary 60G17; Secondary 60G15, 60J65.}

\maketitle

\baselineskip=18pt

\section{Introduction}

Let $(W_{t})_{t\geq0}$ be a standard one-dimensional Wiener process on a filtered probability space
  $(\Omega,\cF,(\cF_t)_{t\geq 0},P)$, where the filtration $(\cF_t)_{t\geq 0}$ is the usual
 augmentation of the natural filtration of the Wiener process $W$
 (see, e.g., Karatzas and Shreve \cite[Section 5.2.A]{KarShr}).
We consider the following versions of the Wiener bridge from $a$ to $b$ over the time-interval
 $[0,T]$, where $a,b\in\RR$ (see, e.g., Karatzas and Shreve \cite[Section 5.6.B]{KarShr}):

\noindent\text{\bf 1. Anticipative version }
\[
 \displaystyle W_{t}^{{\rm av}}
       =a+(b-a)\frac{t}{T}+\left(W_{t}-\frac{t}{T}\,W_{T}\right),\qquad 0\leq t\leq T.
\]
\text{\bf 2. Integral representation }
\[
 W_{t}^{{\rm ir}}
   =
     \begin{cases}
      \displaystyle a+(b-a)\frac{t}{T}+\int_{0}^t\frac{T-t}{T-s}\,\dd W_{s} & \text{if \ $0\leq t<T$,}\\
        b & \text{if \ $t=T$}.
      \end{cases}
\]
\text{\bf 3. Space-time transform }
\[
  W_{t}^{{\rm st}}
  =
   \begin{cases}
     \displaystyle a+(b-a)\frac{t}{T}+\frac{T-t}{T}\,W_{\frac{tT}{T-t}} & \text{ if \ $0\leq t<T$},\\
         b & \text{ if \ $t=T$.}
    \end{cases}
\]
Here the attribute anticipative indicates that for the definition of
 \ $W_{t}^{{\rm av}}$ \ we use the random variable \ $W_T$, \ where the time point
  \ $T$ \ follows the time point \ $t$.
\ In the sequel we will use the notation $(W_{t}^{\rm br})_{t\in[0,T]}$ if the version of
 the bridge is not specified.
All the bridge versions above are Gauss processes with the same finite-dimensional distributions.
This can be easily calculated, since the versions all have mean function $\Exp(W_t^{{\rm br}})=a+(b-a)\frac{t}{T}$,
 $0\leq t\leq T$ and for $0\leq s\leq t<T$ we have the covariance function
\begin{align*}
\Cov(W_{s}^{\rm av},W_{t}^{\rm av}) & =\Cov\left(W_s-\frac{s}{T}W_T,W_t-\frac{t}{T}W_T\right)\\
& = s-\frac{st}{T}-\frac{st}{T}+\frac{st}{T}=s\frac{T-t}{T},\\
    \Cov(W_{s}^{\rm ir},W_{t}^{\rm ir}) &
 =\Cov\left(\int_0^s\frac{T-s}{T-r}\,\dd W_r,\int_0^t\frac{T-t}{T-r}\,\dd W_r\right)\\
& =\int_0^s\frac{(T-s)(T-t)}{(T-r)^2}\,\dd r=(T-s)(T-t)\left(\frac1{T-s}-\frac1T\right)=s\frac{T-t}{T}
\intertext{and}
\Cov(W_{s}^{\rm st},W_{t}^{\rm st}) &
  =\Cov\left(\frac{T-s}{T}W_{\frac{sT}{T-s}},\frac{T-t}{T}W_{\frac{tT}{T-t}}\right)\\
& = \frac{(T-s)(T-t)}{T^2}\cdot\frac{sT}{T-s}=s\frac{T-t}{T},
\end{align*}
 where we used that the function $[0,T)\ni t\mapsto \frac{tT}{T-t}$ is monotone increasing.
Altogether, for all $0\leq s\leq t<T$ we have
\begin{equation}\label{bridgecov}
\Cov(W_{s}^{\rm br},W_{t}^{\rm br})=s\frac{T-t}{T}.
\end{equation}

We note that the finite dimensional distributions of the above Wiener bridge versions coincide
 with the conditional finite dimensional distributions of the Wiener process $(a+W_{t})_{t\in[0,T]}$ starting
 in $a$ and conditioned on $\{W_{T}=b\}$; see, e.g., Problem 5.6.13 in Karatzas and Shreve
 \cite{KarShr} or Chapter IV.4 in Borodin and Salminen \cite{BorSal}.
Bridges of Gaussian processes have been generally defined by Gasbarra et al. \cite{GasSotVal}, while from the Markovian
 point of view the reader may consult Fitzsimmons et al. \cite{FitPitYor}, Barczy and Pap \cite{BarPap2},
 Chaumont and Uribe Bravo \cite{ChaBra}, and the more recent Bryc and Weso\l owski
 \cite{BryWes} which deals with the inhomogeneous case.

 Moreover, it follows from the definitions that all bridge versions have almost sure continuous sample paths.
 The (left) continuity of the trajectories at $t=T$ is not obvious in case of the integral representation
 and space-time transform.
Corollary 5.6.10 in Karatzas and Shreve \cite{KarShr} yields the desired continuity for the integral representation,
 whereas the strong law of large numbers for a standard Wiener process (see, e.g., Problem 2.9.3
 in Karatzas and Shreve \cite{KarShr}) for the space-time transform.
Hence the anticipative version \ $W^{\rm av}$, \ the integral representation \ $W^{\rm ir}$ \ and
 the space-time transform \ $W^{\rm st}$ \ induce the same probability measure on
 \ $(C[0,T],\cB(C[0,T]))$, \ where \ $C[0,T]$ \ is the space of continuous functions from \ $[0,T]$
 \ into \ $\RR$ \ and \ $\cB(C[0,T])$ \ denotes the Borel $\sigma$-algebra on \ $C[0,T]$.
\ This underlines and explains the definition of a Wiener bridge from \ $a$ \ to \ $b$ \ over the time-interval
\ $[0,T]$ \ (see, e.g., Karatzas and Shreve \cite[Definition 5.6.12]{KarShr}), namely,
 it is any almost surely continuous Gauss process having mean function
 \ $a+(b-a)\frac{t}{T}$, $t\in[0,T]$, \ and covariance function given in \eqref{bridgecov}.

Furthermore, according to Section 5.6.B in Karatzas and Shreve \cite{KarShr}
 or Example 8.5 in Chapter IV in Ikeda and Watanabe \cite{IkeWat},
 the above versions of the Wiener bridge are solutions to the linear
 stochastic differential equation (SDE)
\begin{equation}\label{bridgesde}
 \dd W_{t}^{\rm br}
  = \frac{b-W_{t}^{\rm br}}{T-t}\,\dd t + \dd W_{t}\,,\quad 0\leq t<T\,,
     \quad\text{ with }\quad W_{0}^{\rm br}=a.
\end{equation}
By Theorem 5.2.1 in {\O}ksendal \cite{Oks} or Theorem 2.32 in Chapter III
 in Jacod and Shiryaev \cite{Shi}, strong uniqueness holds for the SDE \eqref{bridgesde},
 and \ $(W_{t}^{\rm ir})_{t\in[0,T)}$ \ is the unique strong solution of this SDE
 being adapted to the filtration $(\mathcal F_{t})_{t\in[0,T)}$.
Whereas $(W_{t}^{\rm av})_{t\in[0,T)}$ is only a weak solution to the SDE \eqref{bridgesde};
 it can not be a strong solution, since the definition of the anticipative representation
 formally requires information about $W_{T}$, although $W_{t}^{\rm av}$ and $W_{T}$ are independent
 for every $t\in[0,T]$ (indeed,
  $\Cov(W_{t}^{\rm av},W_{T})=\Cov(W_{t},W_{T})-\frac{t}{T}\,\Cov(W_{T},W_{T})=0$, $t\in[0,T]$).
The space-time transform representation $(W_{t}^{\rm st})_{t\in[0,T)}$ is only a weak solution
 to the SDE \eqref{bridgesde}, too, since it is adapted
 only to the filtration $(\mathcal F_{\frac{tT}{T-t}})_{t\in[0,T)}$ and
 $\mathcal F_{\frac{tT}{T-t}} \supsetneqq \mathcal F_{t}$, $t\in(0,T)$.
We also note that, even though the three bridge versions have the same law on $(C[0,T],\cB(C[0,T]))$,
 their joint laws together with the Wiener process through which they are constructed, are different
 (see Propositions \ref{correlation} and \ref{prop2}).
Our aim is to elucidate
 their sample path deviations compared to the original Wiener process $(a+W_{t})_{t\in[0,T]}$ starting in $a$.
 A motivation for our study is given at the end of this section.

According to simulation studies, for a typical sample path of the Wiener process the deviations
 from its anticipative bridge version and its space-time transform are larger than from
 its integral representation of the bridge; see Figure \ref{Figure1}.
\begin{figure}[h]
\includegraphics[scale=0.25]{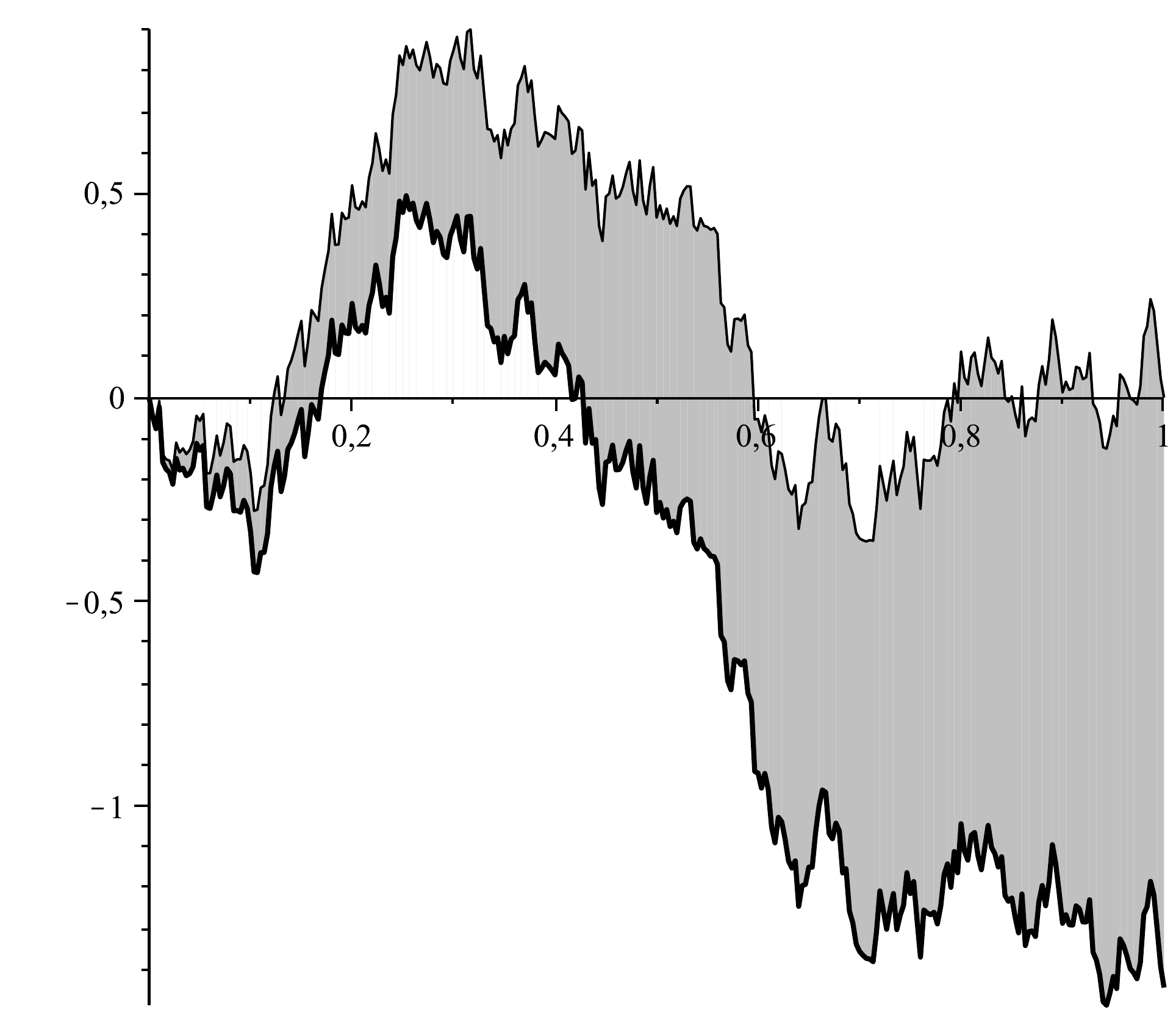}
\includegraphics[scale=0.25]{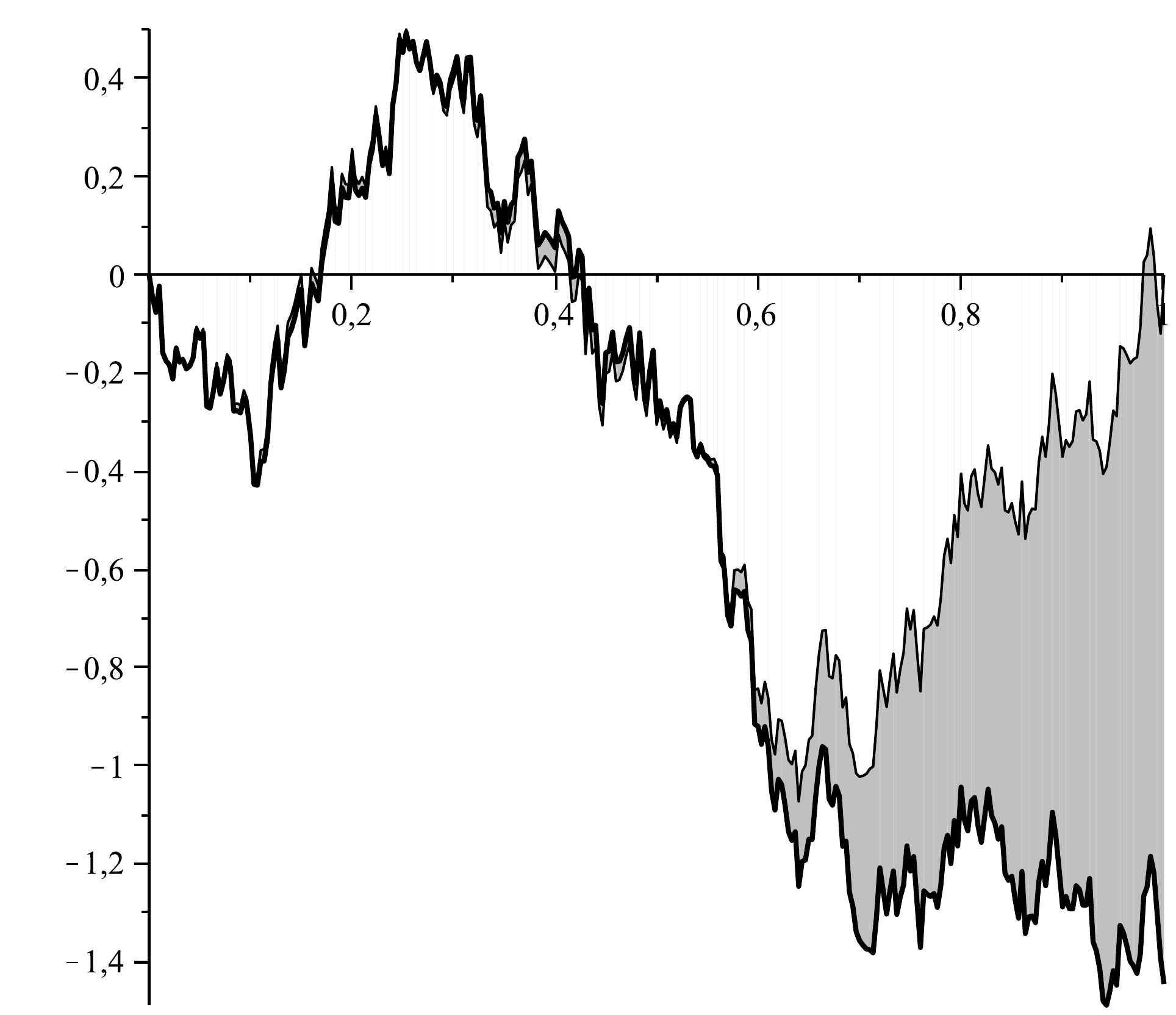}
\includegraphics[scale=0.25]{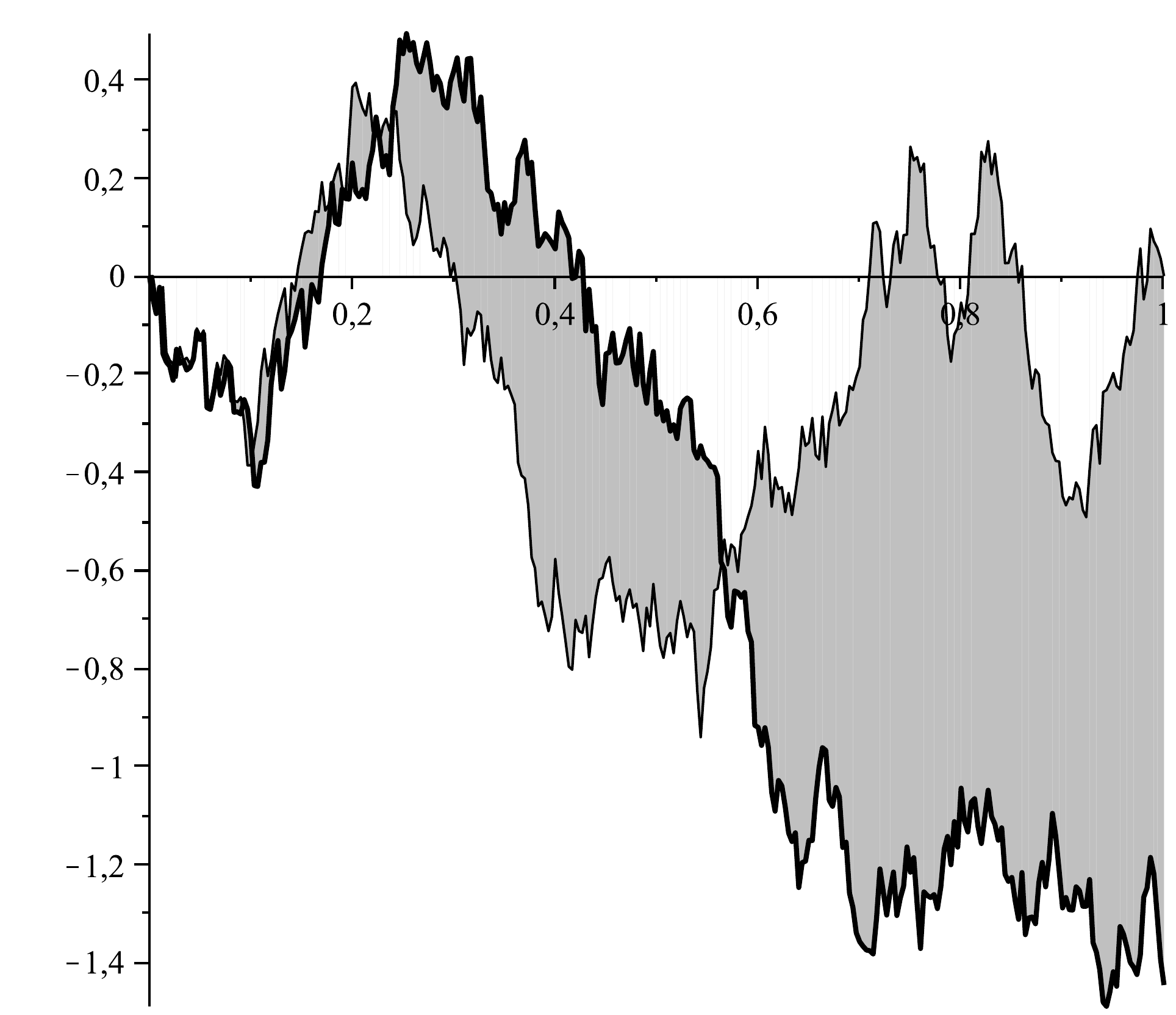}
\includegraphics[scale=0.25]{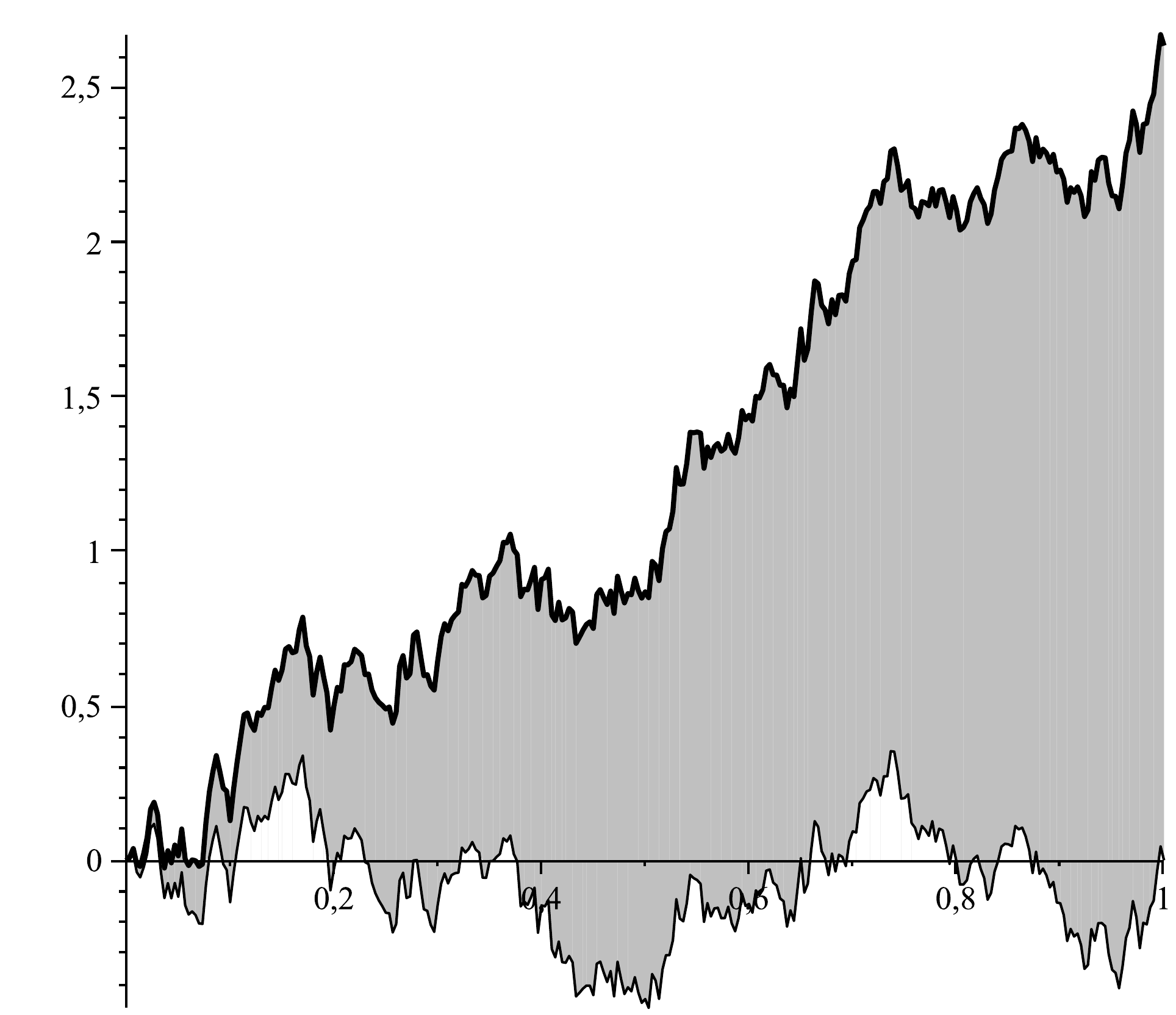}
\includegraphics[scale=0.25]{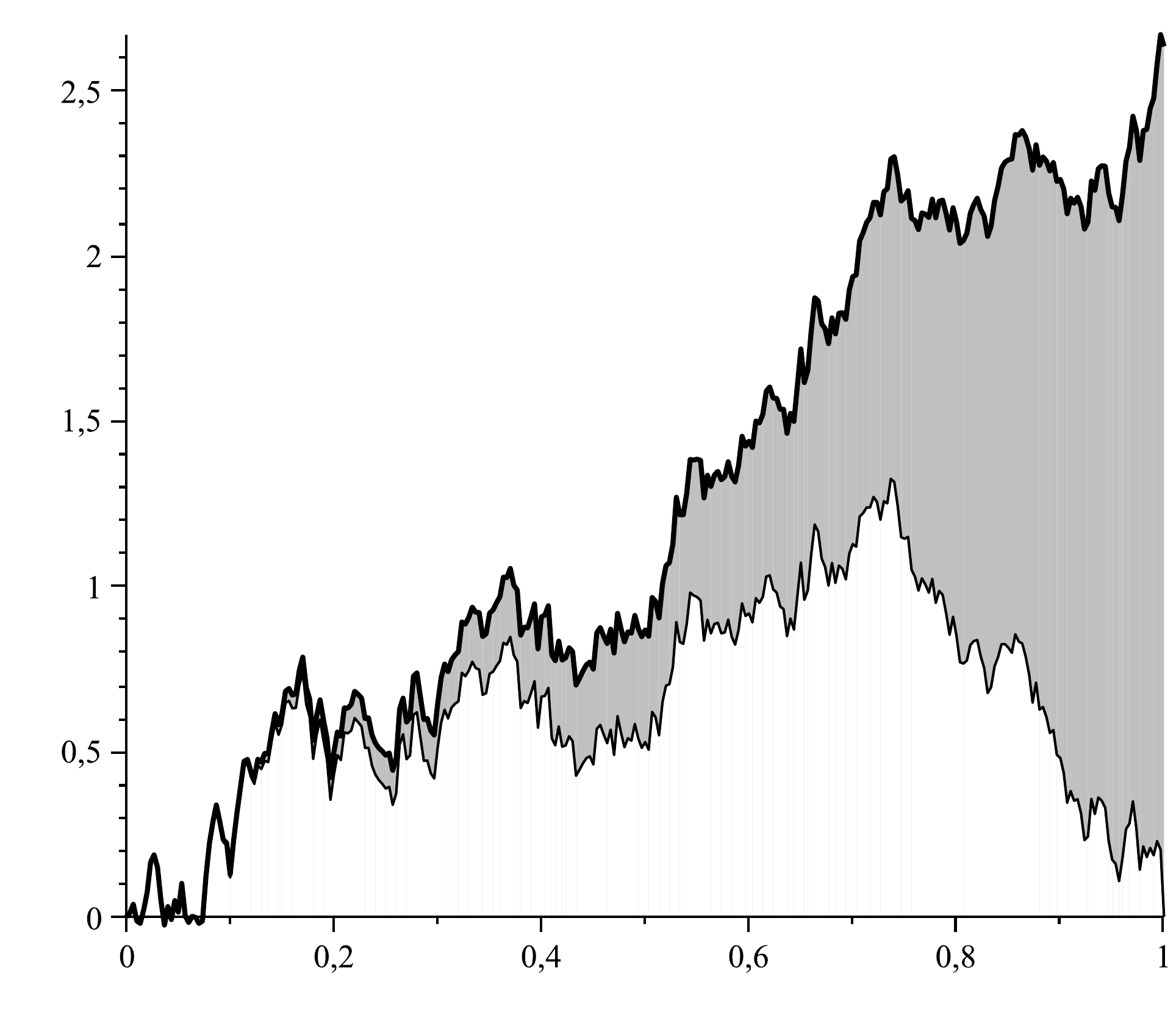}
\includegraphics[scale=0.25]{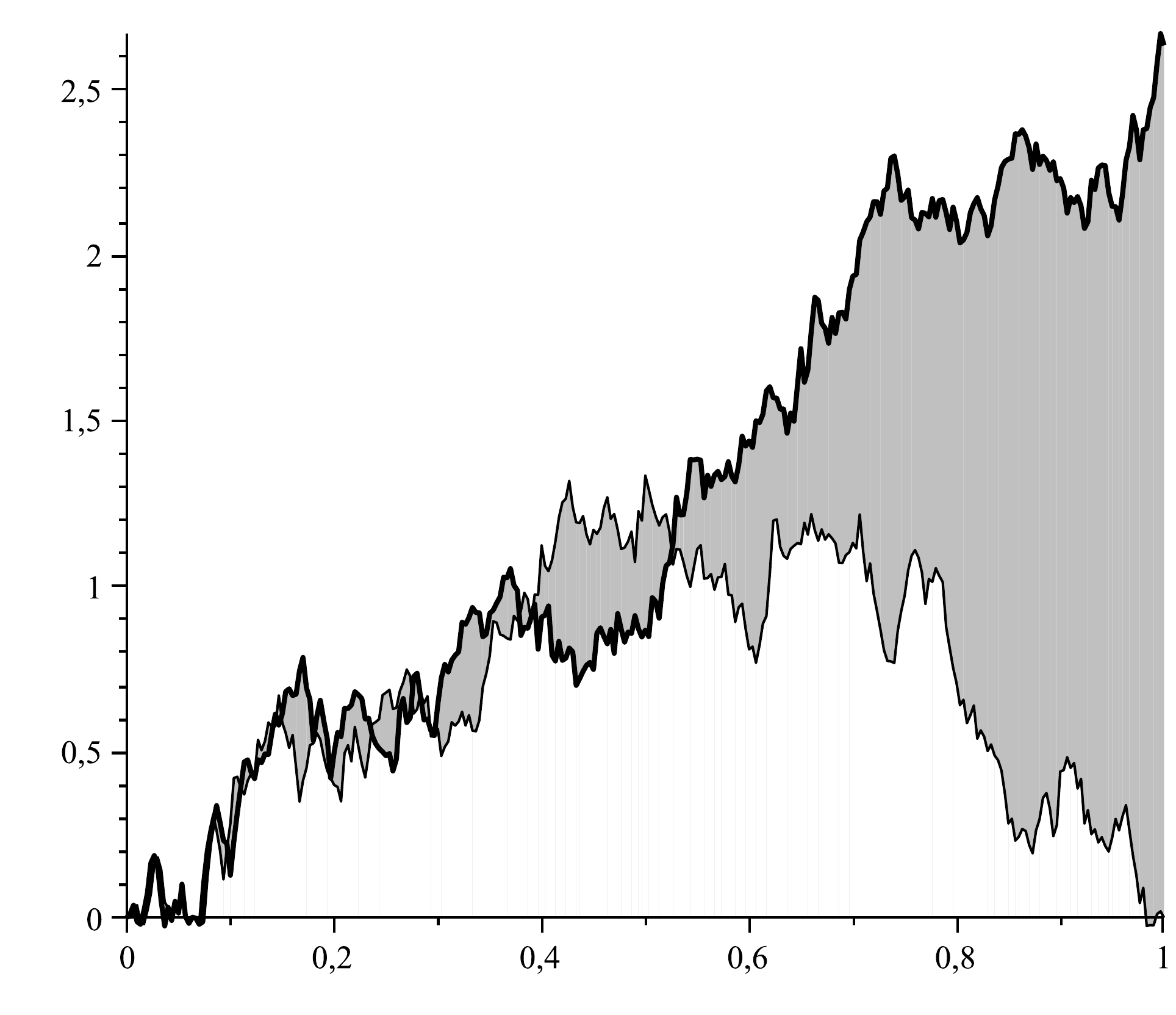}
\caption{\small Two typical sample paths of the Wiener process (rows, thick lines) and its deviations from the
 anticipative version (left column), the integral representation (middle column), and the space-time
 transform (right column) of the Wiener bridge from $0$ to $0$ over the time-interval $[0,1]$.}
\label{Figure1}
\end{figure}
Note that in general the deviation from the space-time transform bridge version is
 hard to compare with the other
 deviations, since $(W_{t}^{\rm st})_{t\in[T/2,T)}$ depends on the non-visible part
 $(W_{t})_{t\in[T,\infty)}$ of the Wiener process.
Our aim is to give quantitative answers to this qualitative behavior observed from simulation studies
 and thus to study the path deviations on $[0,T)$:
\begin{align}
& a+W_{t}-W_{t}^{\rm av}=(a-b)\,\frac{t}{T}+\frac{t}{T}\,W_{T},\nonumber\\
& a+W_{t}-W_{t}^{\rm ir}=(a-b)\,\frac{t}{T}+\int_{0}^t\frac{t-s}{T-s}\,\dd W_{s},\label{devs}\\
& a+W_{t}-W_{t}^{\rm st}=(a-b)\,\frac{t}{T}+\left(W_{t}-\frac{T-t}{T}\,W_{\frac{tT}{T-t}}\right)\nonumber.
\end{align}
Note that the dependence of the path deviations in \eqref{devs} upon the starting and
 endpoint of the bridge ($a$ and $b$) is only via their difference $a-b$.
 Hence without loss of generality we can and will assume $a=0$ in the sequel.

Simulation studies also show that the above typical behavior is reversed in case the endpoint
 $W_{T}$ of the Wiener sample path is close to the prescribed endpoint $b$ of its bridge, namely,
  for such a sample path of the Wiener process the deviation from its anticipative bridge version
  is smaller than from its integral representation of the bridge or from its space-time bridge version;
  see Figure \ref{Figure2}.
\begin{figure}[h]
\includegraphics[scale=0.25]{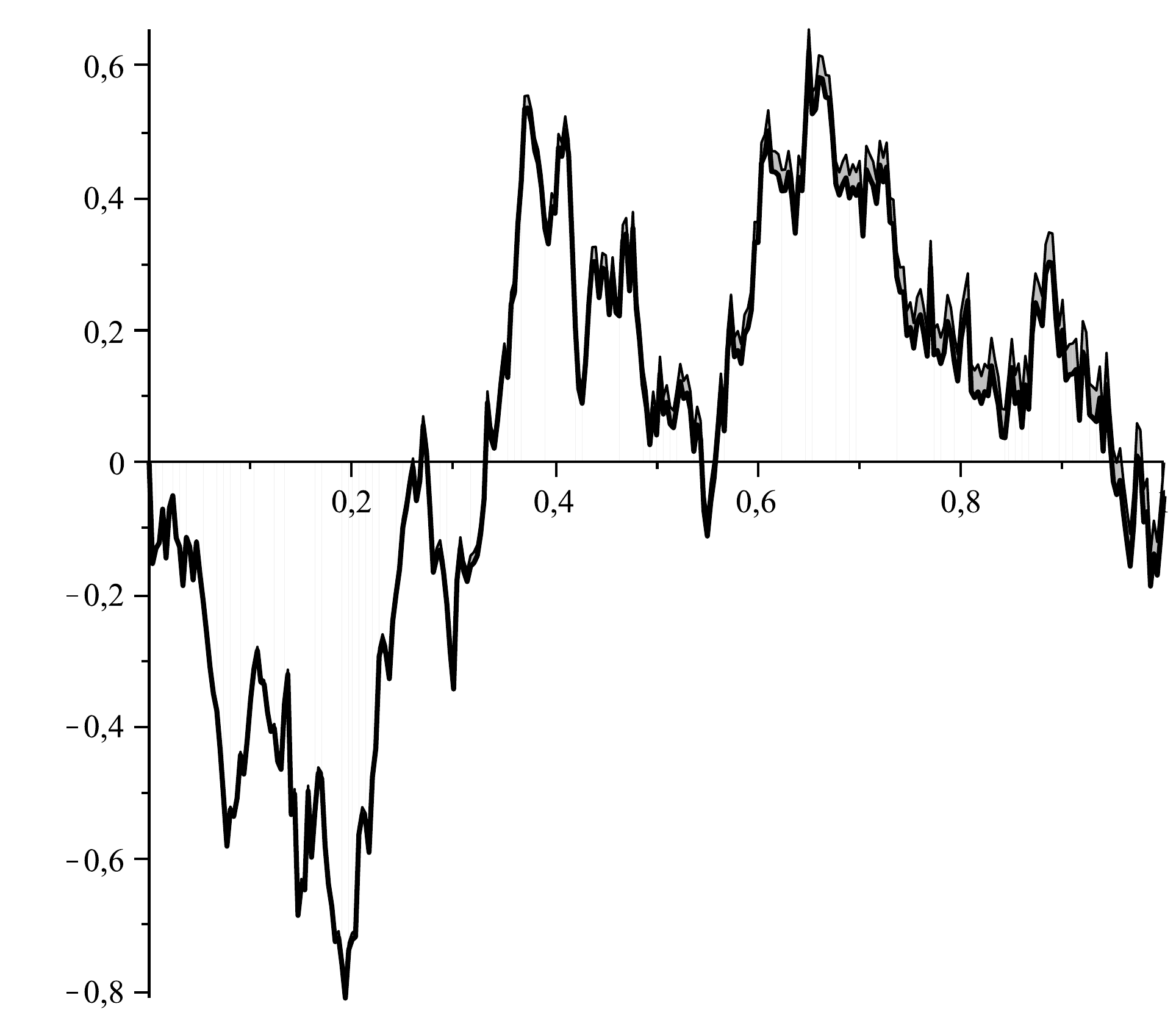}
\includegraphics[scale=0.25]{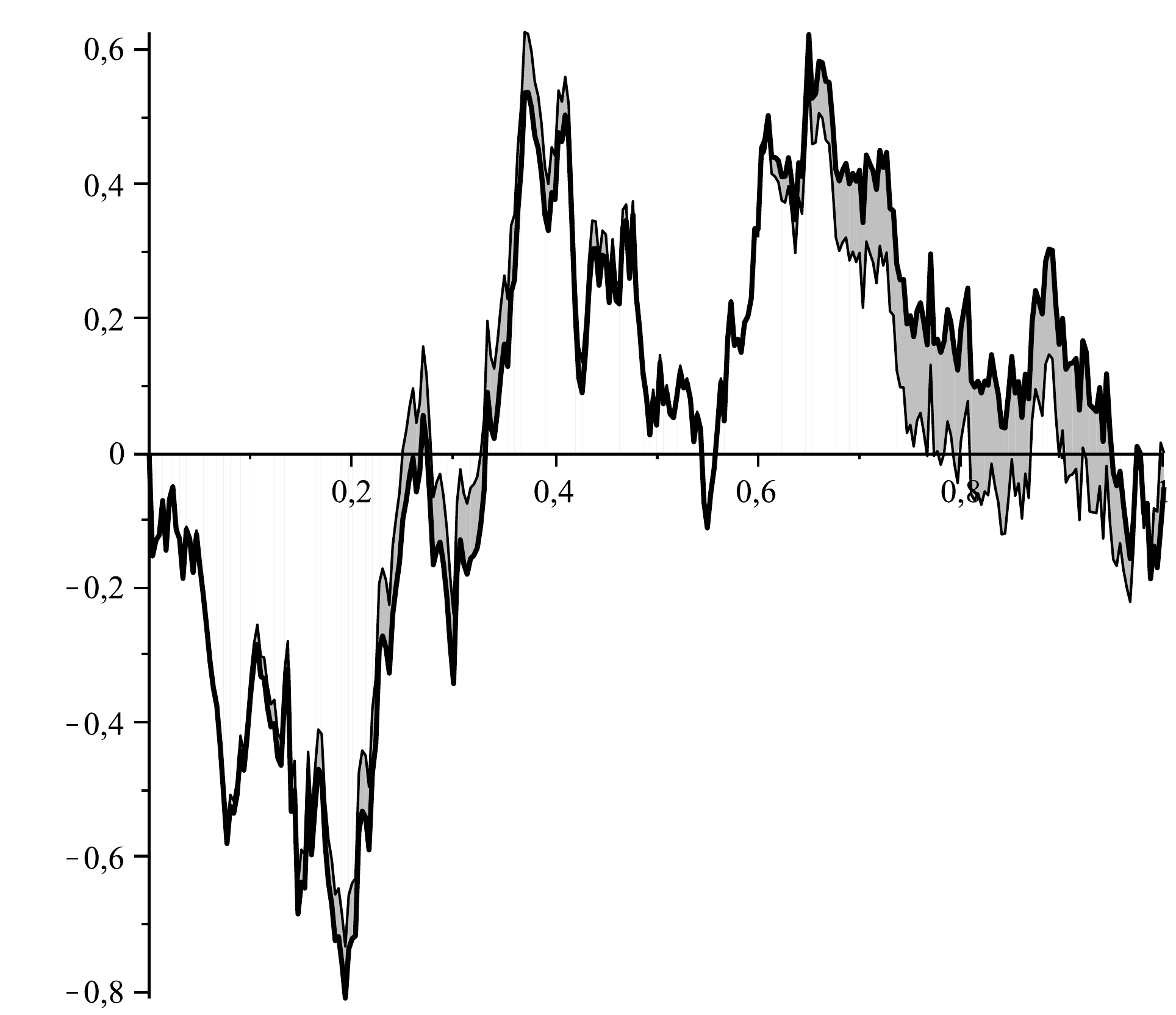}
\includegraphics[scale=0.25]{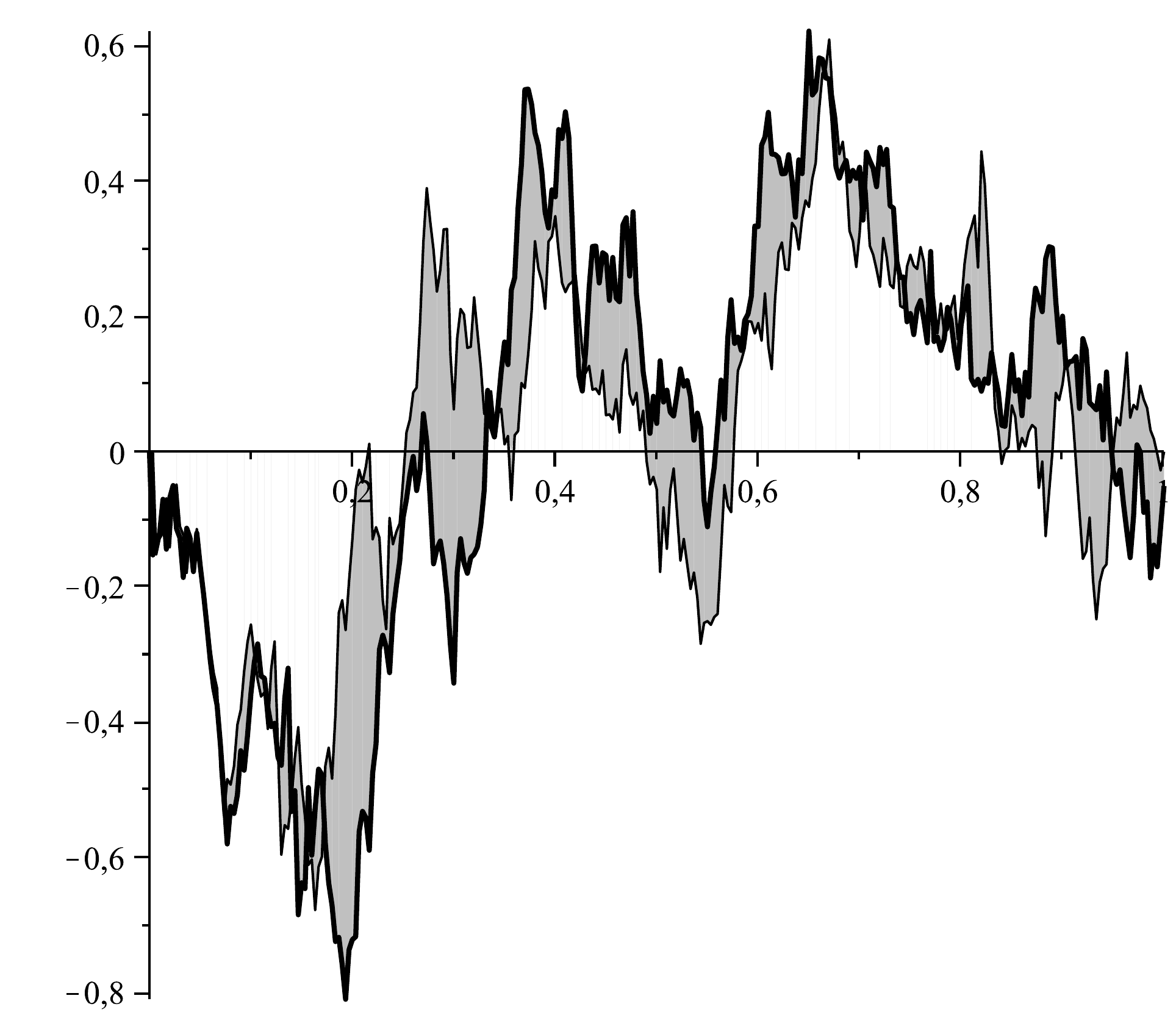}
\caption{\small A sample path of the Wiener process with $W_{1}\approx0$ (thick line) and its deviations  from
 the anticipative version (left), the integral representation (middle), and the space-time transform (right)
 of the Wiener bridge from $0$ to $0$ over the time-interval $[0,1]$.}
\label{Figure2}
\end{figure}
We aim to give quantitative answers to this effect and thus in Section 2
 we will particularly compare the so-called expected $p$-th order sample path deviations
$$
 \Exp\left(\int_{0}^T|W_{t}-W_{t}^{\rm br}|^p\,\dd t\right)
   = \int_{0}^T\Exp\big(|W_{t}-W_{t}^{\rm br}|^p\big)\,\dd t
$$
for $p=1,2$ and in case of \ $p=2$ \ we will explicitly calculate the conditional analogue
$$
  \Exp\left(\int_{0}^T(W_{t}-W_{t}^{\rm br})^2\,\dd t\,\bigg|\,W_{T}=d\right)
   =\int_{0}^T\Exp\big((W_{t}-W_{t}^{\rm br})^2\,\big|\,W_{T}=d\big)\,\dd t
$$
for prescribed endpoints $W_{T}=d$, $d\in\RR$ of the original Wiener process.
In the above formulas, integration over the time-interval $[0,T]$ and taking expectations can be interchanged.
Indeed, since we have continuous sample paths, we can consider monotone approximations of the integrals
 by Riemannian sums with nonnegative summands and then apply the monotone convergence theorem for conditional
 expectations.
 In what follows expected first and second order sample path deviations will be called expected absolute
  and quadratic path deviations, respectively.

We will further show in Section 3 that the above mentioned qualitative behavior of sample path deviations
 is not restricted only to the Wiener process and its bridge versions:
 sample path deviations of the Ornstein-Uhlenbeck process from its bridge versions are also considered.
Here we give some quantitative answers, too, see Theorem \ref{prop10}.

In the Appendix we present an auxiliary result which is used for proving almost sure continuity of the
 integral representation of the Ornstein-Uhlenbeck bridge at the endpoint of the bridge.

Our results are to be seen as paradigmatic examples that give rise for future work concerning more broad questions of how certain pathwise constructions of Gaussian or Markovian bridges can differ, although they obey the same law. The reason for concentrating on the Wiener and on the Ornstein-Uhlenbeck process here is the possibility of giving explicit expressions for
some quantities (such as second moment) related to the path deviations of different bridge versions to the original process through which they are constructed. In particular, the case of an Ornstein-Uhlenbeck process shows that explicit expressions for path deviations can soon become unwieldily. As a future task, one may also address the question of existence of a bridge version that minimizes the distance to the unconditioned stochastic process in a certain sense.
Moreover, one may present other indicators for different sample path behavior of the
 Wiener and Ornstein-Uhlenbeck bridge versions, such as Hellinger distance, and address the question
 for more general process bridges.

To further motivate our study, we point out that similar problems were considered by DasGupta \cite{Das},
 Bharath and Dey \cite{BhaDey} and Balabdaoui and Pitman \cite{BalPit}.
Namely, DasGupta \cite[Theorem 1]{Das} gave an infinite series representation of the expectations
 \[
   \Exp\left(\int_0^\delta\vert W_{t}^{\rm br} - \mu t - W_t \vert\,\dd t \right),
        \qquad \delta\in(0,1],\quad \mu\in\RR,
 \]
 where $(W_t)_{t\in[0,1]}$ and $(W_{t}^{\rm br})_{t\in[0,1]}$ denote respectively a standard
 Wiener process and an independent Wiener bridge with $a=b=0$ and $T=1$. For
 some special values of $\delta$ and $\mu$ the exact values were also calculated.
The motivation of DasGupta for calculating the expectations above is to understand
 whether distinguishing between a Wiener bridge and an independent Wiener process with possible drift
 on the basis of observations at discrete times is intrinsically difficult.
It turned out that distinguishing one from the other is not an easy task.
DasGupta studied the likelihood ratio test for testing the null-hypothesis
 \ $H_0 : X_t = W_t^{\rm br}, \;t\in[0,1]$, \ against the alternative hypothesis
 \ $H_1 : X_t = W_t + \mu t,\, t\in[0,1]$ \ for some \ $\mu\in\RR,$ \ based on discrete
 observations from a process \ $(X_t)_{t\in[0,1]}$.
\ Recently, the question of distinguishing a Wiener process from a Wiener bridge was also considered
 by Bharath and Dey \cite{BhaDey}.
Note that in our setup $(W_t)_{t\in[0,1]}$ and $(W_{t}^{\rm br})_{t\in[0,1]}$ are not independent.
 Hence our results may be useful to answer the question of distinction in case the Wiener bridge
 is constructed by the help of the original Wiener process and not an independent copy.
One can address the same question for Ornstein-Uhlenbeck bridges
 or for more general process bridges.
Our calculations in the Ornstein-Uhlenbeck case can be considered as a first step
 towards the corresponding calculations of Section 2 in DasGupta \cite{Das}.
Balabdaoui and Pitman \cite{BalPit}  gave
 a representation of the maximal difference between a Wiener bridge and its (least) concave
 majorant on the unit interval. As an application, expressions for the distribution,
 density function and moments of this difference were derived.

The presented results might also be applied to the study of animal movements.
Horne et al.~\cite{HGKL} use a two-dimensional Wiener bridge to model the unknown movement
 of an animal between two consecutively observed positions of the animal.
The model is used to investigate questions on the mean
 occupation frequency $\Exp(\frac1T\int_0^T1_A(X_{1,t}^{\rm br},X_{2,t}^{\rm br})\,\dd t)$ in a region
 $A\in\mathcal B(\RR^2)$, where $(X_{1,t}^{\rm br})_{t\in[0,T]}$ and
 $(X_{2,t}^{\rm br})_{t\in[0,T]}$ are independent Wiener bridges
 such that $(X_{1,0}^{\rm br},X_{2,0}^{\rm br})$ and
 $(X_{1,T}^{\rm br},X_{2,T}^{\rm br})$ are the starting and ending positions of the animal
 at time $0$ and $T$, respectively.
If the region $A$ depends on the original (independent) Wiener processes
 $(X_{1,t})_{t\in[0,T]}$, $(X_{2,t})_{t\in[0,T]}$, e.g., for questions concerning the closeness
 of the animal's path to the path of a Wiener process, our results show that the expected occupation
 frequency heavily depends on the chosen version of the bridge.

\section{Path deviation of the Wiener process from its bridges}

\subsection{An indicator for different sample path behavior of Wiener bridge versions}

A first indicator for different sample path behavior of the bridge versions
 is the correlation function $\varrho(W_{t}^{\rm br},W_{t})$ of these bridge versions and
 the original Wiener process.
Note that \ $(W_{t}^{\rm br},W_{t})_{t\in[0,T]}$ \ is a two-dimensional Gauss process and the
 correlation coefficient of the two coordinates is given in the next proposition.

\begin{prop}\label{correlation}
For all $t\in(0,T)$, we have
$$
 \varrho(W_{t}^{\rm av},W_{t})=\varrho(W_{t}^{\rm st},W_{t})=\sqrt{\frac{T-t}{T}}
   \quad\text{ and }\quad
 \varrho(W_{t}^{\rm ir},W_{t})=\frac{\sqrt{T(T-t)}}{t}\,\log\frac{T}{T-t}.
 $$
\end{prop}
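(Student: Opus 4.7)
The correlation $\varrho(W_t^{\rm br},W_t)=\Cov(W_t^{\rm br},W_t)/\sqrt{\Var(W_t^{\rm br})\Var(W_t)}$ has a denominator common to all three bridge versions, so my plan is to compute it once and then handle the three covariances separately. By \eqref{bridgecov} evaluated at $s=t$ we have $\Var(W_t^{\rm br})=t(T-t)/T$, and of course $\Var(W_t)=t$, so
\[
\sqrt{\Var(W_t^{\rm br})\Var(W_t)}=t\sqrt{\tfrac{T-t}{T}}\,,\qquad t\in(0,T)\,.
\]
Since the deterministic mean functions do not affect covariances, the drift $a+(b-a)t/T$ can be dropped throughout, and $a=0$ has already been fixed.

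For the anticipative version, I would expand directly using bilinearity:
\[
\Cov(W_t^{\rm av},W_t)=\Cov\bigl(W_t-\tfrac{t}{T}W_T,W_t\bigr)=t-\tfrac{t}{T}\cdot t=\tfrac{t(T-t)}{T}\,,
\]
which upon dividing by $t\sqrt{(T-t)/T}$ yields $\sqrt{(T-t)/T}$. For the space-time transform, note that for $t\in(0,T)$ the time $tT/(T-t)$ is at least $t$, so $\Cov\bigl(W_{tT/(T-t)},W_t\bigr)=t$; hence
\[
\Cov(W_t^{\rm st},W_t)=\tfrac{T-t}{T}\,\Cov\bigl(W_{\frac{tT}{T-t}},W_t\bigr)=\tfrac{t(T-t)}{T}\,,
\]
giving the same correlation $\sqrt{(T-t)/T}$ as in the anticipative case.

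For the integral representation, I would use the It\^o isometry (after writing $W_t=\int_0^t\dd W_s$):
\[
\Cov(W_t^{\rm ir},W_t)=\int_0^t\tfrac{T-t}{T-s}\,\dd s=(T-t)\bigl[\log T-\log(T-t)\bigr]=(T-t)\log\tfrac{T}{T-t}\,.
\]
Dividing by $t\sqrt{(T-t)/T}$ gives $\frac{\sqrt{T(T-t)}}{t}\log\tfrac{T}{T-t}$, as claimed. There is no real obstacle here; the only subtlety worth flagging is to justify dropping the drift term when taking covariances, and, in the space-time case, to verify the monotonicity $t\leq tT/(T-t)$ used to evaluate $\min(t,tT/(T-t))$, both of which are immediate for $t\in(0,T)$.
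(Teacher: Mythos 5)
Your proposal is correct and follows essentially the same route as the paper: compute $\Var(W_t^{\rm br})=t(T-t)/T$ from the covariance function, evaluate the three covariances $\Cov(W_t^{\rm br},W_t)$ by bilinearity and the It\^o isometry, and divide. The only cosmetic difference is that you factor out the common denominator up front and explicitly flag the check $t\le tT/(T-t)$, which the paper handles implicitly via the monotonicity of $t\mapsto tT/(T-t)$.
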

\begin{proof}
By \eqref{bridgecov}, we get for every $0\leq t\leq T$
$$\Var(W_{t}^{\rm br})=\Cov(W_{t}^{\rm br},W_{t}^{\rm br})=t\frac{T-t}{T}.$$
We easily calculate for every $0\leq t<T$
\begin{align*}
\Cov(W_{t}^{\rm av},W_{t}) & =\Cov(W_{t},W_{t})-\frac{t}{T}\,\Cov(W_{T},W_{t})=t-\frac{t^2}{T}=t\frac{T-t}{T},\\
\Cov(W_{t}^{\rm ir},W_{t}) & =\Cov\left(\int_{0}^t\frac{T-t}{T-s}\,\dd W_{s},
           \int_{0}^t 1\,\dd W_{s}\right)\!=\!\int_{0}^t\frac{T-t}{T-s}\,\dd s=(T-t)\log\frac{T}{T-t},
\intertext{and}
\Cov(W_{t}^{\rm st},W_{t}) & =\Cov\left(\frac{T-t}{T}W_{\frac{tT}{T-t}},W_{t}\right)=t\frac{T-t}{T}.
\end{align*}
Thus we get for every \ $0<t<T$,
\begin{align}\label{help1}
 \varrho(W_{t}^{\rm av},W_{t})=\frac{t\frac{T-t}{T}}{\sqrt{\frac{t}{T}\,(T-t)\cdot t}}
   =\sqrt{\frac{T-t}{T}}=\varrho(W_{t}^{\rm st},W_{t})
\end{align}
and
$$\varrho(W_{t}^{\rm ir},W_{t})
   =\frac{(T-t)\log\frac{T}{T-t}}{\sqrt{\frac{t}{T}\,(T-t)\cdot t}}
   =\frac{\sqrt{T(T-t)}}{t}\,\log\frac{T}{T-t}
$$
concluding the proof.
\end{proof}

\begin{remark}
For all \ $T\in(0,\infty)$, \ the function \ $(0,T)\ni t\mapsto \varrho(W_{t}^{\rm br},W_{t})$ \ is
 strictly decreasing. For the anticipative version and space-time transform,
 it is an immediate consequence of \eqref{help1}.
For the integral representation, it is enough to check that
 \begin{align*}
   \frac{\partial}{\partial t}\!\left(\!\frac{\sqrt{T(T-t)}}{t}\,\log\frac{T}{T-t}\right)
     =\frac{-\frac{t\sqrt{T}}{2\sqrt{T-t}}\log\left(\frac{T}{T-t}\right)
            +\frac{t\sqrt{T}}{\sqrt{T-t}}
            -\sqrt{T(T-t)}\log\left(\frac{T}{T-t}\right)}
     {t^2}
     < 0
 \end{align*}
 for all $t\in(0,T)$, which is equivalent to show that
 \[
   h(t):=\left(T-\frac{t}{2}\right)\log\left(\frac{T-t}{T}\right) + t < 0, \qquad t\in(0,T).
 \]
Using that \ $\log(1-x) = -\sum_{k=1}^\infty \frac{x^k}{k}$ \ for all \ $-1<x<1$, \ we get
 \begin{align*}
   h(t) & = -\left(T-\frac{t}{2}\right)\sum_{k=1}^\infty \frac{1}{k}\left(\frac{t}{T}\right)^k
            + t
          = t - \sum_{k=1}^\infty\frac{t^k}{kT^{k-1}}
            + \sum_{k=1}^\infty\frac{t^{k+1}}{2kT^k}  \\
        & = - \sum_{k=1}^\infty\left(\frac{t^{k+1}}{(k+1)T^k} - \frac{t^{k+1}}{2kT^k} \right)
          = - \sum_{k=1}^\infty\left(\frac{1}{k+1} - \frac{1}{2k}\right)\frac{t^{k+1}}{T^k} \\
         & = - \sum_{k=1}^\infty \frac{k-1}{2k(k+1)}\frac{t^{k+1}}{T^k}
           <0,\qquad t\in(0,T).
 \end{align*}

\noindent Note also that
 $\varrho(W_{t}^{\rm br},W_{t})\to1$ as $t\downarrow0$, and $\varrho(W_{t}^{\rm br},W_{t})\to0$
 as $t\uparrow T$.
Hence $W_{t}^{\rm br}$ and $W_{t}$, $t\in(0,T)$, are positively correlated for all bridge versions.
Moreover,
\begin{equation}\label{compare}
 \frac{\sqrt{T(T-t)}}{t}\,\log\frac{T}{T-t}>\sqrt{\frac{T-t}{T}},\qquad t\in(0,T).
\end{equation}
Indeed, \eqref{compare} is equivalent to $-\frac{t}{T}>\log\left(1-\frac{t}{T}\right)$ \ for all \ $t\in(0,T)$,
 which follows by \ $\log(1-x)\leq -x$ \ for all \ $0\leq x<1$.

\noindent Hence the integral representation is more positively correlated to the original process
 than the anticipative version and the space-time transform.
\proofend
\end{remark}

\subsection{Gauss and conditional Gauss distribution of path deviations}
           \label{subsection_gauus_cond_gauss}

First we study the distribution of the path deviation \ $W_t-W_{t}^{\rm br}$, $t\in[0,T)$.

\begin{prop}\label{prop1}
Let \ $(W_{t}^{\rm br})_{t\in[0,T]}$ \ be a Wiener bridge from \ $0$ \ to \ $b$ \
 over the time-interval \ $[0,T]$, \ where \ $b\in\RR$.
Then for all \ $t\in[0,T)$, \ the path deviation $W_{t}-W_{t}^{\rm br}$ is
 normally distributed with mean \ $\Exp(W_{t}-W_{t}^{\rm br})=-b\frac{t}{T}$ \ and with variance
 \begin{align*}
   \Var(W_{t}-W_{t}^{\rm av})& = \Var(W_{t}-W_{t}^{\rm st}) = \frac{t^2}{T},\\
   \Var(W_{t}-W_{t}^{\rm ir})& = t\left(1+\frac{T-t}{T}\right)+2(T-t)\log\frac{T-t}{T}=:\sigma^2(t).
 \end{align*}
\end{prop}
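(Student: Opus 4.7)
The plan is to read off the three deviations directly from the identities \eqref{devs} (with $a=0$) and exploit the fact that each right-hand side is a deterministic constant plus a Gaussian random variable built from the underlying Wiener process. Thus, once normality is established, only the variance computation remains.

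For the anticipative version, $W_t-W_t^{\rm av}=-b\tfrac{t}{T}+\tfrac{t}{T}W_T$, so normality is immediate and $\Var=\tfrac{t^2}{T^2}\Var(W_T)=\tfrac{t^2}{T}$. For the space-time transform, $W_t-W_t^{\rm st}=-b\tfrac{t}{T}+W_t-\tfrac{T-t}{T}W_{tT/(T-t)}$ is a linear combination of values of $W$ at two time points, hence Gaussian; since $\tfrac{tT}{T-t}\geq t$ for $t\in[0,T)$, the covariance $\Cov(W_t,W_{tT/(T-t)})=t$, and a short calculation
\[
\Var\Bigl(W_t-\tfrac{T-t}{T}W_{tT/(T-t)}\Bigr)
=t-2\tfrac{T-t}{T}\cdot t+\tfrac{(T-t)^2}{T^2}\cdot\tfrac{tT}{T-t}
=t-\tfrac{t(T-t)}{T}=\tfrac{t^2}{T}
\]
recovers the claimed value.

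The integral representation requires slightly more work. Here $W_t-W_t^{\rm ir}=-b\tfrac{t}{T}+\int_0^t\tfrac{t-s}{T-s}\,\dd W_s$ is Gaussian because the integrand is deterministic and square-integrable on $[0,t]$ for every $t\in[0,T)$. Applying the It\^o isometry yields
\[
\Var\bigl(W_t-W_t^{\rm ir}\bigr)=\int_0^t\Bigl(\tfrac{t-s}{T-s}\Bigr)^2\dd s,
\]
and I would evaluate this by writing $\tfrac{t-s}{T-s}=1-\tfrac{T-t}{T-s}$ (or equivalently by substituting $u=T-s$) so that the integral decomposes into three elementary pieces producing a linear term, a logarithmic term, and a rational term that combines to $t+\tfrac{t(T-t)}{T}-2(T-t)\log\tfrac{T}{T-t}$. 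Rewriting $-\log\tfrac{T}{T-t}=\log\tfrac{T-t}{T}$ gives $\sigma^2(t)$.

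The only step that requires any care is this algebraic simplification for $\sigma^2(t)$: one must keep track of the sign conventions in the logarithm and verify that the three antiderivatives indeed combine into the stated closed form. All other steps are routine consequences of the fact that linear functionals and Wiener integrals of deterministic kernels applied to a Gaussian process yield Gaussian variables whose second moments are computed via the covariance function \eqref{bridgecov} and the It\^o isometry.
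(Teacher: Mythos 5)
Your proposal is correct and follows essentially the same route as the paper: read the deviations off from \eqref{devs} with $a=0$, note each is a constant plus a Gaussian functional of $W$, and compute the variances via the It\^o isometry (for $W^{\rm ir}$, using the same decomposition $\frac{t-s}{T-s}=1-\frac{T-t}{T-s}$) and via the covariance structure of $W$ (for $W^{\rm st}$, where the paper splits into independent increments while you use $\Cov(W_s,W_u)=\min(s,u)$ directly — an immaterial difference). All computations check out.
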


\begin{proof}
With $a=0$, by \eqref{devs}, for every $0\leq t<T$ the path deviation $W_{t}-W_{t}^{\rm br}$ is
 normally distributed with  mean $\Exp(W_{t}-W_{t}^{\rm br})=-b\frac{t}{T}$ and with variance
\begin{align*}
 \Var(W_{t}-W_{t}^{\rm av})& =\Var\left(\frac{t}{T}W_T\right)=\frac{t^2}{T},\\
 \Var(W_{t}-W_{t}^{\rm ir})&
  =\Var\left(\int_0^t\frac{t-s}{T-s}\,\dd W_s\right)=\int_0^t\left(\frac{t-s}{T-s}\right)^2\,\dd s\\
 & =\int_0^t\left(1-\frac{T-t}{T-s}\right)^2\,\dd s
  =\int_0^t1-2\frac{T-t}{T-s}+\left(\frac{T-t}{T-s}\right)^2\,\dd s\\
 & =t+2(T-t)\log\frac{T-t}{T}+(T-t)^2\left(\frac1{T-t}-\frac1T\right)
   =\sigma^2(t),
  \intertext{and} \Var(W_{t}-W_{t}^{\rm st})
 & =\Var\left(W_t-\frac{T-t}{T}W_{\frac{tT}{T-t}}\right)
   =\Var\left(-\frac{T-t}{T}(W_{\frac{tT}{T-t}}-W_t)+\frac{t}{T}W_t\right)\\
 & =\left(\frac{T-t}{T}\right)^2\left(\frac{tT}{T-t}-t\right)+\frac{t^3}{T^2}
 =\frac{(T-t)t^2+t^3}{T^2}=\frac{t^2}{T}
\end{align*}
concluding the proof.
\end{proof}

By Proposition \ref{prop1}, for every $0<t<T$, the variance of the path deviation of the
 integral representation from the original Wiener process is smaller
 than those of the anticipative version or the space-time transform, since we have
 $\sigma^2(t)=2t-\frac{t^2}{T}+2(T-t)\log(1-\frac{t}{T})$ and thus
 \begin{equation}\label{compare_2}
  \sigma^2(t)<\frac{t^2}{T}, \qquad t\in(0,T).
 \end{equation}
Indeed, \eqref{compare_2} is equivalent to $-\frac{t}{T}>\log\left(1-\frac{t}{T}\right)$ \ for all \  $t\in(0,T)$, \
 which holds, since \ $\log(1-x)\leq -x$ \ for all \ $0\leq x<1$.

Next we examine the conditional distribution of the path deviation \ $W_t-W_{t}^{\rm br}$ \
 given the endpoint \ $W_T$.

\begin{prop}\label{prop2}
Let \ $(W_{t}^{\rm br})_{t\in[0,T]}$ \ be a Wiener bridge from \ $0$ \ to \ $b$ \
 over the time-interval \ $[0,T]$, \ where \ $b\in\RR$.
Then for all \ $t\in[0,T)$ \ and \ $d\in\RR$, \
 the conditional distribution of the path deviation
 \ $W_t - W_{t}^{\rm br}$ \ given \ $W_T=d$ \ is normal with mean
 \begin{align}\label{help_cond_gauss1}
   & \Exp(W_{t}-W_{t}^{\rm av}\mid W_{T}=d) = (d-b)\frac{t}{T},\\ \label{help_cond_gauss2}
   & \Exp(W_{t}-W_{t}^{\rm ir}\mid W_{T}=d)
       = (d-b)\frac{t}{T}+d\frac{T-t}{T}\log\frac{T-t}{T},\\ \label{help_cond_gauss3}
   & \Exp(W_{t}-W_{t}^{\rm st}\mid W_{T}=d) =-b\frac{t}{T}+\frac{d}{T}(2t-T)\cdot 1_{[\frac{T}{2},T)}(t),
 \end{align}
 and with variance
 \begin{align} \label{help_cond_gauss4}
   & \Var(W_{t}-W_{t}^{\rm av}\mid W_{T}=d) = 0,\\\label{help_cond_gauss5}
   & \Var(W_{t}-W_{t}^{\rm ir}\mid W_{T}=d)
      = 2t\frac{T-t}{T}+2\frac{(T-t)^2}{T}\log\frac{T-t}{T}\\\nonumber
   &\phantom{\Var(W_{t}-W_{t}^{\rm ir}\mid W_{T}=d)=\;}
         - \frac{(T-t)^2}{T}\left(\log\frac{T-t}{T}\right)^2, \\\label{help_cond_gauss6}
  & \Var(W_{t}-W_{t}^{\rm st}\mid W_{T}=d) =\frac{t^2}{T}-\frac{(2t-T)^2}{T}\cdot 1_{[\frac{T}{2},T)}(t).
 \end{align}
\end{prop}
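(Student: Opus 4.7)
The plan is to exploit the fact that each path deviation $W_t - W_t^{\rm br}$ together with $W_T$ forms a two-dimensional centred (up to deterministic drift) Gauss vector; hence the standard formulas
\[
  \Exp(X \mid Y=y) = \Exp(X) + \frac{\Cov(X,Y)}{\Var(Y)}\bigl(y - \Exp(Y)\bigr),
  \qquad
  \Var(X \mid Y=y) = \Var(X) - \frac{\Cov(X,Y)^2}{\Var(Y)}
\]
apply with $X = W_t - W_t^{\rm br}$ and $Y = W_T$. The unconditional mean and variance of $X$ are already supplied by Proposition \ref{prop1}, and $\Var(W_T) = T$, $\Exp(W_T) = 0$, so the entire computation reduces to determining the covariance $\Cov(W_t - W_t^{\rm br}, W_T)$ in each of the three cases.

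For the anticipative version, \eqref{devs} gives $W_t - W_t^{\rm av} = -b\frac{t}{T} + \frac{t}{T} W_T$, which is a deterministic affine function of $W_T$. Thus conditioning on $W_T = d$ yields the value $(d-b)\frac{t}{T}$ with zero conditional variance, establishing \eqref{help_cond_gauss1} and \eqref{help_cond_gauss4}. For the integral representation I would write $W_T = \int_0^T 1\, \dd W_s$ and use the Itô isometry to obtain
\[
  \Cov(W_t - W_t^{\rm ir},\, W_T) = \int_0^t \frac{t-s}{T-s}\,\dd s = t + (T-t)\log\frac{T-t}{T},
\]
after the substitution $u = T-s$. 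Plugging this into the conditional-Gauss formulas, together with the already-computed $\sigma^2(t)$ from Proposition \ref{prop1}, yields \eqref{help_cond_gauss2} and \eqref{help_cond_gauss5} after routine algebraic simplification (expanding the square and collecting terms in $\log\frac{T-t}{T}$).

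For the space-time transform I expect the main subtlety: the covariance $\Cov(W_{tT/(T-t)}, W_T) = \min\bigl(\frac{tT}{T-t}, T\bigr)$ depends on whether the time argument $\frac{tT}{T-t}$ has already passed $T$, i.e.\ on the sign of $t - T/2$. I would therefore split into two cases. For $t \in [0, T/2)$ a direct calculation gives $\Cov(W_t - W_t^{\rm st}, W_T) = t - \frac{T-t}{T}\cdot\frac{tT}{T-t} = 0$, so conditioning on $W_T$ leaves the law of $W_t - W_t^{\rm st}$ unchanged and we simply inherit Proposition \ref{prop1}. For $t \in [T/2, T)$ the covariance becomes $t - \frac{T-t}{T}\cdot T = 2t - T$, which contributes the $\frac{d}{T}(2t-T)$ term to the mean and the subtracted $\frac{(2t-T)^2}{T}$ term to the variance; the two cases are then merged into the single indicator form of \eqref{help_cond_gauss3} and \eqref{help_cond_gauss6} by observing that $2t - T$ vanishes at $t = T/2$, making the formulas continuous across the split. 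The only genuine obstacle is spotting this case distinction; once the covariance is correctly identified, everything else is bookkeeping.
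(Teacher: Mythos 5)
Your proposal is correct and follows essentially the same route as the paper: the conditional-Gauss mean/variance formulas applied to the pair $(W_t - W_t^{\rm br}, W_T)$, with the computation reduced to the covariance $\Cov(W_t - W_t^{\rm br}, W_T)$ in each case, including the $\min\bigl(\tfrac{tT}{T-t},T\bigr)$ case split at $t=T/2$ for the space-time transform. The covariance values you identify ($t$, \ $t+(T-t)\log\tfrac{T-t}{T}$, and $0$ or $2t-T$) all match the paper's.
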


\begin{proof}
For all \ $0\leq t<T$, the joint distribution $(W_{t}-W_{t}^{\rm br},W_{T})$ of the path
 deviation and the endpoint is a two-dimensional normal distribution and, by Theorem 2 and
 Problem 5 in Chapter II, \S13 of Shiryaev \cite{Shi}, it is known that the conditional
 distribution of $W_{t}-W_{t}^{\rm br}$ given $W_{T}=d$ \ is normal with mean
\begin{equation}\label{condexp}
\Exp\left(W_{t}-W_{t}^{\rm br}\right)
   +\frac{d-\Exp\left(W_{T}\right)}{\Var\left(W_{T}\right)}\,\Cov\left(W_{t}-W_{t}^{\rm br},W_{T}\right)
\end{equation}
and with variance
\begin{equation}\label{condvar}
\Var\left(W_{t}-W_{t}^{\rm br}\right)
    -\frac{\left(\Cov\left(W_{t}-W_{t}^{\rm br},W_{T}\right)\right)^2}{\Var\left(W_{T}\right)}\,.
\end{equation}
Here we have
 $$
  \Cov(W_{t}-W_{t}^{\rm av},W_{T})=\Cov\left(\frac{t}{T}W_{T},W_{T}\right)=t,\qquad t\in[0,T),
 $$
 and thus \eqref{condexp}, \eqref{condvar} and Proposition \ref{prop1} yield that
\begin{align*}
 &\Exp(W_{t}-W_{t}^{\rm av}\mid W_{T}=d)  =-b\frac{t}{T}+\frac{d}{T}t=(d-b)\frac{t}{T},\\
 &\Var(W_{t}-W_{t}^{\rm av}\mid W_{T}=d)  =\frac{t^2}{T}-\frac{t^2}{T}=0.
\end{align*}
We note that the above formulae follow immediately, since in case of \ $W_T=d$, \ we have
 \ $W_{t}-W_{t}^{\rm av}=(d-b)\frac{t}{T}$.
Further, we have
\begin{align*}
\Cov(W_{t}-W_{t}^{\rm ir},W_{T})
 &  =\Cov\left(\int_{0}^t\frac{t-s}{T-s}\,\dd W_{s},\int_{0}^T 1\,\dd W_{s}\right)
   =\int_{0}^t\frac{t-s}{T-s}\,\dd s\\
 & =\int_{0}^t\left(1-\frac{T-t}{T-s}\right)\,\dd s=t+(T-t)\log\frac{T-t}{T},
\end{align*}
 and thus \eqref{condexp}, \eqref{condvar} and Proposition \ref{prop1} yield that
\begin{align*}
 \Exp(W_{t}-W_{t}^{\rm ir}\mid W_{T}=d) & = -b\frac{t}{T}+\frac{d}{T}\left(t+(T-t)\log\frac{T-t}{T}\right),\\
 \Var(W_{t}-W_{t}^{\rm ir}\mid W_{T}=d) & = \sigma^2(t)-\frac{\left(t+(T-t)\log\frac{T-t}{T}\right)^2}T\\
 & =2t-\frac{t^2}{T}+2(T-t)\log\frac{T-t}{T}\\
 &\phantom{=}-\frac{t^2}{T}-2(T-t)\frac{t}{T}\log\frac{T-t}{T}
            -\frac{(T-t)^2}{T}\left(\log\frac{T-t}{T}\right)^2.
\end{align*}
This implies \eqref{help_cond_gauss2} and \eqref{help_cond_gauss5}.
Finally, we have
\begin{align*}
\Cov(W_{t}-W_{t}^{\rm st},W_{T}) & =\Cov(W_{t},W_{T})-\frac{T-t}{T}\Cov\left(W_{\frac{tT}{T-t}},W_{T}\right)\\
& =t-\frac{T-t}{T}\min\left(\frac{tT}{T-t},T\right)\\
& =\begin{cases}
     t-\frac{T-t}{T}\frac{tT}{T-t}=0
          & \text{ if }0\leq t\leq\frac{T}{2},\\
     t-\frac{T-t}{T}T=2t-T & \text{ if }\frac{T}{2}\leq t<T,
    \end{cases}
\end{align*}
 and thus \eqref{condexp}, \eqref{condvar} and Proposition \ref{prop1} yield
 \eqref{help_cond_gauss3} and \eqref{help_cond_gauss6}.
\end{proof}

\subsection{Comparison of tail functions}

The tail of a normally distributed random variable $Y_{\mu,\sigma^2}$ with mean $\mu\in\RR$ and
 with variance $\sigma^2>0$ has the form
\begin{align*}
 T_{\mu,\sigma^2}(x)& = \PP\{|Y_{\mu,\sigma^2}|>x\}=1-\PP \{-x\leq Y_{\mu,\sigma^2}\leq x\}\\
  & =1-\PP\left\{\frac{-x-\mu}{\sigma}\leq \frac{Y_{\mu,\sigma^2}-\mu}{\sigma}
   \leq\frac{x-\mu}{\sigma}\right\}=1-\Phi\left(\frac{x-\mu}{\sigma}\right)
            +\Phi\left(\!-\frac{x+\mu}{\sigma}\right)\\
  & =1-\Phi\left(\frac{\mu+x}{\sigma}\right)+\Phi\left(\frac{\mu-x}{\sigma}\right),
     \qquad x>0,
\end{align*}
where $\Phi$ denotes the standard normal distribution function.
Since, by Proposition \ref{prop1}, \ $\Exp(W_t-W_{t}^{\rm br})=-b\frac{t}{T}$, $t\in[0,T)$,
 \ if we want to use the monotonicity in \eqref{compare_2} to show different behavior of
 the tails of the deviations $W_{t}-W_{t}^{\rm br}$, then this tail function should be
 an increasing function in $\sigma>0$ for every fixed $x>0$ and fixed
 $\mu:=-b\frac{t}{T}\in\mathbb R$. We have
\begin{align*}
\frac{\partial}{\partial\sigma}T_{\mu,\sigma^2}(x)& =\frac{x+\mu}{\sigma^2}\Phi'\left(\frac{\mu+x}{\sigma}\right)+\frac{x-\mu}{\sigma^2}\Phi'\left(\frac{\mu-x}{\sigma}\right)\\
& =\frac{x+\mu}{\sigma^2\sqrt{2\pi}}\exp\left(-\frac12\left(\frac{x+\mu}{\sigma}\right)^2\right)+\frac{x-\mu}{\sigma^2\sqrt{2\pi}}\exp\left(-\frac12\left(\frac{x-\mu}{\sigma}\right)^2\right)\\
& =\sqrt{\frac2\pi}\,\frac1{\sigma}\exp\left(-\frac12\frac{x^2+\mu^2}{\sigma^2}\right)\,\left(\frac{x}{\sigma}\cosh\left(\frac{x\mu}{\sigma^2}\right)-\frac{\mu}{\sigma}\sinh\left(\frac{x\mu}{\sigma^2}\right)\right).
\end{align*}
In case $0<x<|\mu|$, $\mu\ne 0$, we have
$$
  \frac{x}{\sigma}\cosh\left(\frac{x\mu}{\sigma^2}\right)
     -\frac{\mu}{\sigma}\sinh\left(\frac{x\mu}{\sigma^2}\right)
  =\frac{x-\mu}{2\sigma}\exp\left(\frac{x\mu}{\sigma^2}\right)
   +\frac{x+\mu}{2\sigma}\exp\left(-\frac{x\mu}{\sigma^2}\right)
  \to-\infty,
$$
as $\sigma\downarrow0$.
This shows that in general the tail function $T_{\mu,\sigma^2}(x)$ is not increasing in $\sigma>0$ and
 thus is in general not helpful to analyze the different behavior of path deviations.

In special situations such as $b=0=\mu$ it is evident that
 $(0,\infty)\ni\sigma\mapsto T_{0,\sigma^2}(x)$ is strictly
 increasing for every $x>0$. In this special case it follows immediately from the formula
 $\Exp(|Y_{0,\sigma^2}|^p)=\int_{0}^\infty x^{p-1}T_{0,\sigma^2}(x)\,\dd x$, $p\geq 1$, and \eqref{compare_2}
 that for every $p\geq1$ and $0<t<T$ we have
$$
  \Exp\left(|W_{t}-W_{t}^{\rm ir}|^p\right)<\Exp\left(|W_{t}-W_{t}^{\rm av}|^p\right)
    =\Exp\left(|W_{t}-W_{t}^{\rm st}|^p\right).
$$
As a further consequence we get for every $p\geq1$
$$\Exp\left(\int_{0}^T|W_{t}-W_{t}^{\rm ir}|^p\,\dd t\right)
  <\Exp\left(\int_{0}^T|W_{t}-W_{t}^{\rm av}|^p\,\dd t\right)
     =\Exp\left(\int_{0}^T|W_{t}-W_{t}^{\rm st}|^p\,\dd t\right).
$$
We will now show for $p=1$ and $p=2$ that these relations are also true in the general case with $b\not=0$,
 see Subsection \ref{subsection_exp_abs_path_dev}.
In addition, we will get explicit expressions for the expected (conditional) path
 deviations in the case $p=2$.
The reason for not considering a general \ $p\in\NN$ \ is that we just want to demonstrate
 the phenomenon that the bridge versions have different sample path behavior.
We also note that the calculations for a general \ $p\in\NN$ \ would be more complicated.

\subsection{Expected absolute, quadratic and conditional quadratic
            path deviations}\label{subsection_exp_abs_path_dev}

First we study the $L^1$-norm \ $\Exp(|W_{t}-W_{t}^{\rm br}|)$ \ of the path deviations
 \ $W_t - W_{t}^{\rm br}$.

\begin{lemma}\label{prop3}
Let \ $(W_{t}^{\rm br})_{t\in[0,T]}$ \ be a Wiener bridge from \ $0$ \ to \ $b$ \
 over the time-interval \ $[0,T]$, \ where \ $b\in\RR$.
Then for all \ $t\in(0,T)$,
 \begin{align*}
    \Exp\left(|W_t-W_{t}^{\rm av}|\right) = \Exp\left(|W_t-W_{t}^{\rm st}|\right)
       > \Exp\left(|W_t-W_{t}^{\rm ir}|\right).
 \end{align*}
\end{lemma}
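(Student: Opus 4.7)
The equality \ $\Exp(|W_t - W_t^{\rm av}|) = \Exp(|W_t - W_t^{\rm st}|)$ \ is immediate from Proposition \ref{prop1}: both path deviations are \ $N(-bt/T,\,t^2/T)$-distributed, so their marginals coincide and hence so do their absolute first moments. The real content of the lemma is therefore the strict inequality against \ $\Exp(|W_t - W_t^{\rm ir}|)$.

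For this I would use a Gaussian-convolution coupling based on \eqref{compare_2}. Since \ $\sigma^2(t) < t^2/T$ \ on \ $(0,T)$, \ the quantity \ $\tau := \sqrt{t^2/T - \sigma^2(t)}$ \ is strictly positive. Pick \ $Y \sim N(-bt/T,\sigma^2(t))$ \ together with an independent \ $Z \sim N(0,1)$. \ Then \ $Y + \tau Z \sim N(-bt/T,\,t^2/T)$, \ so that Proposition \ref{prop1} gives \ $Y \eqd W_t - W_t^{\rm ir}$ \ and \ $Y + \tau Z \eqd W_t - W_t^{\rm av}$. \ In particular,
\[
  \Exp(|W_t - W_t^{\rm av}|) = \Exp(|Y + \tau Z|), \qquad
  \Exp(|W_t - W_t^{\rm ir}|) = \Exp(|Y|),
\]
and it suffices to prove \ $\Exp(|Y + \tau Z|) > \Exp(|Y|)$.

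Conditioning on \ $Y$ \ and applying Jensen's inequality to the convex function \ $|\cdot|$ \ yields \ $\Exp(|Y + \tau Z| \mid Y) \geq |Y|$ \ almost surely. The step I expect to be the main obstacle is the strictness of this inequality: for every fixed \ $y \in \RR$, \ the law of \ $y + \tau Z$ \ charges all of \ $\RR$ \ while \ $|\cdot|$ \ is not affine there, so Jensen's inequality is strict and \ $\Exp(|y + \tau Z|) > |y|$ \ for every \ $y$. \ Integrating against the law of \ $Y$ \ closes the argument.

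A quicker back-up route that avoids invoking strict Jensen is to work directly from the closed-form expression \ $F(\sigma) := \Exp|N(\mu,\sigma^2)| = \sigma\sqrt{2/\pi}\,\ee^{-\mu^2/(2\sigma^2)} + \mu\bigl(1 - 2\Phi(-\mu/\sigma)\bigr)$: \ a brief differentiation shows that the contributions from the exponential cancel against those produced by the second summand, leaving \ $F'(\sigma) = 2\phi(\mu/\sigma) > 0$. \ Hence \ $F$ \ is strictly increasing in \ $\sigma > 0$, \ and applying this with \ $\mu = -bt/T$ \ together with \eqref{compare_2} again delivers the strict inequality.
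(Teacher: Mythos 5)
Your proposal is correct. The equality of the first two expectations is indeed immediate from Proposition \ref{prop1}, and your back-up route is essentially verbatim the paper's own proof: the authors derive the closed form $\Exp(|Y_{\mu,\sigma^2}|)=2\sigma\Phi'(\mu/\sigma)+\mu(2\Phi(\mu/\sigma)-1)$ and differentiate, using $\Phi''(x)=-x\Phi'(x)$ to get $\tfrac{\partial}{\partial\sigma}\Exp(|Y_{\mu,\sigma^2}|)=2\Phi'(\mu/\sigma)>0$, then invoke \eqref{compare_2}. Your primary route is genuinely different and works: writing $W_t-W_t^{\rm av}\eqd Y+\tau Z$ with $Y\eqd W_t-W_t^{\rm ir}$ independent of $Z\sim N(0,1)$ and $\tau^2=t^2/T-\sigma^2(t)>0$ is a valid coupling (independent Gaussian means and variances add), and the strictness of Jensen for each fixed $y$ is sound — $y+\tau Z$ has full support and $|\cdot|$ is affine on no neighbourhood of the origin, or more simply $\Exp|y+\tau Z|=|\Exp(y+\tau Z)|$ would force $y+\tau Z$ to have almost surely constant sign, which a nondegenerate Gaussian never does. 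What the coupling buys is generality: it shows $\Exp f(Y+\tau Z)\ge\Exp f(Y)$ for every convex $f$, strictly for non-affine $f$, so with $f(x)=|x|^p$ it yields $\Exp(|W_t-W_t^{\rm ir}|^p)<\Exp(|W_t-W_t^{\rm av}|^p)$ for all $p\ge1$ and all $b\in\RR$ in one stroke — strictly more than the paper proves, since the paper obtains the general-$p$ statement only for $b=0$ (via the tail-function identity) and handles $b\neq0$ separately for $p=1$ and $p=2$. What the paper's route buys in exchange is the explicit value of $\Exp(|W_t-W_t^{\rm br}|)$ in terms of $\Phi$ and $\Phi'$, which it displays at the end of the proof; your coupling gives only the comparison, not the formula.
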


\begin{proof}
For a normally distributed random variable $Y_{\mu,\sigma^2}$ with mean $\mu$ and
 with variance $\sigma^2>0$ we have
\begin{align*}
\Exp(|Y_{\mu,\sigma^2}|)
 & =\int_{-\infty}^\infty |x|\,\frac{1}{\sqrt{2\pi\sigma^2}}
     \exp\left(-\frac12\frac{(x-\mu)^2}{\sigma^2}\right)\,\dd x\\
& =\left(\int_0^\infty-\int_{-\infty}^0\right) x\,\frac{1}{\sqrt{2\pi\sigma^2}}
      \exp\left(-\frac12\frac{(x-\mu)^2}{\sigma^2}\right)\,\dd x\\
& =\left(\int_0^\infty-\int_{-\infty}^0\right)
   \frac{x-\mu}{\sigma}\,\frac{1}{\sqrt{2\pi}}\exp\left(-\frac12\frac{(x-\mu)^2}{\sigma^2}\right)\,\dd x\\
& \phantom{=}+\mu\left(\int_0^\infty-\int_{-\infty}^0\right)
 \frac{1}{\sqrt{2\pi\sigma^2}}\exp\left(-\frac12\frac{(x-\mu)^2}{\sigma^2}\right)\,\dd x.
\end{align*}
By change of variables $y=\frac12(\frac{x-\mu}{\sigma})^2$ and $z=\frac{x-\mu}{\sigma}$, we get
\begin{align}\nonumber
 \Exp(|Y_{\mu,\sigma^2}|) & =2\sigma\int_{\frac12(\frac{\mu}{\sigma})^2}^\infty
  \frac{1}{\sqrt{2\pi}}\exp\left(-y\right)\,\dd y
    +\mu\left(\int_{-\frac{\mu}{\sigma}}^\infty-\int_{-\infty}^{-\frac{\mu}{\sigma}}\right)
    \frac{1}{\sqrt{2\pi}}\exp\left(-\frac12z^2\right)\,\dd z\\ \label{help_exp_abs_path_dev2}
 & = 2\sigma\,\frac{1}{\sqrt{2\pi}}\exp\left(-\frac{\mu^2}{2\sigma^2}\right)
      +\mu\left(1-2\Phi\left(-\frac{\mu}{\sigma}\right)\right)\\\nonumber
 & = 2\sigma\,\Phi'\left(\frac{\mu}{\sigma}\right)+\mu\left(2\Phi\left(\frac{\mu}{\sigma}\right)-1\right).
\end{align}
Differentiation with respect to $\sigma>0$, using $\Phi''(x)=-x\Phi'(x)$ yields
\begin{align*}
 \frac{\partial}{\partial\sigma}\Exp(|Y_{\mu,\sigma^2}|) & =2\Phi'\left(\frac{\mu}{\sigma}\right)
   -2\frac{\mu}{\sigma}\Phi''\left(\frac{\mu}{\sigma}\right)
   -2\frac{\mu^2}{\sigma^2}\Phi'\left(\frac{\mu}{\sigma}\right)=2\Phi'\left(\frac{\mu}{\sigma}\right)>0.
\end{align*}
Hence $\Exp(|Y_{\mu,\sigma^2}|)$ is a strictly increasing function in $\sigma>0$ from which,
 by Subsection \ref{subsection_gauus_cond_gauss} together with \eqref{compare_2}, we get for all $0<t<T$,
\begin{align*}
\Exp\left(|W_t-W_{t}^{\rm av}|\right)
  & =\Exp\left(|W_t-W_{t}^{\rm st}|\right)\\
  & =\frac{2t}{\sqrt{T}}\Phi'\left(\frac{b}{\sqrt{T}}\right)
     + b\frac{t}{T}\left(2\Phi\left(\frac{b}{\sqrt{T}}\right)-1\right)\\
  & > 2\sigma(t)\Phi'\left(\frac{bt}{T\sigma(t)}\right)
      + b\frac{t}{T}\left(2\Phi\left(\frac{bt}{T\sigma(t)}\right)-1\right)
    = \Exp\left(|W_t-W_{t}^{\rm ir}|\right)
\end{align*}
concluding the proof.
\end{proof}

Next we compare expected absolute path deviations
 \ $\Exp\left(\int_0^T|W_t-W_{t}^{\rm br}|\,\dd t\right)$.
\ Using that integration over the time-interval \ $[0,T]$ \ and taking expectation can be interchanged
 (as it is explained in the introduction), by Lemma \ref{prop3}, we also get
 \begin{align*}
   \Exp\left(\int_0^T|W_t-W_{t}^{\rm av}|\,\dd t\right)
       =\Exp\left(\int_0^T|W_t-W_{t}^{\rm st}|\,\dd t\right)
    >\Exp\left(\int_0^T|W_t-W_{t}^{\rm ir}|\,\dd t\right).
 \end{align*}

Using \eqref{help_exp_abs_path_dev2} and Proposition \ref{prop2},
 it might also be possible to calculate and to compare expected conditional absolute path deviations given
 $W_T=d$.
This task is more complicated, since now the mean is different for different versions of the bridge,
 see Proposition \ref{prop2}.
Instead we will now consider expected (conditional) quadratic path deviations which
 have much nicer forms.

Next we calculate the second moments \ $\Exp\left((W_t-W_{t}^{\rm br})^2\right)$ \
 of the path deviations \ $W_t-W_{t}^{\rm br}$, \ and also expected quadratic path deviations
 \ $\Exp\left(\int_0^T(W_t-W_{t}^{\rm br})^2\,\dd t\right)$.

\begin{thm}\label{prop4}
Let \ $(W_{t}^{\rm br})_{t\in[0,T]}$ \ be a Wiener bridge from \ $0$ \ to \ $b$ \
 over the time-interval \ $[0,T]$, \ where \ $b\in\RR$.
Then for all \ $t\in[0,T)$,
 \begin{align}\label{help_exp_quad_path_dev1}
  &\Exp\left((W_t-W_{t}^{\rm av})^2\right)=\Exp\left((W_t-W_{t}^{\rm st})^2\right)
     =\frac{t^2}{T}+b^2\frac{t^2}{T^2}\\ \label{help_exp_quad_path_dev2}
  & \Exp\left((W_t-W_{t}^{\rm ir})^2\right) = \sigma^2(t)+b^2\frac{t^2}{T^2},
 \end{align}
 where \ $\sigma^2(t)$ \ is defined in Proposition \ref{prop1}.
Moreover, the expected quadratic path deviations take the following forms:
 \begin{align*}
   & \Exp\left(\int_0^T(W_t-W_{t}^{\rm av})^2\,\dd t\right)
      = \Exp\left(\int_0^T(W_t-W_{t}^{\rm st})^2\,\dd t\right)
      = \frac{T}{3}(T+b^2),\\
   & \Exp\left(\int_0^T(W_t-W_{t}^{\rm ir})^2\,\dd t\right)
      = \frac{T}{3}\left(\frac{T}{2} + b^2\right).
 \end{align*}
\end{thm}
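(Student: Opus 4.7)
The plan is to derive the pointwise formulas \eqref{help_exp_quad_path_dev1}--\eqref{help_exp_quad_path_dev2} directly from Proposition \ref{prop1} via the decomposition
\[
 \Exp\bigl((W_t-W_{t}^{\rm br})^2\bigr)
   = \Var(W_t-W_{t}^{\rm br})+\bigl(\Exp(W_t-W_{t}^{\rm br})\bigr)^2,
\]
and then to obtain the expected quadratic path deviations by integrating these expressions over $[0,T]$, interchanging integral and expectation as justified in the introduction.

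For the pointwise step, Proposition \ref{prop1} gives $\Exp(W_t-W_{t}^{\rm br})=-b\,t/T$ for every version of the bridge, so the bias-squared contribution is uniformly $b^2 t^2/T^2$. The variance contribution is $t^2/T$ for the anticipative and space-time transform versions and $\sigma^2(t)$ for the integral representation. Adding these yields \eqref{help_exp_quad_path_dev1} and \eqref{help_exp_quad_path_dev2} at once.

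For the integrated quantities, I would compute the three elementary one-dimensional integrals
\[
 \int_0^T \frac{t^2}{T}\,\dd t = \frac{T^2}{3},\qquad
 \int_0^T b^2\frac{t^2}{T^2}\,\dd t = \frac{b^2 T}{3},
\]
which immediately give the anticipative/space-time formula $\frac{T}{3}(T+b^2)$. For the integral representation the only nontrivial step is
\[
 \int_0^T \sigma^2(t)\,\dd t
   = \int_0^T\!\left(2t-\frac{t^2}{T}\right)\dd t
     + 2\int_0^T (T-t)\log\frac{T-t}{T}\,\dd t.
\]
The substitution $u=T-t$ reduces the remaining logarithmic integral to $2\int_0^T u\log(u/T)\,\dd u$, which I would evaluate by integration by parts; the outcome $-T^2/2$ combines with the first piece $T^2-T^2/3$ to give $\int_0^T \sigma^2(t)\,\dd t = T^2/6$, hence $\frac{T}{3}\bigl(\frac{T}{2}+b^2\bigr)$.

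There is no genuine obstacle here: the only computational care point is signs and bookkeeping in the integration by parts for $\int u\log u\,\dd u$, and making sure the improper behavior of $\log\frac{T-t}{T}$ at $t=T$ is harmless (which it is, since $(T-t)\log\frac{T-t}{T}\to 0$ as $t\uparrow T$). Once those two items are handled cleanly, the whole theorem is a direct corollary of Proposition \ref{prop1}.
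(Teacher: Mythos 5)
Your proposal is correct and follows essentially the same route as the paper: both decompose the second moment as variance plus squared mean via Proposition \ref{prop1}, and both evaluate $\int_0^T\sigma^2(t)\,\dd t$ by a linear substitution reducing the logarithmic term to $\int_0^1 s\log s\,\dd s$ handled by integration by parts. All your numerical values ($-T^2/2$ for the log piece, $T^2/6$ for $\int_0^T\sigma^2(t)\,\dd t$) agree with the paper's computation.
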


\begin{proof}
For a normally distributed random variable $Y_{\mu,\sigma^2}$ with mean $\mu$ and with variance $\sigma^2\geq0$ we clearly have
\begin{equation}\label{quaddist}
\Exp(Y_{\mu,\sigma^2}^2)=\Var(Y_{\mu,\sigma^2})+(\Exp(Y_{\mu,\sigma^2}))^2=\sigma^2+\mu^2.
\end{equation}
Hence, by Proposition \ref{prop1},
 we get \eqref{help_exp_quad_path_dev1} and \eqref{help_exp_quad_path_dev2}.
Then, we have
\begin{align*}
\Exp\left(\int_0^T(W_t-W_{t}^{\rm av})^2\,\dd t\right)
 & =\Exp\left(\int_0^T(W_t-W_{t}^{\rm st})^2\,\dd t\right)\\
 & =\int_0^T\frac{t^2}{T^2}(T+b^2)\,\dd t=\frac{T}{3}(T+b^2),
\end{align*}
and, by a change of variables $s=(T-t)/T$ and partial integration, we get
\begin{align*}
\Exp\left(\int_0^T(W_t-W_{t}^{\rm ir})^2\,\dd t\right)
 & = \int_0^T \left(\sigma^2(t) + b^2 \frac{t^2}{T^2}\right)\,\dd t\\
 & =\int_0^T \left[t\left(2-\frac{t}{T}\right)+2(T-t)\log\frac{T-t}{T}+b^2\frac{t^2}{T^2}\right]\,\dd t\\
 & =T^2-\frac13T^2+2T^2\int_0^1s\log s\,\dd s+\frac13b^2T\\
 & =\frac23T^2-T^2\int_0^1s\,\dd s+\frac13b^2T\\
 & =\frac16T^2+\frac13b^2T=\frac{T}{3}\left(\frac{T}{2}+b^2\right)
\end{align*}
concluding the proof.
\end{proof}

Note that, by Theorem \ref{prop4} and \eqref{compare_2}, for all \ $t\in(0,T)$,
 \[
  \Exp\left((W_t-W_{t}^{\rm av})^2\right)=\Exp\left((W_t-W_{t}^{\rm st})^2\right)
      >\sigma^2(t)+b^2\frac{t^2}{T^2}=\Exp\left((W_t-W_{t}^{\rm ir})^2\right).
 \]
Further, in case $b=0$ this shows that the expected quadratic path deviation of the integral representation
 is half of those of the anticipative version and the space-time transform of the bridge.
This is in accordance with the typical observations from simulation studies as in Figure \ref{Figure1}.

Next we study expected conditional quadratic path deviations.

\begin{thm}\label{prop5}
Let \ $(W_{t}^{\rm br})_{t\in[0,T]}$ \ be a Wiener bridge from \ $0$ \ to \ $b$ \
 over the time-interval \ $[0,T]$, \ where \ $b\in\RR$.
Then for all \ $t\in[0,T)$ \ and \ $d\in\RR$ \ we have
 \begin{align}\label{help_exp_cond_quad1}
   &\Exp\left((W_t-W_{t}^{\rm av})^2\,\big|\,W_T=d\right)=(d-b)^2\frac{t^2}{T^2},\\\label{help_exp_cond_quad2}
   &\Exp\left((W_t-W_{t}^{\rm ir})^2\,\big|\,W_T=d\right)
     = 2t\frac{T-t}{T}+2\frac{(T-t)^2}{T}\log\frac{T-t}{T} \\\nonumber
   & \phantom{\Exp\left((W_t-W_{t}^{\rm ir})^2\,\big|\,W_T=d\right)=}
     -\frac{(T-t)^2}{T}\left(\log\frac{T-t}{T}\right)^2\\\nonumber
   & \phantom{\Exp\left((W_t-W_{t}^{\rm ir})^2\,\big|\,W_T=d\right)=}
     +\left((d-b)\frac{t}{T}+d\frac{T-t}{T}\log\frac{T-t}{T}\right)^2,\\ \label{help_exp_cond_quad3}
   &\Exp\left((W_t-W_{t}^{\rm st})^2\,\big|\,W_T=d\right)
     = \frac{t^2}{T}-\frac{(2t-T)^2}{T} 1_{[\frac{T}{2},T)}(t)\\\nonumber
   &\phantom{\Exp\left((W_t-W_{t}^{\rm st})^2\,\big|\,W_T=d\right) = }
      + \left(b\frac{t}{T}-d\frac{(2t-T)}T 1_{[\frac{T}{2},T)}(t)\right)^2.
 \end{align}
Moreover,
 the expected conditional quadratic path deviations take the following forms:
 \begin{align}\label{help_exp_cond_quad4}
  & \Exp\left(\int_0^T(W_t-W_{t}^{\rm av})^2\,\dd t\,\bigg|\,W_T=d\right)=\frac13(d-b)^2T,\\
  & \Exp\left(\int_0^T(W_t-W_{t}^{\rm ir})^2\,\dd t\,\bigg|\,W_T=d\right)   \label{help_exp_cond_quad5}
     = \frac7{54}(b-d)^2T+\frac{11}{54}b^2T-\frac{7}{54}dbT+\frac1{27}T^2,\\   \label{help_exp_cond_quad6}
 & \Exp\left(\int_0^T(W_t-W_{t}^{\rm st})^2\,\dd t\,\bigg|\,W_T=d\right)
     = \frac16(d-b)^2T+\frac16b^2T-\frac1{12}dbT+\frac16T^2.
 \end{align}
\end{thm}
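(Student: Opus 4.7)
My approach is to treat the pointwise identities \eqref{help_exp_cond_quad1}--\eqref{help_exp_cond_quad3} and the integrated identities \eqref{help_exp_cond_quad4}--\eqref{help_exp_cond_quad6} separately. For the pointwise part I would simply invoke the conditional variance decomposition
\[
 \Exp\bigl((W_t-W_t^{\rm br})^2\mid W_T=d\bigr)
  = \Var(W_t-W_t^{\rm br}\mid W_T=d)
     + \bigl(\Exp(W_t-W_t^{\rm br}\mid W_T=d)\bigr)^2,
\]
which is valid because \ $(W_t-W_t^{\rm br},W_T)$ \ is jointly Gaussian, and then just substitute the explicit conditional means \eqref{help_cond_gauss1}--\eqref{help_cond_gauss3} and variances \eqref{help_cond_gauss4}--\eqref{help_cond_gauss6} from Proposition \ref{prop2}. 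This immediately delivers \eqref{help_exp_cond_quad1}, \eqref{help_exp_cond_quad2} and \eqref{help_exp_cond_quad3}.

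For \eqref{help_exp_cond_quad4} the integrand is just \ $(d-b)^2 t^2/T^2$, so one integration gives \ $(d-b)^2 T/3$. For \eqref{help_exp_cond_quad6} I would split the integral at \ $t=T/2$ \ because of the indicator. On \ $[0,T/2]$ \ the integrand reduces to \ $t^2/T + b^2 t^2/T^2$, while on \ $[T/2,T]$ \ several terms involving \ $(2t-T)$ \ appear; using \ $\int_{T/2}^T (2t-T)^2\,\dd t = T^3/12$, \ $\int_{T/2}^T t(2t-T)\,\dd t = 5T^3/24$, \ and the corresponding easy monomial integrals, I expect to collect the three summands in \eqref{help_exp_cond_quad6} by grouping the pure \ $b^2$, $bd$ \ and \ $(d-b)^2$ \ parts afterwards.

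The main obstacle is \eqref{help_exp_cond_quad5}, which mixes \ $\log\frac{T-t}{T}$ \ and \ $(\log\frac{T-t}{T})^2$ \ with cross terms from the square in \eqref{help_exp_cond_quad2}. My plan is to substitute \ $u=(T-t)/T$, $\dd t = -T\,\dd u$, so that
\[
 \frac{t}{T}=1-u,\qquad \frac{T-t}{T}=u,\qquad \log\frac{T-t}{T}=\log u,
\]
and the whole integral becomes \ $T$ \ times an integral over \ $u\in[0,1]$ \ of a polynomial in \ $u$, $(1-u)$, $\log u$ \ and \ $(\log u)^2$. Everything then reduces to the elementary moments
\[
 \int_0^1 u\,\dd u,\ \int_0^1 u^2\,\dd u,\ \int_0^1 u^2\log u\,\dd u=-\tfrac{1}{9},\ \int_0^1 u^2(\log u)^2\,\dd u=\tfrac{2}{27},\ \int_0^1 u(1-u)\log u\,\dd u=-\tfrac{5}{36},
\]
obtained by one or two integrations by parts. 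Collecting the $T^2$-coefficients yields \ $2\cdot\tfrac{1}{6}-2\cdot\tfrac{1}{9}-\tfrac{2}{27}=\tfrac{1}{27}$, explaining the \ $T^2/27$ \ term. The remaining \ $(d-b)^2$, $d(d-b)$ \ and \ $d^2$ \ terms have to be rewritten in the basis \ $(b-d)^2$, $b^2$, $bd$ \ by inserting \ $d^2=(b-d)^2-b^2+2bd$; a careful bookkeeping with common denominator \ $54$ \ is where arithmetic errors are most likely, but it yields exactly the \ $\tfrac{7}{54}(b-d)^2T+\tfrac{11}{54}b^2T-\tfrac{7}{54}bdT$ \ combination in \eqref{help_exp_cond_quad5}.

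As in the preceding proofs, the interchange of \ $\int_0^T\,\dd t$ \ and the conditional expectation \ $\Exp(\,\cdot\mid W_T=d)$ \ is justified by the monotone convergence theorem for conditional expectations together with the continuity of sample paths, exactly as discussed in the introduction; I would invoke this at the start without further comment.
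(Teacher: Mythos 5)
Your plan is correct and follows essentially the same route as the paper: the pointwise identities come from $\Exp(Y^2)=\Var(Y)+(\Exp Y)^2$ applied to the conditional Gaussian law of Proposition \ref{prop2}, and the integrated formulas come from the substitution $u=(T-t)/T$ (the paper's $s=(T-t)/T$) reducing everything to the elementary moments you list, all of which you state correctly, including the $T^2$-coefficient bookkeeping $\tfrac13-\tfrac29-\tfrac2{27}=\tfrac1{27}$. One arithmetic slip to fix before carrying it out: $\int_{T/2}^T(2t-T)^2\,\dd t=\tfrac{T^3}{6}$, not $\tfrac{T^3}{12}$; with the correct value the computation for \eqref{help_exp_cond_quad6} yields $\tfrac16 T^2+\tfrac13 b^2T-\tfrac5{12}dbT+\tfrac16 d^2T$, which matches the stated formula, whereas your value would give $\tfrac14 T^2$ and $\tfrac1{12}d^2T$ and fail to match.
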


\begin{proof}
By \eqref{help_cond_gauss1}, \eqref{help_cond_gauss4} and \eqref{quaddist}, for $0<t<T$ we get
 \eqref{help_exp_cond_quad1}.
Using that integration over the time-interval \ $[0,T]$ \ and taking conditional
 expectation can be interchanged (as explained in the Introduction),  we get
 \eqref{help_exp_cond_quad1} yields \eqref{help_exp_cond_quad4}.
By \eqref{help_cond_gauss2}, \eqref{help_cond_gauss5} and \eqref{quaddist},
 we have \eqref{help_exp_cond_quad2}, and hence, by a change of variables $s=(T-t)/T$ and partial integration,
 we get
\begin{align*}
& \Exp\left(\int_0^T(W_t-W_{t}^{\rm ir})^2\,\dd t\,\bigg|\,W_T=d\right)\\
& \quad=\int_0^T\Bigg[2t\frac{T-t}{T}+(d-b)^2\frac{t^2}{T^2}+2d(d-b)\frac{t}{T}\frac{T-t}{T}\log\frac{T-t}{T}\\
& \quad\phantom{=\int_0^T\quad }
      + 2T\frac{(T-t)^2}{T^2}\log\frac{T-t}{T}
      + (d^2-T)\frac{(T-t)^2}{T^2}\left(\log\frac{T-t}{T}\right)^2\Bigg]\,\dd t\\
& \quad=\frac13(d-b)^2T+\int_0^1\Big[2T^2(1-s)s+2d(d-b)T(1-s)s\log s\\
& \quad\phantom{=\frac13(d-b)^2T+\int_0^1}+2T^2s^2\log s+(d^2-T)Ts^2 (\log s)^2\Big]\,\dd s\\
& \quad=\frac13(d-b)^2T+T^2-\frac23T^2-d(d-b)T\int_0^1s\,ds+\frac23d(d-b)T\int_0^1s^2\,\dd s\\
& \quad\phantom{=}-\frac23T^2\int_0^1s^2\,ds-(d^2-T)T\int_0^1\frac23s^2\log s\,\dd s\\
& \quad=\frac13(d-b)^2T+\frac13T^2-\frac12d(d-b)T+\frac29d(d-b)T-\frac29T^2+\frac29(d^2-T)T\int_0^1s^2\,\dd s\\
& \quad=\frac13(d-b)^2T+\frac19T^2-\frac5{18}d(d-b)T+\frac2{27}(d^2-T)T\\
& \quad=\frac1{27}T^2+\frac7{54}d^2T-\frac7{18}dbT+\frac13b^2T,
\end{align*}
 which yields \eqref{help_exp_cond_quad5}.
Finally, by \eqref{help_cond_gauss3}, \eqref{help_cond_gauss6} and \eqref{quaddist},
 we have \eqref{help_exp_cond_quad3}, and hence, by a change of variables $s=2t-T$, we get
\begin{align*}
& \Exp\left(\int_0^T(W_t-W_{t}^{\rm st})^2\,\dd t\,\bigg|\,W_T=d\right)\\
& \quad=\frac13T^2+\frac13b^2T
     -\int_{\frac{T}{2}}^T\Bigg[\frac{(2t-T)^2}{T}+\frac{2dbt(2t-T)}{T^2}-\frac{d^2}{T^2}(2t-T)^2\Bigg]\,\dd t\\
& \quad=\frac13T^2+\frac13b^2T
         -\frac12\int_0^T\Bigg[\frac{s^2}{T}+\frac{db(s+T)s}{T^2}-\frac{d^2}{T^2}s^2\Bigg]\,\dd s\\
& \quad=\frac13T^2+\frac13b^2T-\frac12\left(\frac13T^2+\frac13dbT+\frac12dbT-\frac13d^2T\right)\\
& \quad=\frac16T^2+\frac13b^2T-\frac5{12}dbT+\frac16d^2T,
\end{align*}
 which yields \eqref{help_exp_cond_quad6}.
\end{proof}

In what follows we give a complete comparison of the quantities
 \eqref{help_exp_cond_quad4}, \eqref{help_exp_cond_quad5} and \eqref{help_exp_cond_quad6}.
Let $\tilde b=b/\sqrt{T}$ and $\tilde d=d/\sqrt{T}$.
 Using the notation
$$
  e_{\rm br}:=\Exp\left(\int_0^T(W_t-W_{t}^{\rm br})^2\,\dd t\,\bigg|\,W_T=d\right),
$$
 by Theorem \ref{prop5}, we have
\begin{align*}
e_{\rm av} & =\frac13(\tilde b-\tilde d)^2T^2,\\
e_{\rm ir} & =\left(\frac7{54}(\tilde b-\tilde d)^2
  +\frac{11}{54}\tilde b^2-\frac{7}{54}\tilde b\tilde d+\frac1{27}\right)T^2,\\
e_{\rm st} & =\left(\frac1{6}(\tilde b-\tilde d)^2+\frac{1}{6}\tilde b^2
               -\frac{1}{12}\tilde b\tilde d+\frac1{6}\right)T^2.
\end{align*}
Hence we easily calculate
\begin{align*}
 e_{\rm av}>e_{\rm ir}
 & \iff\frac{11}{54}(\tilde b-\tilde d)^2>\frac{11}{54}\tilde b^2-\frac7{54}\tilde b \tilde d+\frac1{27}\\[2mm]
 & \iff\left|\tilde d-\frac{15}{22}\tilde b\right|>\sqrt{\frac2{11}
     +\left(\frac{15}{22}\right)^2(\tilde b)^2},\\[2ex]
 e_{\rm av}>e_{\rm st} & \iff\frac{1}{6}(\tilde b-\tilde d)^2
    >\frac{1}{6}\tilde b^2 -\frac1{12}\tilde b\tilde d+\frac1{6}\\[2mm]
 & \iff\left|\tilde d-\frac{3}{4}\tilde b\right|>\sqrt{1+\frac{9}{16}(\tilde b)^2}
   \intertext{and}
 e_{\rm st} < e_{\rm ir} & \iff\frac{1}{27}(\tilde b-\tilde d)^2<\frac{1}{27}\tilde b^2
      -\frac5{108}\tilde b\tilde d-\frac7{54}\\[2mm]
 & \iff
    \left|\tilde d-\frac{3}{8}\tilde b\right|
       <\sqrt{\frac{9}{64}(\tilde b)^2-\frac72}\quad\text{ and }\quad(\tilde b)^2\geq \frac{224}{9}.
 \end{align*}
The corresponding regions are graphically illustrated in Figure \ref{Figure3}.
\begin{figure}[h]
\includegraphics[scale=0.45]{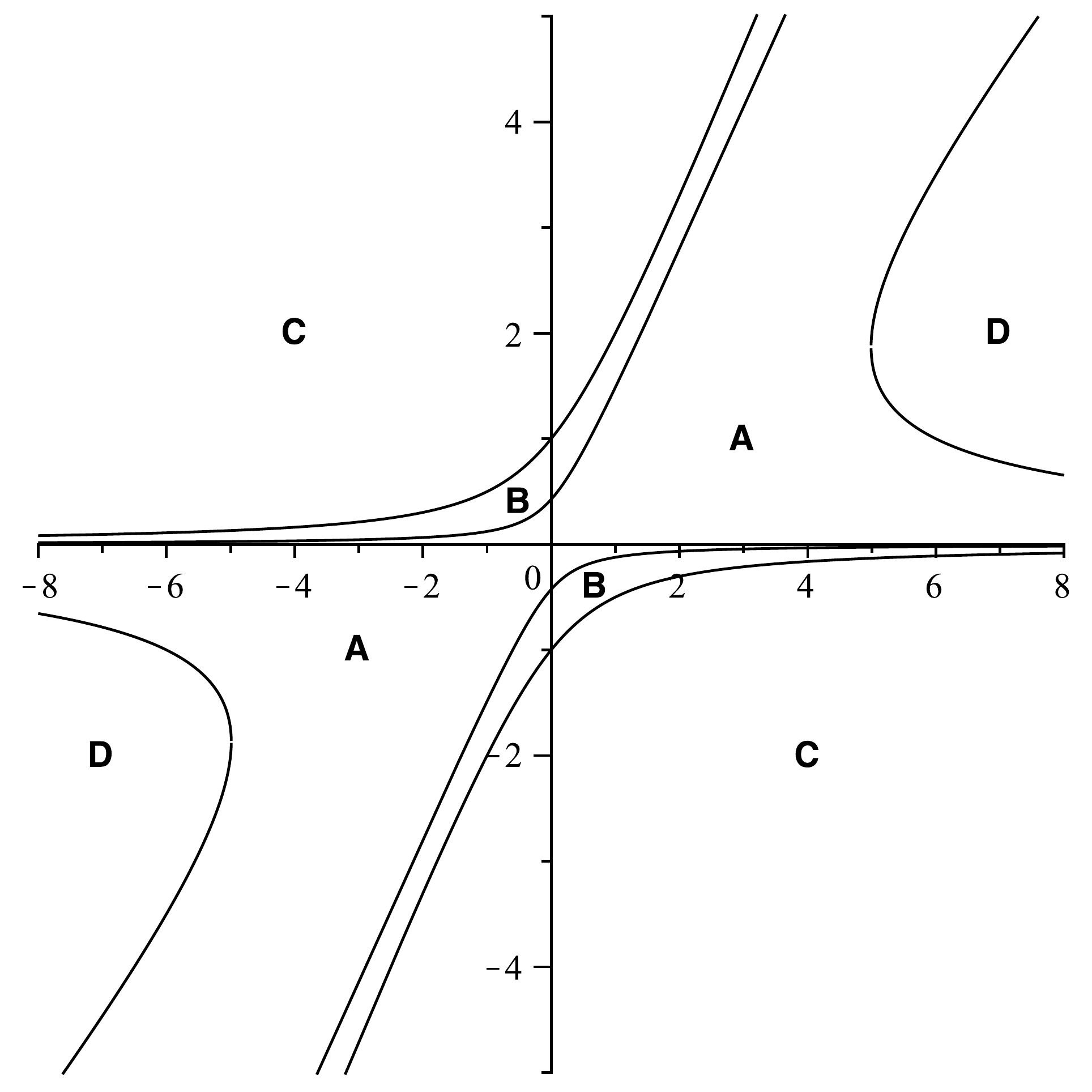}
\caption{\small Regions in the $(\tilde b,\tilde d)$-plain for which {\rm\bf A}:
 $e_{\rm av}<e_{\rm ir}<e_{\rm st}$, {\rm\bf B}: $e_{\rm ir}<e_{\rm av}<e_{\rm st}$,
 {\rm\bf C}: $e_{\rm ir}<e_{\rm st}<e_{\rm av}$, and {\rm\bf D}: $e_{\rm av}<e_{\rm st}<e_{\rm ir}$.}
\label{Figure3}
\end{figure}

 Finally, we remark that Theorem \ref{prop5} justifies our simulation results in the case of the endpoint
 \ $W_T$ \ of the Wiener sample path is close to the prescribed endpoint \ $b$ \ of its
 bridge.
Indeed, in case of \ $d=b$, \ by Theorem \ref{prop5}, we get
 \begin{align*}
   & \Exp\left(\int_0^T(W_t-W_{t}^{\rm av})^2\,\dd t\,\bigg|\,W_T=d\right) = 0,\\
   & \Exp\left(\int_0^T(W_t-W_{t}^{\rm ir})^2\,\dd t\,\bigg|\,W_T=d\right)
          = \frac{2}{27}d^2T + \frac{1}{27}T^2,\\
   & \Exp\left(\int_0^T(W_t-W_{t}^{\rm st})^2\,\dd t\,\bigg|\,W_T=d\right) = \frac{1}{12}d^2 T+\frac{1}{6}T^2,
 \end{align*}
 which shows that the expected conditional quadratic path deviation of the Wiener process
 from the anticipative version of its bridge is \ $0$ \ being smaller than from
 the integral representation of the bridge or from the space-time bridge version.

\section{Path deviation of the Ornstein-Uhlenbeck process from its bridges}

\subsection{Ornstein-Uhlenbeck bridge versions}

Let $(U^{a}_{t})_{t\geq0}$ be a one-dimensional Ornstein-Uhlenbeck process starting in $a\in\RR$,
 i.e., it is the unique strong solution of the SDE
$$
 \dd U^{a}_{t}=q\,U^{a}_{t}\,\dd t+\sigma\,\dd W_{t}
  \quad\text{ with initial condition}\quad U^{a}_{0}=a
$$
for some $q\not=0$ and $\sigma>0$, where $(W_t)_{t\geq 0}$ is a standard Wiener process.
It is well-known that the Ornstein-Uhlenbeck process has the integral representation
$$
  U^{a}_{t}=\ee^{qt}\Big(a+\sigma\int_{0}^t\ee^{-qs}\,\dd W_{s}\Big),\qquad t\geq0,
$$
 which is a Gauss process with mean function $\Exp(U^{a}_{t})=a\ee^{qt}$ and
 covariance function $\Cov(U^{a}_{s},U^{a}_{t})=\sigma^2\frac{\ee^{qt}}{q}\,\sinh(qs)$ for $0\leq s\leq t$.
We also have \ $U^{a}_t = a\ee^{qt} + U^{0}_{t}$, $t\geq 0$, \ where \ $(U^{0}_t)_{t\geq 0}$ \ is a
one-dimensional Ornstein-Uhlenbeck process starting in 0.

We consider the following versions of the Ornstein-Uhlenbeck bridge from $a$ to $b$ over the
 time-interval $[0,T]$, where $a,b\in\RR$:

{\bf  1. Anticipative version}
$$
 U_{t}^{{\rm av}}
  = a\,\frac{\sinh(q(T-t))}{\sinh(qT)}
    + b\,\frac{\sinh(qt)}{\sinh(qT)}
    + \left(U^{0}_{t}-\frac{\sinh(qt)}{\sinh(qT)}\,U^{0}_{T}\right)\,,\, 0\leq t\leq T.
 $$
Up to our knowledge this anticipative version of the Ornstein-Uhlenbeck bridge first appears
 on page 378 of Donati-Martin \cite{DonMar} for $a=b=0$ and in Lemma 1 of Papie\.{z} and
 Sandison \cite{PapSan} for special values of $q$ and $\sigma$.
It is also an easy consequence of Theorem 2 in Delyon and Hu \cite{DelHu} and of Proposition 4 in Gasbarra,
 Sottinen and Valkeila \cite{GasSotVal}.

{\bf  2. Integral representation}
$$
  U_{t}^{{\rm ir}}=
   \begin{cases}
     \displaystyle a\,\frac{\sinh(q(T-t))}{\sinh(qT)}+b\,\frac{\sinh(qt)}{\sinh(qT)}
            +\sigma\int_{0}^t\frac{\sinh(q(T-t))}{\sinh(q(T-s))}\,\dd W_{s} & \text{if \ $0\leq t<T$,}\\
      b & \text{if \ $t=T$.}
    \end{cases}
$$
This integral representation of the Ornstein-Uhlenbeck bridge is the unique strong solution of the below
 given SDE \eqref{oubridgesde}, see, e.g., Barczy and Kern \cite[Remark 3.9]{Barpbk}.

{\bf 3. Space-time transform}
 $$
 U_{t}^{{\rm st}}=
   \begin{cases}
     \displaystyle a\,\frac{\sinh(q(T-t))}{\sinh(qT)}+b\,\frac{\sinh(qt)}{\sinh(qT)}
       +\sigma\,\ee^{qt}\,\frac{\kappa(T)-\kappa(t)}{\kappa(T)}\,W_{\frac{\kappa(t)\kappa(T)}{\kappa(T)-\kappa(t)}} & \text{if \ $0\leq t<T$,}\\
     b & \text{if \ $t=T$,}
   \end{cases}
 $$
with the strictly increasing time-change
$$
   \RR\ni t\mapsto\kappa(t)=\frac{\ee^{-qt}\sinh(qt)}{q}=\frac{1-\ee^{-2qt}}{2q}.
$$
This space-time transform of the Ornstein-Uhlenbeck bridge goes back to the proof of Lemma 1
 in Papie\.{z} and Sandison \cite{PapSan} and is roughly speaking a time-transformation by $\kappa$ and a
 rescaling with the coefficient \ $\ee^{qt}$ \ of the space-time transform representation
 $(W_{t}^{{\rm st}})_{t\in[0,T]}$ of the Wiener bridge from $a$ to $b$ over the time-interval $[0,T]$.

\begin{remark}
We note that the previous versions of an Ornstein-Uhlenbeck bridge are in accordance with the
 corresponding versions of a usual standard Wiener bridge introduced in the introduction.
By this we mean that for all \ $T>0$, \ $t\in[0,T]$ \ and \ $\sigma=1$, \ $U_{t}^{\rm br}$
 \ converges to \ $W_{t}^{\rm br}$ \ in \ $L^2(\Omega,\mathcal F,\PP)$ \ as \ $q\to0$.
\ Indeed,
 \begin{align*}
   &\lim_{q\to 0} \frac{\sinh(q(T-t))}{\sinh(qT)}
       = \lim_{q\to 0} \frac{(T-t)\cosh(q(T-t))}{T\cosh(qT)}
       = \frac{T-t}{T}, \\[1mm]
  &\lim_{q\to 0} \frac{\sinh(qt)}{\sinh(qT)}
       = \lim_{q\to 0} \frac{t\cosh(qt)}{T\cosh(qT)}
       = \frac{t}{T},\\[1mm]
  &\lim_{q\to 0}
    \ee^{qt}\frac{\kappa(T)-\kappa(t)}{\kappa(T)}
      = \lim_{q\to 0} \frac{\ee^{-qt} - \ee^{q(t-2T)}}{1 - \ee^{-2qT} }
      = \lim_{q\to 0} \frac{-t\ee^{-qt} - (t-2T)\ee^{q(t-2T)}}{2T\ee^{-2qT}}
      = \frac{T-t}{T},\\[1mm]
  &\lim_{q\to 0}\frac{\kappa(t)\kappa(T)}{\kappa(T)-\kappa(t)}
     = \lim_{q\to 0}\left( \kappa(t) + \frac{\kappa(t)^2}{\kappa(T) - \kappa(t) }\right)
     =  t +  \lim_{q\to 0} \frac{(1- \ee^{-2qt})^2}{2q(\ee^{-2qt} - \ee^{-2qT})} \\
  &\phantom{\lim_{q\to 0}\frac{\kappa(t)\kappa(T)}{\kappa(T)-\kappa(t)}\;}
     = t + \frac{t^2}{T-t}
     = \frac{tT}{T-t}.
 \end{align*}
Further,
 \begin{align*}
   \Exp(U_t^0-W_t)^2
   & = \Exp\left( \int_0^t (\ee^{q(t-s)} - 1)\,\dd W_s\right)^2
     = \int_0^t (\ee^{q(t-s)} - 1)^2 \,\dd s \\
   & = \frac{\ee^{2qt}-1}{2q} + \frac{2}{q}(1-\ee^{qt}) + t
        \to 0 \qquad \text{as \ $q\to 0$,}
 \end{align*}
 and
 \begin{align*}
  &\Exp\left(\int_0^t\frac{\sinh(q(T-t))}{\sinh(q(T-s))}\,\dd W_s
              - \int_0^t \frac{T-t}{T-s}\,\dd W_s \right)^2 \\
  &\qquad\qquad = \int_0^t \left( \frac{\sinh(q(T-t))}{\sinh(q(T-s))}
              - \frac{T-t}{T-s}\right)^2\,\dd s
     \to 0  \qquad \text{as \ $q\to 0$,}
 \end{align*}
 where the convergence follows by Lebesgue's dominated convergence theorem.
\proofend
\end{remark}

In all what follows we will use the notation $(U_{t}^{\rm br})_{t\in[0,T]}$ if the version of the bridge
 is not specified.

First we present a lemma about a time-transformation which will be useful for calculating
 $\Var(U^{a}_{t}-U_{t}^{\rm st})$ and $\Cov(U_{s}^{\rm st},U_{t}^{\rm st})$, $0\leq s,t<T$.

\begin{lemma}\label{lemma_transform}
For the time-transformation \ $\kappa_{T}^\ast(t):=\frac{\kappa(t)\kappa(T)}{\kappa(T)-\kappa(t)}$,
 $t\in[0,T)$, with $\kappa(t):=\frac{1-\ee^{-2qt}}{2q}$, $t\in\RR$, \
 we get \ $\kappa_T^*$ \ is strictly increasing and \ $\kappa_{T}^\ast(t)\geq t$ \ for all \ $t\in[0,T)$.
\end{lemma}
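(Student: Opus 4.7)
The plan is to handle the two assertions separately. For strict monotonicity, I would first rewrite
\[
 \kappa_T^*(t) = \frac{\kappa(t)\kappa(T)}{\kappa(T)-\kappa(t)} = \frac{\kappa(T)^2}{\kappa(T)-\kappa(t)} - \kappa(T),
\]
which makes the dependence on $t$ transparent, since only the denominator $\kappa(T)-\kappa(t)$ varies. Because $\kappa'(t)=\ee^{-2qt}>0$, the function $\kappa$ is strictly increasing on $\RR$ with $\kappa(0)=0$ and $\kappa(T)>0$, so $\kappa(T)-\kappa(t)$ is a strictly positive, strictly decreasing function of $t\in[0,T)$; dividing the positive constant $\kappa(T)^2$ by it therefore yields a strictly increasing function, and hence so does $\kappa_T^*$.

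For the lower bound $\kappa_T^*(t)\geq t$, the key observation is that $\kappa_T^*(0)=0$, so by the fundamental theorem of calculus it suffices to show $(\kappa_T^*)'(t)\geq 1$ on $[0,T)$. A direct differentiation of the rewrite gives
\[
 (\kappa_T^*)'(t) = \frac{\kappa(T)^2\,\ee^{-2qt}}{(\kappa(T)-\kappa(t))^2}.
\]
Because $\kappa(T)\,\ee^{-qt}$ and $\kappa(T)-\kappa(t)$ are both positive, extracting square roots reduces $(\kappa_T^*)'(t)\geq 1$ to the equivalent assertion
\[
 \kappa(t)\geq \kappa(T)\bigl(1-\ee^{-qt}\bigr).
\]

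To verify this, I would substitute $\kappa(t)=\frac{(1-\ee^{-qt})(1+\ee^{-qt})}{2q}$ and $\kappa(T)=\frac{1-\ee^{-2qT}}{2q}$. Noting that $(1-\ee^{-qt})/(2q)\geq 0$ for all $t\in[0,T)$ (numerator and denominator carry the same sign, regardless of whether $q>0$ or $q<0$), one can divide through by this common nonnegative factor; for $t\in(0,T)$ the factor is strictly positive, yielding the equivalent inequality $1+\ee^{-qt}\geq 1-\ee^{-2qT}$, i.e.\ $\ee^{-qt}+\ee^{-2qT}\geq 0$, which is trivial, and the case $t=0$ holds with equality.

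The main nuisance will be the careful book-keeping of signs, since the statement allows $q\neq 0$ of either sign and several intermediate algebraic manipulations involve dividing by quantities whose sign depends on $q$; the observation that $(1-\ee^{-qt})/(2q)$ is nonnegative in either regime is the clean way of unifying the two cases inside a single argument.
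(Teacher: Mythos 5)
Your proof is correct, and it follows the same overall skeleton as the paper's: monotonicity via viewing $\kappa_T^*$ as an increasing function of $\kappa(t)$, and the bound $\kappa_T^*(t)\geq t$ via $\kappa_T^*(0)=0$ together with $(\kappa_T^*)'(t)\geq 1$. Where you genuinely diverge is in how the derivative bound is established. The paper computes the second derivative, checks that $\kappa''(t)(\kappa(T)-\kappa(t))+2(\kappa'(t))^2=\ee^{-4qt}+\ee^{-2q(T+t)}>0$, concludes that $(\kappa_T^*)'$ is strictly increasing, and then uses $(\kappa_T^*)'(0)=1$. You instead prove the pointwise inequality $(\kappa_T^*)'(t)\geq 1$ directly: since $(\kappa_T^*)'(t)=\bigl(\kappa(T)\ee^{-qt}\bigr)^2/(\kappa(T)-\kappa(t))^2$ is a ratio of squares of positive quantities, taking square roots reduces it to $\kappa(t)\geq\kappa(T)(1-\ee^{-qt})$, which falls out after factoring $1-\ee^{-2qt}=(1-\ee^{-qt})(1+\ee^{-qt})$ and cancelling the nonnegative common factor $(1-\ee^{-qt})/(2q)$ — whose sign-independence in $q$ you correctly flag as the one delicate point. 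Your route avoids the second-derivative computation entirely and is arguably cleaner; the paper's route yields the extra (unused) information that $\kappa_T^*$ is convex with strictly increasing derivative, which also gives them $\lim_{t\uparrow T}\kappa_T^*(t)=\infty$ and the existence of the change point $t^*$ used later in Section 3. Both arguments are complete and valid.
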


\begin{proof}
Since the function $[0,T)\ni t\mapsto\frac{tT}{T-t}$ is strictly increasing and
 $\RR\ni t\mapsto\kappa(t)=\frac{1-\ee^{-2qt}}{2q}$ is strictly increasing for every $q\not=0$,
 we get that $[0,T)\ni t\mapsto\frac{\kappa(t)\kappa(T)}{\kappa(T)-\kappa(t)}=:\kappa_{T}^\ast(t)$
 is strictly increasing.
Further, easy calculations show that
\begin{enumerate}
\item $\kappa_{T}^\ast(0)=0$ and $\lim_{t\uparrow T}\kappa_{T}^\ast(t)=\infty$.
\item $\kappa_{T}^\ast$ is differentiable on $[0,T)$, namely
\begin{align*}
(\kappa_{T}^\ast)'(t) &
 =\frac{\kappa'(t)\kappa(T)(\kappa(T)-\kappa(t))+\kappa(t)\kappa(T)\kappa'(t)}{(\kappa(T)-\kappa(t))^2}
 =\frac{\kappa'(t)\kappa^2(T)}{(\kappa(T)-\kappa(t))^2},
 \quad t\in[0,T),
\end{align*}
with $\kappa'(t)=\ee^{-2qt}$ and hence $(\kappa_{T}^\ast)'(0)=1$.
\item For the second derivative we get
\begin{align*}
(\kappa_{T}^\ast)''(t)
& =\frac{\kappa''(t)\kappa^2(T)(\kappa(T)-\kappa(t))^2
    +2(\kappa(T)-\kappa(t))(\kappa')^2(t)\kappa^2(T)}{(\kappa(T)-\kappa(t))^4}\\
& =\frac{\kappa^2(T)\big(\kappa''(t)(\kappa(T)-\kappa(t))+2(\kappa')^2(t)\big)}{(\kappa(T)-\kappa(t))^3},
 \quad t\in[0,T).
\end{align*}
Since $\kappa''(t)(\kappa(T)-\kappa(t))+2(\kappa')^2(t)=\ee^{-4qt}+\ee^{-2q(T+t)}>0$ we have
 $(\kappa_{T}^\ast)'$ is strictly increasing.
\end{enumerate}
Altogether this shows that $(\kappa_{T}^\ast)'(t)\geq (\kappa_{T}^\ast)'(0)=1$, $t\in[0,T)$ and hence
 $\kappa_{T}^\ast(t)\geq t$ for all $t\in[0,T)$.
\end{proof}

\begin{prop}\label{prop6}
Let \ $(U_{t}^{\rm br})_{t\in[0,T]}$ \ be an Ornstein-Uhlenbeck bridge from \ $a$ \ to \ $b$ \
 over the time-interval \ $[0,T]$, \ where \ $a,b\in\RR$.
Then \ $(U_{t}^{\rm br})_{t\in[0,T]}$ \ is a Gauss process with mean function
 \[
   \Exp(U_t^{{\rm br}})=a\,\frac{\sinh(q(T-t))}{\sinh(qT)}+b\,\frac{\sinh(qt)}{\sinh(qT)},\qquad 0\leq t<T,
 \]
 and with covariance function
 \begin{equation}\label{oubridgecov}
  \Cov(U_{s}^{\rm br},U_{t}^{\rm br})=\frac{\sigma^2}{q}\,\frac{\sinh(qs)\sinh(q(T-t))}{\sinh(qT)},
   \qquad  0\leq s\leq t<T.
\end{equation}
Hence all the bridge versions above have the same finite-dimensional distributions.
\end{prop}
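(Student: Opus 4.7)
\medskip

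\textbf{Proof plan.} Each of the three bridge versions is obtained from a Gaussian input (either $(U^0_t)_{t\ge 0}$ or the Wiener process $(W_t)_{t\ge 0}$) by deterministic linear operations and/or time changes. Consequently $(U^{\rm br}_t)_{t\in[0,T)}$ is a Gauss process in every case. Since $\Exp(U^0_t) = 0$ and $\Exp(W_t) = 0$, the random part of each representation has zero mean, and the deterministic drift is the same in all three definitions, so the stated mean function follows immediately. It thus remains to verify the covariance identity \eqref{oubridgecov} separately for the three versions.

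For the anticipative version, the plan is to expand $\Cov\bigl(U^0_s - \frac{\sinh(qs)}{\sinh(qT)} U^0_T,\, U^0_t - \frac{\sinh(qt)}{\sinh(qT)} U^0_T\bigr)$ by bilinearity and substitute $\Cov(U^0_u, U^0_v) = \frac{\sigma^2}{q} \ee^{qv}\sinh(qu)$ for $0\le u\le v$ (recalled in the introduction of Section 3). After collecting terms the computation reduces to the hyperbolic identity
\begin{equation*}
  \ee^{qt}\sinh(qT) - \ee^{qT}\sinh(qt) = \sinh\bigl(q(T-t)\bigr),
\end{equation*}
which is a direct consequence of $\sinh(x-y)=\sinh x\cosh y-\cosh x\sinh y$ and delivers \eqref{oubridgecov} at once.

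For the integral representation, the Itô isometry gives, for $0\le s\le t<T$,
\begin{equation*}
  \Cov(U^{\rm ir}_s,U^{\rm ir}_t)
   = \sigma^2\,\sinh(q(T-s))\sinh(q(T-t))\int_0^s\frac{\dd r}{\sinh^2(q(T-r))}.
\end{equation*}
Since $\frac{\dd}{\dd r}\coth(q(T-r)) = q\,\sinh^{-2}(q(T-r))$, evaluating the integral produces $q^{-1}[\coth(q(T-s))-\coth(qT)]$, and combining the two fractions and applying $\sinh(A-B)=\sinh A\cosh B-\cosh A\sinh B$ with $A=qT$, $B=q(T-s)$ reduces this to $\sinh(qs)/[q\,\sinh(q(T-s))\sinh(qT)]$. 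After cancellation one obtains \eqref{oubridgecov}.

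For the space-time transform, by Lemma \ref{lemma_transform} the time change $\kappa_T^\ast$ is strictly increasing, so for $0\le s\le t<T$,
\begin{equation*}
 \Cov(U^{\rm st}_s,U^{\rm st}_t)
  = \sigma^2 \ee^{q(s+t)} \frac{(\kappa(T)-\kappa(s))(\kappa(T)-\kappa(t))}{\kappa(T)^2}\,\kappa_T^\ast(s).
\end{equation*}
Substituting $\kappa_T^\ast(s) = \kappa(s)\kappa(T)/(\kappa(T)-\kappa(s))$ collapses the $\kappa(T)-\kappa(s)$ factor and leaves $\sigma^2 \ee^{qs}\kappa(s)\cdot \ee^{qt}(\kappa(T)-\kappa(t))/\kappa(T)$. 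Using $\kappa(u) = q^{-1}\ee^{-qu}\sinh(qu)$ one checks the two auxiliary identities $\ee^{qs}\kappa(s)=q^{-1}\sinh(qs)$ and $\ee^{qt}(\kappa(T)-\kappa(t))/\kappa(T)=\sinh(q(T-t))/\sinh(qT)$ (the latter by multiplying numerator and denominator by $\ee^{qT}$ inside the fraction $(\ee^{-qt}-\ee^{q(t-2T)})/(1-\ee^{-2qT})$), which yields \eqref{oubridgecov}. The main obstacle throughout is really only bookkeeping: all three arguments hinge on the same hyperbolic sine subtraction identity, and once this is spotted the coincidence of the three covariance functions follows from elementary algebra.
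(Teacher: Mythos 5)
Your proposal is correct and follows essentially the same route as the paper: Gaussianity and the mean function from linearity, then the covariance for the anticipative version by bilinearity with $\Cov(U^0_u,U^0_v)=\frac{\sigma^2}{q}\ee^{qv}\sinh(qu)$, for the integral representation by the It\^o isometry and the antiderivative of $\sinh^{-2}$, and for the space-time transform via the monotonicity of $\kappa_T^\ast$ from Lemma \ref{lemma_transform}, all closed out by the identity $\sinh(A-B)=\sinh A\cosh B-\cosh A\sinh B$. All the auxiliary identities you cite check out, so nothing further is needed.
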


\begin{proof}
For $0\leq s\leq t<T$, we have the covariance function
 \begin{align*}
  \Cov(U_{s}^{\rm av},U_{t}^{\rm av})
    & = \Cov\left(U^{0}_{s}-\frac{\sinh(qs)}{\sinh(qT)}\,U^{0}_{T},
                  U^{0}_{t}-\frac{\sinh(qt)}{\sinh(qT)}\,U^{0}_{T}\right)\\
    & = \Cov(U^{0}_{s},U^{0}_{t})-\frac{\sinh(qt)}{\sinh(qT)}
        \,\Cov(U^{0}_{s},U^{0}_{T})-\frac{\sinh(qs)}{\sinh(qT)}\,\Cov(U^{0}_{t},U^{0}_{T})\\
    &   \phantom{=\;\;\;}+\frac{\sinh(qs)\sinh(qt)}{\sinh^2(qT)}\,\Cov(U^{0}_{T},U^{0}_{T})\\
    & = \sigma^2\frac{\ee^{qt}}{q}\,\sinh(qs)-\frac{\sinh(qt)}{\sinh(qT)}\,\sigma^2\frac{\ee^{qT}}{q}\,\sinh(qs)\\
    & = \frac{\sigma^2}{q}\,\frac{\sinh(qs)\sinh(q(T-t))}{\sinh(qT)}
\end{align*}
and
\begin{align*}
\Cov(U_{s}^{\rm ir},U_{t}^{\rm ir})
 & =\sigma^2\Cov\left(\int_{0}^s\frac{\sinh(q(T-s))}{\sinh(q(T-r))}\,\dd W_{r},
                        \int_{0}^t\frac{\sinh(q(T-t))}{\sinh(q(T-r))}\,\dd W_{r}\right)\\
& =\sigma^2\int_{0}^s\frac{\sinh(q(T-s))\sinh(q(T-t))}{\sinh^2(q(T-r))}\,\dd r\\
& =\frac{\sigma^2}{q}\,\sinh(q(T-s))\sinh(q(T-t))\int_{q(T-s)}^{qT}\frac1{\sinh^2v}\,\dd v\\
& =\frac{\sigma^2}{q}\,\sinh(q(T-s))\sinh(q(T-t))\left(\frac{\cosh(q(T-s))}{\sinh(q(T-s))}
           -\frac{\cosh(qT)}{\sinh(qT)}\right)\\
& =\frac{\sigma^2}{q}\,\sinh(q(T-s))\sinh(q(T-t))\,\frac{\sinh(qT-q(T-s))}{\sinh(q(T-s))\sinh(qT)}\\
& =\frac{\sigma^2}{q}\,\frac{\sinh(qs)\sinh(q(T-t))}{\sinh(qT)}.
\end{align*}
By Lemma \ref{lemma_transform}, for $0\leq s\leq t<T$ we get
\begin{align*}
 \Cov(U_{s}^{\rm st},U_{t}^{\rm st})
  & =\sigma^2\ee^{q(s+t)}\,\frac{\kappa(T)-\kappa(s)}{\kappa(T)}\,\frac{\kappa(T)-\kappa(t)}{\kappa(T)}
      \,\Cov\left(W_{\frac{\kappa(s)\kappa(T)}{\kappa(T)-\kappa(s)}},
                  W_{\frac{\kappa(t)\kappa(T)}{\kappa(T)-\kappa(t)}}\right)\\
& =\sigma^2\ee^{q(s+t)}\,\frac{\kappa(T)-\kappa(s)}{\kappa(T)}
    \,\frac{\kappa(T)-\kappa(t)}{\kappa(T)}\,\frac{\kappa(s)\kappa(T)}{\kappa(T)-\kappa(s)}\\
& = \sigma^2\ee^{qt}\frac{\sinh(qs)}{\ee^{-qT}\sinh(qT)}\frac{\ee^{-2qt}-\ee^{-2qT}}{2q}
  =\frac{\sigma^2}{q}\,\frac{\sinh(qs)\sinh(q(T-t))}{\sinh(qT)}
\end{align*}
concluding the proof.
\end{proof}

In what follows we study the continuity of the sample paths of the bridge versions.
It follows from the definitions that all bridge versions have almost sure continuous sample paths
 on \ $[0,T)$.
\ The (left) continuity of the trajectories at \ $t=T$ \ is also obvious in case of the anticipative version,
 but not in case of the integral representation and the space-time transform.
The strong law of large numbers for a standard Wiener process (see, e.g., Problem 2.9.3 in Karatzas and Shreve
 \cite{KarShr}) yields the desired continuity for the space-time transform.
The above mentioned continuity for the integral representation follows from
 Lemma 4.5 in Barczy and Kern \cite{Barpbk}.
For the sake of completeness and easier lucidity, in the Appendix we formulate and prove this lemma in the
 present setting (without reference to the notations in Barczy and Kern \cite{Barpbk}).

Hence the anticipative version \ $U^{\rm av}$, \ the integral representation \ $U^{\rm ir}$ \ and the
 space-time transform \ $U^{\rm st}$ \ induce the same probability measure on
 \ $(C[0,T],\cB(C[0,T]))$. \
This underlines and explains the definition of an Ornstein-Uhlenbeck bridge from \ $a$ \ to \ $b$
 \ over the time-interval \ $[0,T]$, \ by which we mean any almost surely continuous Gauss process
 having mean function and covariance function given in Proposition \ref{prop6}.

 We also note that the finite dimensional distributions of the Ornstein-Uhlenbeck bridge
 versions coincide with the conditional finite dimensional distributions of the Ornstein-Uhlenbeck
 process \ $(U^{a}_{t})_{t\in[0,T]}$ \ (starting in $a$) and conditioned on \ $\{U^{a}_{T}=b\}$, \ see, e.g.,
 Delyon and Hu \cite[Theorem 2]{DelHu}, Gasbarra, Sottinen and Valkeila
 \cite[Proposition 4]{GasSotVal} or Barczy and Kern \cite[Proposition 3.5]{Barpbk}.

\subsection{Different sample path behavior of Ornstein-Uhlenbeck bridge versions}

First we present an indicator for different sample path behavior of the Ornstein-Uhlenbeck
 bridge versions.
If we consider the linear SDE
\begin{equation}\label{oubridgesde}
 \dd U_{t}^{\rm br} = q\left(-\coth(q(T-t))U_{t}^{\rm br} + \frac{b}{\sinh(q(T-t))}\right)\dd t
                             + \sigma\,\dd W_{t} \,,\quad 0\leq t<T
\end{equation}
with initial condition \ $U_{0}^{\rm br}=a$, \ then the integral representation is the unique strong
 solution of this SDE (see, e.g., Delyon and Hu \cite[Proposition 3]{DelHu}
 or Barczy and Kern \cite[Remark 3.10]{Barpbk}) and the anticipative version and the space-time transform
 are only weak solutions.
Indeed, if the anticipative version and the space-time transform were also strong solutions, then,
 by the definition of strong solution, we would get
 \ $\PP(U_{t}^{\rm av}=U_{t}^{\rm ir},\; \forall\; t\in[0,T))=1$
 \ and $\PP(U_{t}^{\rm st}=U_{t}^{\rm ir},\; \forall\; t\in[0,T))=1$, which are not true.
 For example, it does not hold that \ $\Var (U^{a}_{t} - U_{t}^{\rm av}) = \Var (U^{a}_{t} - U_{t}^{\rm st})$
 \ for all \ $t\in[0,T)$, \ and \ $\Var (U^{a}_{t} - U_{t}^{\rm st}) = \Var (U^{a}_{t} - U_{t}^{\rm ir})$
 \ for all \ $t\in[0,T)$, \ by Propositions \ref{prop7} and \ref{prop8} below.
We present another indicator for different sample path behavior of the
 Ornstein-Uhlenbeck bridge versions by calculating the covariances
  \ $\Cov(U_{t}^{\rm br},U^{a}_{t})$ \ of the coordinates of the two-dimensional Gauss process
 \ $(U_{t}^{\rm br},U^{a}_{t})_{t\in[0,T]}$. Note that these formulas are hard to compare.

 \begin{prop}\label{prop11_OU_cov}
 For all $0<t<T$ and $a,b\in\RR$ we have
  \begin{align*}
 \Cov(U_t^{\rm av},U^{a}_{t}) & =\frac{\sigma^2}{q}\,\frac{\sinh(q(T-t))\sinh(qt)}{\sinh(qT)}\,,\\
 \Cov(U_t^{\rm ir},U^{a}_{t}) & =\frac{\sigma^2}{q}\,\ee^{-q(T-t)}\sinh(q(T-t))\left(qt+\log\left(\frac{\sinh(qT)}{\sinh(q(T-t))}\right)\right),\\
 \Cov(U_t^{\rm st},U^{a}_{t}) & =\frac{\sigma^2}{q}(\ee^{qt}-1)\frac{\sinh(q(T-t))}{\sinh(qT)}\,.
 \end{align*}
 \end{prop}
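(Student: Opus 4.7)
The plan is to exploit the identity $\Cov(U_t^{\rm br},U^{a}_t)=\Cov(U_t^{\rm br},U^{0}_t)$ (since $U^{a}_t=a\ee^{qt}+U^{0}_t$ and deterministic shifts do not affect covariances), and then treat the three bridge versions separately, since in each case the centered process $U_t^{\rm br}-\Exp(U_t^{\rm br})$ has a concrete representation in terms of the driving Wiener process $W$.

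For the anticipative version, $U_t^{\rm av}-\Exp(U_t^{\rm av})=U^{0}_t-\frac{\sinh(qt)}{\sinh(qT)}U^{0}_T$, so bilinearity together with the known covariance $\Cov(U^{0}_s,U^{0}_t)=\frac{\sigma^2\ee^{qt}}{q}\sinh(qs)$ for $s\le t$ reduces the result to the elementary identity $\ee^{qt}\sinh(qT)-\ee^{qT}\sinh(qt)=\sinh(q(T-t))$. For the space-time transform, I will use the representation $U^{0}_t=\sigma\int_0^t \ee^{q(t-s)}\,\dd W_s$ and the basic fact that for any deterministic $f\in L^2[0,t]$ and any $u>0$,
\[
\Cov\!\left(W_u,\int_0^t f(s)\,\dd W_s\right)=\int_0^{\min(u,t)} f(s)\,\dd s.
\]
Applied with $u=\kappa_T^*(t)$ and $f(s)=\sigma\ee^{q(t-s)}$, and using the crucial inequality $\kappa_T^*(t)\ge t$ provided by Lemma \ref{lemma_transform} (so that $\min(\kappa_T^*(t),t)=t$), the calculation reduces to evaluating $\int_0^t \ee^{q(t-s)}\,\dd s=\frac{\ee^{qt}-1}{q}$ and then simplifying the prefactor $\ee^{qt}\frac{\kappa(T)-\kappa(t)}{\kappa(T)}$, which by direct algebra equals $\frac{\sinh(q(T-t))}{\sinh(qT)}$.

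The integral representation is the computationally more delicate case. Here $U_t^{\rm ir}-\Exp(U_t^{\rm ir})=\sigma\int_0^t\frac{\sinh(q(T-t))}{\sinh(q(T-s))}\,\dd W_s$, and Itô isometry combined with the integral representation of $U^{0}_t$ gives
\[
\Cov(U_t^{\rm ir},U^{0}_t)=\sigma^2\sinh(q(T-t))\,\ee^{qt}\int_0^t\frac{\ee^{-qs}}{\sinh(q(T-s))}\,\dd s.
\]
The main obstacle is the evaluation of the integral. I will substitute $u=T-s$ to rewrite it as $\ee^{-qT}\int_{T-t}^{T}\frac{\ee^{qu}}{\sinh(qu)}\,\dd u$ and then exploit the identity $\frac{\ee^{qu}}{\sinh(qu)}=1+\coth(qu)$, whose antiderivative is $u+\frac{1}{q}\log\sinh(qu)$. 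Evaluating between $T-t$ and $T$ yields $t+\frac{1}{q}\log\frac{\sinh(qT)}{\sinh(q(T-t))}$, and collecting the prefactor $\sigma^2\sinh(q(T-t))\ee^{qt}\ee^{-qT}=\frac{\sigma^2}{q}\ee^{-q(T-t)}\sinh(q(T-t))\cdot q$ produces the claimed formula.

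Altogether, each of the three covariances reduces to a short computation; the only non-routine step is the $\frac{\ee^{qu}}{\sinh(qu)}=1+\coth(qu)$ trick needed for the integral representation. No use of anything beyond Itô isometry, bilinearity of $\Cov$, the covariance of the Ornstein-Uhlenbeck process, and Lemma \ref{lemma_transform} is required.
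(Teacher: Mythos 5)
Your proposal is correct and follows essentially the same route as the paper: reduce to covariances with the centered process $U^0_t$, use bilinearity, the It\^o isometry and the explicit stochastic parts of the three bridge versions, invoking $\kappa_T^\ast(t)\ge t$ from Lemma \ref{lemma_transform} for the space-time transform. The only (cosmetic) difference is in evaluating $\int_0^t\frac{\ee^{-qs}}{\sinh(q(T-s))}\,\dd s$: you substitute $u=T-s$ and use $\frac{\ee^{qu}}{\sinh(qu)}=1+\coth(qu)$, whereas the paper substitutes $r=\ee^{-2q(T-s)}$ and uses partial fractions for $\frac{1}{r(1-r)}$; both yield the same antiderivative.
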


 \begin{proof}
 For the anticipative version we have
  \begin{align*}
 \Cov(U_t^{\rm av},U^{a}_t) & = \Cov\left(U^{0}_t-\frac{\sinh(qt)}{\sinh(qT)}U^{0}_T,U^{0}_t\right)\\
 & = \sigma^2\frac{\ee^{qt}}{q}\sinh(qt)-\frac{\sinh(qt)}{\sinh(qT)}\sigma^2\frac{\ee^{qT}}{q}\sinh(qt)\\
 & = \frac{\sigma^2}{q}\,\frac{\sinh(qt)}{\sinh(qT)}\left(\ee^{qt}\sinh(qT)-\ee^{qT}\sinh(qt)\right)\\
 & = \frac{\sigma^2}{q}\,\frac{\sinh(q(T-t))\sinh(qt)}{\sinh(qT)}.
 \end{align*}
 Using Lemma \ref{lemma_transform} we get for the space-time transform
 \begin{align*}
 \Cov(U_t^{\rm st},U^{a}_t) & = \Cov\left(\sigma \ee^{qt}\frac{\kappa(T)-\kappa(t)}{\kappa(T)}W_{\kappa_T^\ast(t)},
                                                   U^{0}_t\right)\\
 & = \sigma^2\ee^{2qt}\frac{\kappa(T)-\kappa(t)}{\kappa(T)}\Cov\left(\int_0 ^{\kappa_T^\ast(t)}\dd W_s,\int_0 ^t\ee^{-qs}\,\dd W_s\right)\\
 & = \sigma^2\ee^{2qt}\frac{\kappa(T)-\kappa(t)}{\kappa(T)}\int_0 ^t\ee^{-qs}\,\dd s\\
 & = \sigma^2\ee^{2qt}\frac{\kappa(T)-\kappa(t)}{\kappa(T)}\frac{1-\ee^{-qt}}{q}\\
 & = \frac{\sigma^2}{q}(\ee^{qt}-1)\frac{\sinh(q(T-t))}{\sinh(qT)}\,.
 \end{align*}
 Finally, we get for the integral representation
 \begin{align*}
 \Cov(U_t^{\rm ir},U^{a}_t) & = \Cov\left(\sigma\int_0^t\frac{\sinh(q(T-t))}{\sinh(q(T-s))}\,\dd W_s,\sigma\int_0^t\ee^{q(t-s)}\,\dd W_s\right)\\
 & = \sigma^2\ee^{qt}\sinh(q(T-t))\int_0^t\frac{\ee^{-qs}}{\sinh(q(T-s))}\,\dd s\\
 & = 2\sigma^2\ee^{-q(T-t)}\sinh(q(T-t))\int_0^t\frac{1}{1-\ee^{-2q(T-s)}}\,\dd s\\
 & = \frac{\sigma^2}{q}\ee^{-q(T-t)}\sinh(q(T-t))\int_{\ee^{-2qT}}^{\ee^{-2q(T-t)}}\frac{1}{r(1-r)}\,\dd r\\
 & = \frac{\sigma^2}{q}\ee^{-q(T-t)}\sinh(q(T-t))\left(\log\left(\frac{\ee^{-2q(T-t)}}{\ee^{-2qT}}\right)
                                                       -\log\left(\frac{1-\ee^{-2q(T-t)}}{1- \ee^{-2qT} }\right)\right)\\
& =\frac{\sigma^2}{q}\,\ee^{-q(T-t)}\sinh(q(T-t))\left(qt+\log\left(\frac{\sinh(qT)}{\sinh(q(T-t))}\right)\right),
 \end{align*}
 concluding the proof.
 \end{proof}

Proposition \ref{prop11_OU_cov} also shows that, even though the three bridge versions have the same law
  on $(C[0,T],\cB(C[0,T]))$, their joint laws together with the Ornstein-Uhlenbeck process $U^{a}$ are different.

Our aim is to analyze the sample path deviations of the Ornstein-Uhlenbeck bridge versions to the original Ornstein-Uhlenbeck
 process $(U^{a}_t)_{t\in[0,T)}$ (starting in $a$) by calculating and comparing expected quadratic
 path deviations $\Exp\left(\int_0^T (U^{a}_t-U_{t}^{\rm br})^2\,\dd t\right).$

Simulation studies show the same qualitative behavior of typical sample path deviations of the anticipative version, the integral representation and the space-time transform of the Ornstein-Uhlenbeck bridge as we have
for the Wiener bridge, see the upper row of Figure \ref{Figure4}.
\begin{figure}[h]
\includegraphics[scale=0.25]{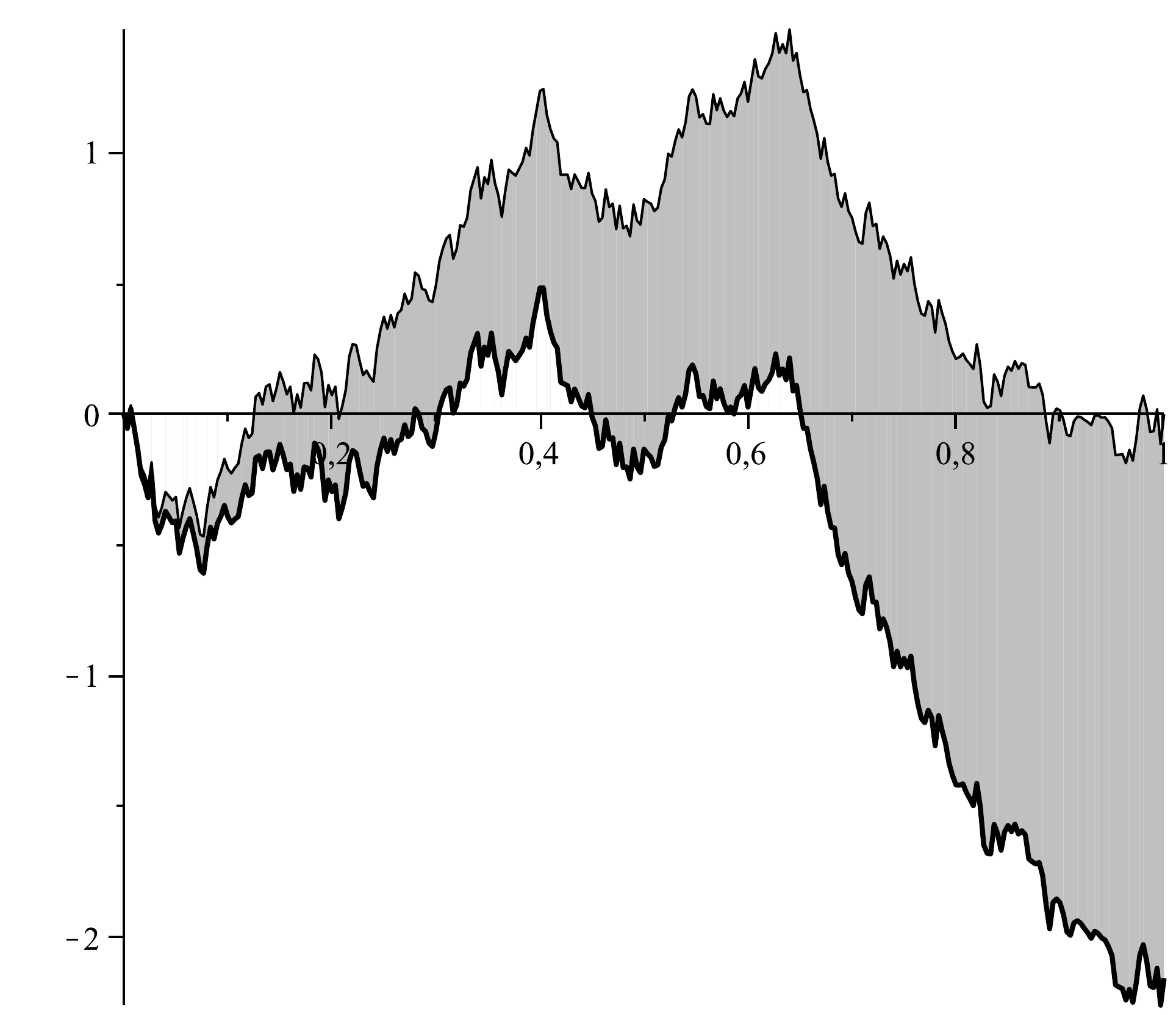}
\includegraphics[scale=0.25]{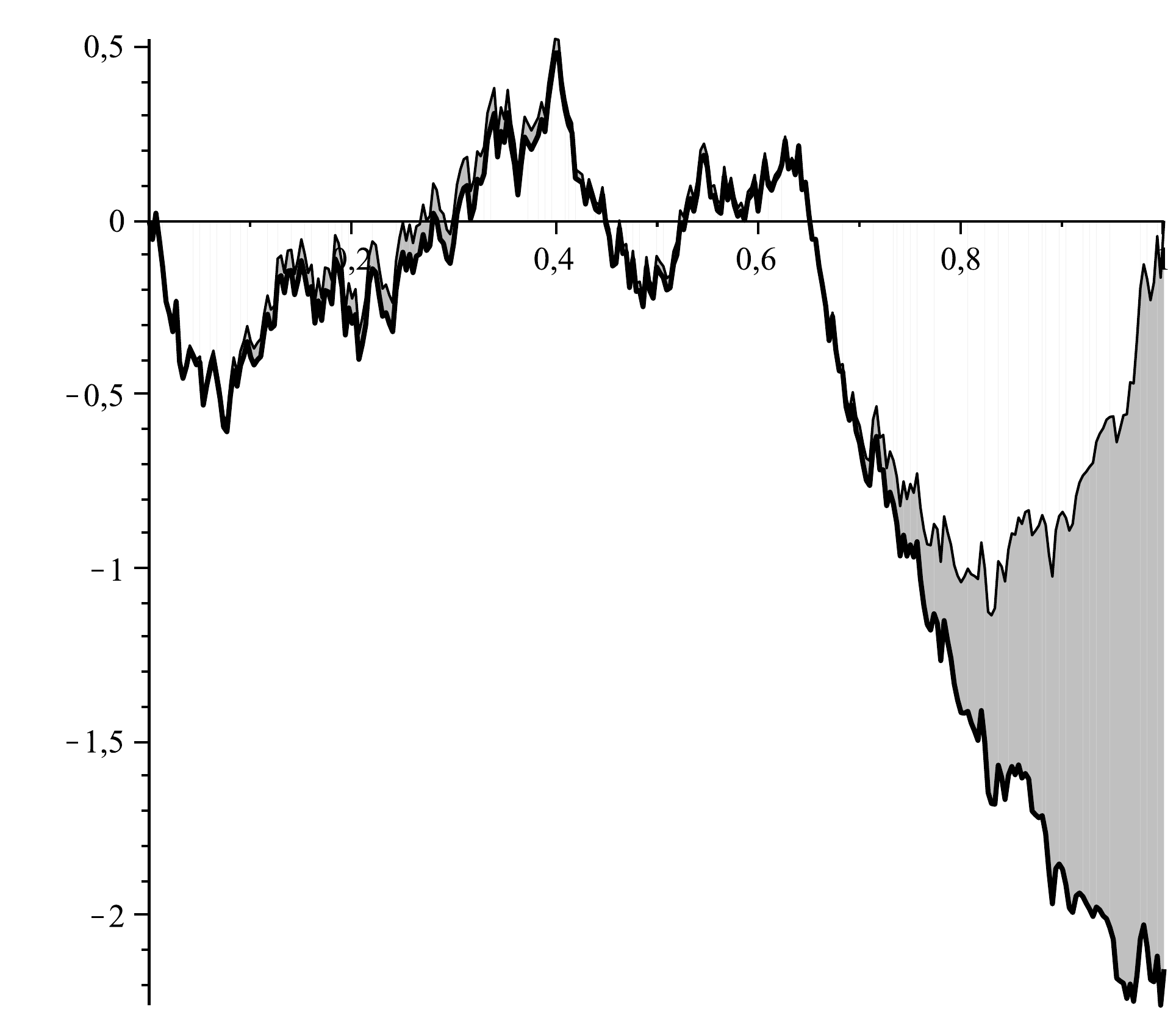}
\includegraphics[scale=0.25]{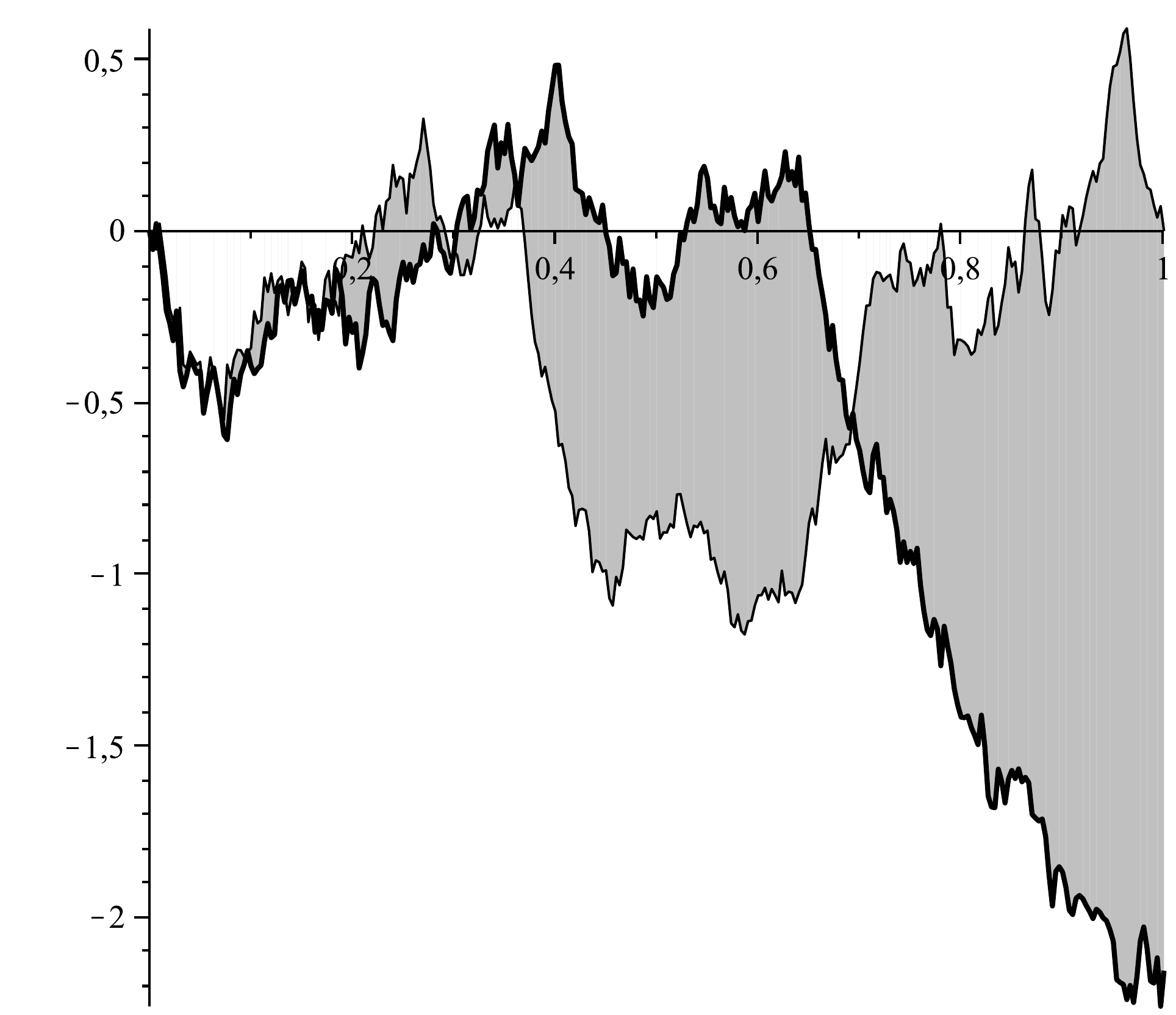}
\includegraphics[scale=0.25]{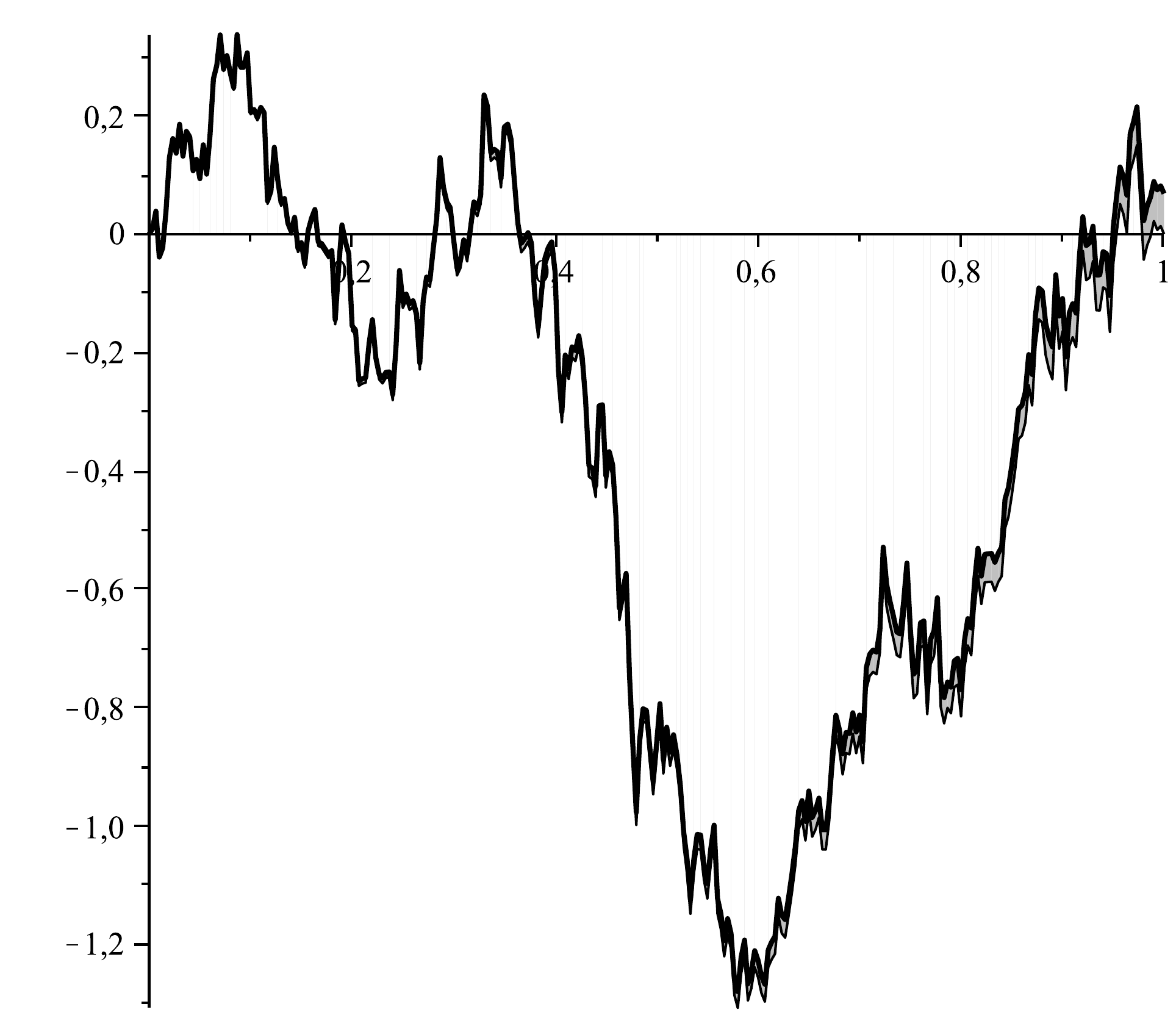}
\includegraphics[scale=0.25]{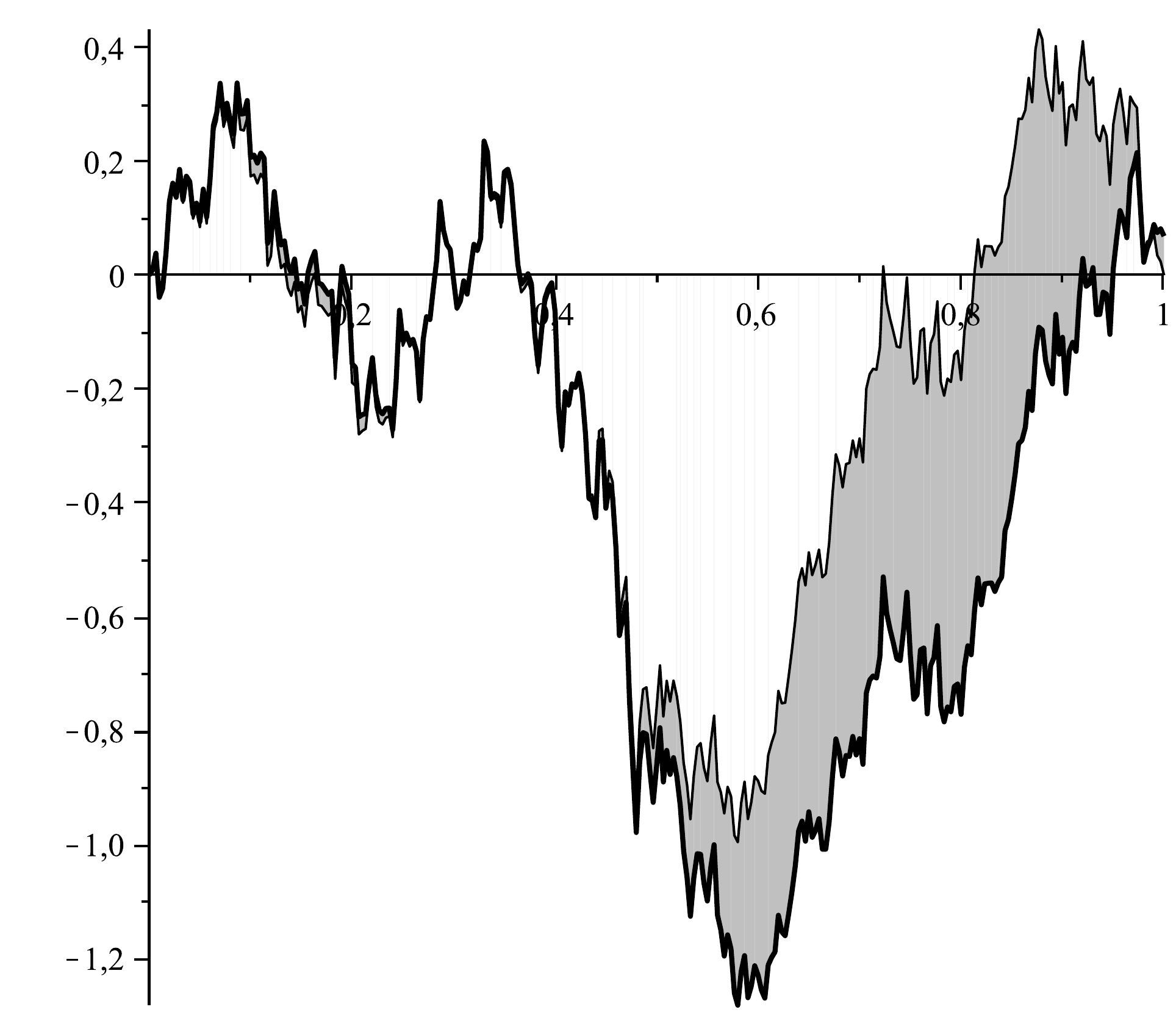}
\includegraphics[scale=0.25]{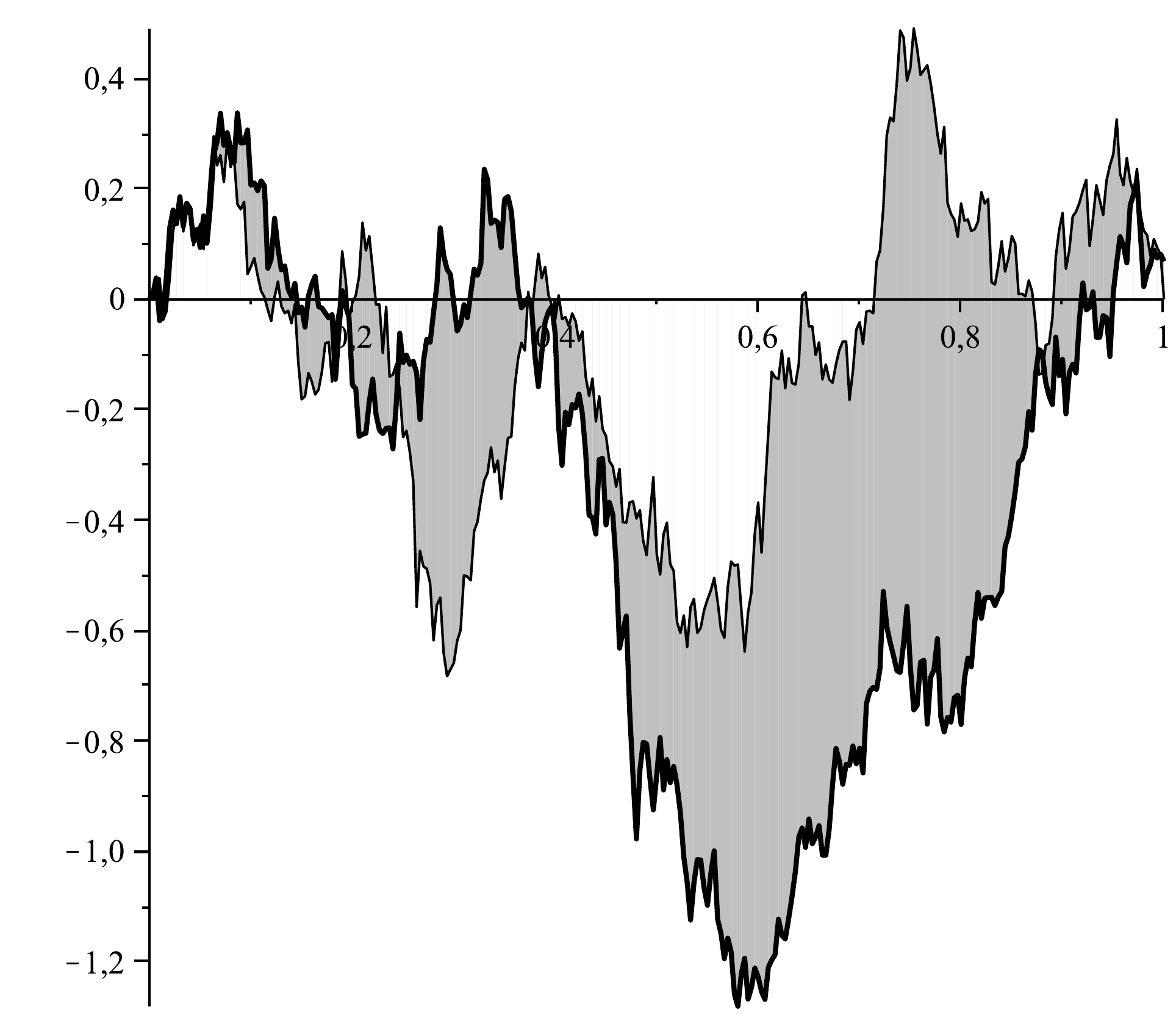}
\caption{\small Two sample paths of the Ornstein-Uhlenbeck process with $\sigma=1$ and $q=-1$
 (upper row, thick line), respectively $q=2$ (lower row, thick line) and their deviations from
 the anticipative version (left column), the integral representation (middle column),
 and the space-time transform (right column) of the Ornstein-Uhlenbeck bridge from $0$ to $0$
 over the time-interval $[0,1]$.}
\label{Figure4}
\end{figure}
Note that in general the deviation from the space-time transform bridge version is hard to compare
 with the other deviations, since $(U_{t}^{\rm st})_{t\in[t^\ast,T)}$ depends on the non-visible part
 $(U^{a}_t)_{t\in[T,\infty)}$ of the Ornstein-Uhlenbeck process, where $t^*\in(0,T)$ defined as follows.
Due to the strict monotonicity of $\kappa_{T}^\ast$ and $\lim_{t\uparrow T}\kappa_{T}^\ast(t)=\infty$ there
 is a unique $t^\ast\in(0,T)$ such that $\kappa_{T}^\ast(t^\ast)=T$, \
 see the analysis of the time-transform $\kappa_{T}^\ast$ in Lemma \ref{lemma_transform}.
From simulation studies we also get the above typical behavior is again reversed in case the
 endpoint $U^{a}_T$ of the Ornstein-Uhlenbeck sample path is close to the prescribed endpoint $b$ of
  its bridge, namely, for such a sample path of the Ornstein-Uhlenbeck process the deviation from
 its anticipative bridge version is smaller than from its integral representation of the bridge,
 see the lower row of Figure \ref{Figure4}.

Our aim is again to give quantitative answers to this qualitative behavior observed from simulation studies
  by studying the path deviations on $[0,T)$:
\begin{align}
& U^{a}_{t}-U_{t}^{\rm av}
   =(a\,\ee^{qT}-b)\,\frac{\sinh(qt)}{\sinh(qT)}+\frac{\sinh(qt)}{\sinh(qT)}\,U^{0}_{T},\nonumber\\
& U^{a}_{t}-U_{t}^{\rm ir}
    =(a\,\ee^{qT}-b)\,\frac{\sinh(qt)}{\sinh(qT)}
       +\sigma\int_{0}^t\left(\ee^{q(t-s)}-\frac{\sinh(q(T-t))}{\sinh(q(T-s))}\right)\,\dd W_{s},\label{oudevs}\\
& U^{a}_{t}-U_{t}^{\rm st}
    =(a\,\ee^{qT}-b)\,\frac{\sinh(qt)}{\sinh(qT)}
     +\left(U^{0}_{t}-\sigma\,\ee^{qt}\,\frac{\kappa(T)-\kappa(t)}{\kappa(T)}
          \,W_{\frac{\kappa(t)\kappa(T)}{\kappa(T)-\kappa(t)}}\right)\nonumber.
\end{align}
Note that all path deviations depend only on the transformed difference $(a\,\ee^{qT}-b)$
 of starting and endpoint of the bridge.
Hence in the sequel without loss of generality we can and will assume that $a=0$.
For simplicity we will concentrate on calculating the Gauss distributions of path deviations and
 to compare the expected quadratic path deviations only.

\subsection{Gauss distribution of path deviations and expected quadratic path deviations}

First we prove that path deviations have Gauss distribution.

\begin{prop}\label{prop7}
Let \ $(U_{t}^{\rm br})_{t\in[0,T]}$ \ be an Ornstein-Uhlenbeck bridge from \ $0$ \ to \ $b$ \
 over the time-interval \ $[0,T]$, \ where \ $b\in\RR$.
Then for all \ $t\in[0,T)$, the path deviation \ $U^{0}_t - U_{t}^{\rm br}$ \ is normally distributed
 with mean
 \[
    \Exp(U^{0}_t-U_{t}^{\rm br})=-b\,\frac{\sinh(qt)}{\sinh(qT)},
 \]
 and with variance
 \begin{align}\label{help_cond_gauss_ou1}
  & \Var(U^{0}_t-U_{t}^{\rm av})
      = \sigma^2\frac{\ee^{qT}}{q}\,\frac{\sinh^2(qt)}{\sinh(qT)},\\[1mm] \label{help_cond_gauss_ou2}
  & \Var(U^{0}_t-U_{t}^{\rm ir})
    = \frac{\sigma^2}{q}\left(\sinh(qt)\left(\ee^{qt}+\frac{\sinh(q(T-t))}{\sinh(qT)}\right)\right.\\\nonumber
  & \phantom{\Var(U^{0}_t-U_{t}^{\rm ir})=\sigma^2\,}\quad\left.-2\ee^{-q(T-t)}\sinh(q(T-t))
      \left(qt+\log\frac{\sinh(qT)}{\sinh(q(T-t))}\right)\right),\\[2mm]\label{help_cond_gauss_ou3}
  & \Var(U^{0}_t-U_{t}^{\rm st})
    = \frac{\sigma^2}{q}\left(\sinh(qt)\left(\ee^{qt}+\frac{\sinh(q(T-t))}{\sinh(qT)}\right)\right.\\\nonumber
  &\phantom{\Var(U^{0}_t-U_{t}^{\rm st}) = \frac{\sigma^2}{q}\;\;\;\;}
     \left.+ 2(1-\ee^{qt})\,\frac{\sinh(q(T-t))}{\sinh(qT)}\right).
 \end{align}
\end{prop}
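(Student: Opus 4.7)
The plan is to handle Gaussianity, the mean and the three variances separately, relying on the decompositions in \eqref{oudevs} together with the covariances already computed in Propositions \ref{prop6} and \ref{prop11_OU_cov}.

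With \(a=0\), each path deviation in \eqref{oudevs} is the deterministic shift \(-b\,\sinh(qt)/\sinh(qT)\) plus a centred Gaussian random variable: a scalar multiple of \(U^{0}_{T}\) in the anticipative case, a Wiener--It\^o integral with deterministic integrand in the integral case, and a linear combination of \(U^{0}_{t}\) and \(W_{\kappa^{*}_{T}(t)}\) in the space-time case. This immediately yields both the normality and the mean formula \(\Exp(U^{0}_{t}-U^{\rm br}_{t})=-b\sinh(qt)/\sinh(qT)\).

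For the three variances my plan is to apply the bilinear identity
\[
\Var(U^{0}_{t}-U^{\rm br}_{t})=\Var(U^{0}_{t})+\Var(U^{\rm br}_{t})-2\Cov(U^{0}_{t},U^{\rm br}_{t})
\]
with the OU variance \(\Var(U^{0}_{t})=\sigma^{2}\ee^{qt}\sinh(qt)/q\), the bridge variance \(\Var(U^{\rm br}_{t})=\sigma^{2}\sinh(qt)\sinh(q(T-t))/(q\sinh(qT))\) from Proposition \ref{prop6}, and the three covariances from Proposition \ref{prop11_OU_cov}. The first two terms always combine to \(\sigma^{2}/q\cdot\sinh(qt)(\ee^{qt}+\sinh(q(T-t))/\sinh(qT))\), which is exactly the common bracket appearing in both \eqref{help_cond_gauss_ou2} and \eqref{help_cond_gauss_ou3}. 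For the anticipative version the computation short-circuits: the random part of \(U^{0}_{t}-U^{\rm av}_{t}\) is simply \(\sinh(qt)/\sinh(qT)\cdot U^{0}_{T}\), whose variance is \((\sinh(qt)/\sinh(qT))^{2}\cdot\sigma^{2}\ee^{qT}\sinh(qT)/q\), giving \eqref{help_cond_gauss_ou1} at once.

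The main obstacle is purely algebraic bookkeeping: matching the closed-form covariances from Proposition \ref{prop11_OU_cov} — in particular the term \(qt+\log(\sinh(qT)/\sinh(q(T-t)))\) in the integral representation and the factor \(\ee^{qt}-1\) in the space-time case — against the stated variance formulas. Once the common factor structure above is recognised, substituting the covariances directly produces \eqref{help_cond_gauss_ou2} and \eqref{help_cond_gauss_ou3} without further manipulation, and no input beyond the previously established propositions is required.
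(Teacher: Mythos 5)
Your proof is correct, but it reaches the three variances by a genuinely different route than the paper. The paper's proof of Proposition \ref{prop7} is self-contained: it computes $\Var(U^{0}_t-U_{t}^{\rm ir})$ by expanding the square of the integrand $\ee^{q(t-s)}-\sinh(q(T-t))/\sinh(q(T-s))$ under the It\^o isometry and evaluating the resulting integrals, and it computes $\Var(U^{0}_t-U_{t}^{\rm st})$ by expanding the variance of the linear combination of $U^{0}_t$ and $W_{\kappa_T^\ast(t)}$ with the help of Lemma \ref{lemma_transform}; it never invokes Proposition \ref{prop11_OU_cov}. You instead recycle $\Var(U^{0}_t)=\sigma^{2}\ee^{qt}\sinh(qt)/q$, the bridge variance from \eqref{oubridgecov}, and the three covariances of Proposition \ref{prop11_OU_cov} through the bilinear identity $\Var(X-Y)=\Var(X)+\Var(Y)-2\Cov(X,Y)$. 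Since Propositions \ref{prop6} and \ref{prop11_OU_cov} precede Proposition \ref{prop7} and are proved independently of it, there is no circularity; your observation that the first two terms always combine to $\frac{\sigma^{2}}{q}\sinh(qt)\bigl(\ee^{qt}+\sinh(q(T-t))/\sinh(qT)\bigr)$ is exactly right, and substituting the covariances reproduces \eqref{help_cond_gauss_ou2} and \eqref{help_cond_gauss_ou3} verbatim. For \eqref{help_cond_gauss_ou1} your short-circuit via $\Var\bigl(\tfrac{\sinh(qt)}{\sinh(qT)}U^{0}_T\bigr)$ is the paper's own argument; the bilinear route also works there but needs the extra identity $\ee^{qt}\sinh(qT)-\sinh(q(T-t))=\ee^{qT}\sinh(qt)$. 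What your approach buys is economy — the hard integrals are done once, in Proposition \ref{prop11_OU_cov}, rather than essentially repeated — at the cost of making Proposition \ref{prop7} dependent on the earlier covariance computations; notably, the paper itself arrives at representations of the variances very close to the ones your method produces, but only later, in \eqref{help_exp_quad_ou_path_dev1} and \eqref{help_exp_quad_ou_path_dev2} within the proof of Proposition \ref{prop8}.
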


\begin{proof}
With $a=0$, by \eqref{oudevs}, for every $0\leq t<T$ the path deviation $U^{0}_t-U_{t}^{\rm br}$ is
 normally distributed with mean $\Exp(U^{0}_t-U_{t}^{\rm br})=-b\,\frac{\sinh(qt)}{\sinh(qT)}$
 and with variance
$$
 \Var(U^{0}_t-U_{t}^{\rm av})
   = \frac{\sinh^2(qt)}{\sinh^2(qT)}\,\Var(U^{0}_T)=\sigma^2\frac{\ee^{qT}}{q}\,\frac{\sinh^2(qt)}{\sinh(qT)},
$$
and
\begin{align*}
 \Var(U^{0}_t-U_{t}^{\rm ir})
  &  = \sigma^2\int_{0}^t\left(\ee^{q(t-s)}-\frac{\sinh(q(T-t))}{\sinh(q(T-s))}\right)^2\,\dd s\\
  &  = \sigma^2\int_{0}^t\left[\ee^{2q(t-s)}-2\ee^{q(t-s)}\,\frac{\sinh(q(T-t))}{\sinh(q(T-s))}
        +\frac{\sinh^2(q(T-t))}{\sinh^2(q(T-s))}\right]\,\dd s\\
 & =\sigma^2\left(\frac1{2q}\,(\ee^{2qt}-1)
  -2\int_{0}^t\frac{\ee^{q(T-s)}-\ee^{-q(T-2t+s)}}{\ee^{q(T-s)}-\ee^{-q(T-s)}}\,\dd s\right.\\
 & \phantom{=\sigma^2}\quad\left.+\sinh^2(q(T-t))\int_{0}^t\frac1{\sinh^2(q(T-s))}\,\dd s\right)\\
 & =\sigma^2\left(\frac1{2q}\,(\ee^{2qt}-1)
    -\frac{2(1-\ee^{-2q(T-t)})}{q}\int_{\ee^{q(T-t)}}^{\ee^{qT}}\frac{v}{v^2-1}\,\dd v\right.\\
 & \phantom{=\sigma^2}\quad+\sinh^2(q(T-t))\,\frac1q\,
         \left(\frac{\cosh(q(T-t))}{\sinh(q(T-t))}-\frac{\cosh(qT)}{\sinh(qT)}\right)\Bigg)\\
 & =\sigma^2\left(\frac1{2q}\,(\ee^{2qt}-1)-\frac{(1-\ee^{-2q(T-t)})}{q}\log\frac{\ee^{2qT}-1}{\ee^{2q(T-t)}-1}\right.\\
 & \phantom{=\sigma^2}\quad\left.+\sinh^2(q(T-t))\,\frac1q\,\frac{\sinh(qt)}{\sinh(q(T-t))\sinh(qT)}\right),
\end{align*}
 which yields \eqref{help_cond_gauss_ou2}.
Using Lemma \ref{lemma_transform} we get
\begin{align*}
 \Var(U^{0}_t-U_{t}^{\rm st})
  & = \Var\left(U^{0}_t-\sigma\,\ee^{qt}
      \,\frac{\kappa(T)-\kappa(t)}{\kappa(T)}\,W_{\frac{\kappa(t)\kappa(T)}{\kappa(T)-\kappa(t)}}\right)\\
& =\Var(U^{0}_t)+\sigma^2\,
    \ee^{2qt}\left(\frac{\kappa(T)-\kappa(t)}{\kappa(T)}\right)^2\Var\left(W_{\frac{\kappa(t)\kappa(T)}{\kappa(T)-\kappa(t)}}\right)\\
& \phantom{=}-2\sigma\,\ee^{qt}\,
  \frac{\kappa(T)-\kappa(t)}{\kappa(T)}\,\Cov\left(U^{0}_t,W_{\frac{\kappa(t)\kappa(T)}{\kappa(T)-\kappa(t)}}\right)\\
&  =\sigma^2\frac{\ee^{qt}}{q}\,\sinh(qt)+\sigma^2\,\ee^{2qt}\kappa(t)\,\frac{\kappa(T)-\kappa(t)}{\kappa(T)}\\
& \phantom{=}-2\sigma^2\,\ee^{qt}\,
   \frac{\kappa(T)-\kappa(t)}{\kappa(T)}\int_{0}^{\min\{t,\kappa_{T}^\ast(t)\}}\ee^{q(t-s)}\,\dd s\\
& =\sigma^2\frac{\ee^{qt}}{q}
  \left(\sinh(qt)+\ee^{qt}\,\frac{1-\ee^{-2qt}}{2}\,\frac{\ee^{-2qt}-\ee^{-2qT}}{1-\ee^{-2qT}}\right.\\
& \phantom{=\sigma^2\frac{\ee^{qt}}{q}}\quad\left.-2\,\frac{\ee^{-2qt}-\ee^{-2qT}}{1-\ee^{-2qT}}\,(\ee^{qt}-1)\right),
\end{align*}
 which yields \eqref{help_cond_gauss_ou3}.
\end{proof}

Note that in the proof of Proposition \ref{prop8} below we will give different representations of the
 variances \ $\Var(U^{0}_t-U_{t}^{\rm br})$ \ calculated in Proposition \ref{prop7}.

Next we compare the second moments \ $\Exp((U^{0}_t-U_t^{\rm br})^2)$ \ of the path deviations
 \ $U^{0}_t-U_t^{\rm br}$.
In view of \eqref{quaddist} we have to compare the variances of path deviations for different versions of
 the Ornstein-Uhlenbeck bridge, since the mean function of path deviation is the same for all versions.

\begin{prop}\label{prop8}
Let \ $(U_{t}^{\rm br})_{t\in[0,T]}$ \ be an Ornstein-Uhlenbeck bridge from \ $0$ \ to \ $b$ \
 over the time-interval \ $[0,T]$, \ where \ $b\in\RR$.
Then for all \ $t\in(0,T)$, \ we have
\begin{equation}\label{ouquaddist}
 \Exp\big((U^{0}_t-U_{t}^{\rm ir})^2\big)
  <\begin{cases}
     \Exp\big((U^{0}_t-U_{t}^{\rm st})^2\big)<\Exp\big((U^{0}_t-U_{t}^{\rm av})^2\big),&\text{ if } \; q>0,\\[1mm]
     \Exp\big((U^{0}_t-U_{t}^{\rm av})^2\big)<\Exp\big((U^{0}_t-U_{t}^{\rm st})^2\big),&\text{ if } \;q<0.
   \end{cases}
\end{equation}
\end{prop}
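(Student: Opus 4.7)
Since $\Exp(U^0_t - U^{\rm br}_t) = -b\sinh(qt)/\sinh(qT)$ is the same for all three bridge versions by Proposition \ref{prop7}, the identity $\Exp(Y^2) = \Var(Y) + (\Exp Y)^2$ reduces the ordering asserted in \eqref{ouquaddist} to the corresponding ordering of the variances \eqref{help_cond_gauss_ou1}--\eqref{help_cond_gauss_ou3}. The plan is therefore to establish the chain of inequalities by two pairwise variance comparisons.

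For $\Var(U^0_t - U^{\rm av}_t)$ versus $\Var(U^0_t - U^{\rm st}_t)$, I would combine the identity $e^{qT}\sinh(qt) - e^{qt}\sinh(qT) = -\sinh(q(T-t))$ with $e^{qt} - \sinh(qt) = \cosh(qt)$ to rewrite the difference in the closed form
\[
\Var(U^0_t - U^{\rm av}_t) - \Var(U^0_t - U^{\rm st}_t) = \frac{2\sigma^2\,\sinh(q(T-t))(\cosh(qt)-1)}{q\sinh(qT)}.
\]
Since $\cosh(qt) - 1 > 0$ for $t \in (0,T)$, and since $\sinh(q(T-t))/(q\sinh(qT))$ is strictly positive when $q > 0$ and strictly negative when $q < 0$, this yields exactly the sign split in \eqref{ouquaddist} for the "av" versus "st" pair.

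For $\Var(U^0_t - U^{\rm ir}_t)$ versus $\Var(U^0_t - U^{\rm st}_t)$, I would exploit that the expressions \eqref{help_cond_gauss_ou2} and \eqref{help_cond_gauss_ou3} share the common summand $\frac{\sigma^2}{q}\sinh(qt)\bigl(e^{qt} + \sinh(q(T-t))/\sinh(qT)\bigr)$. Subtracting, the difference reduces to
\[
\Var(U^0_t - U^{\rm st}_t) - \Var(U^0_t - U^{\rm ir}_t) = \frac{2\sigma^2\sinh(q(T-t))}{q}\left(\frac{1-e^{qt}}{\sinh(qT)} + e^{-q(T-t)}\Bigl(qt + \log\frac{\sinh(qT)}{\sinh(q(T-t))}\Bigr)\right).
\]
The prefactor $\sinh(q(T-t))/q$ is strictly positive for every $q \ne 0$, so the desired inequality is equivalent to strict positivity of the bracket on $(0,T)$ for every $q \ne 0$. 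I would verify this either by expanding $\log(\sinh(qT)/\sinh(q(T-t)))$ as a power series in $1-\sinh(q(T-t))/\sinh(qT)$ and reducing to the elementary estimate $\log(1+z)\le z$ used in the remark after Proposition \ref{correlation}, or by observing that the bracket vanishes at $t=0$ together with its first derivative and then proving strict positivity of the second derivative for $t\in(0,T)$ uniformly in the sign of $q$.

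The main obstacle is precisely the positivity of this bracket: exponential and logarithmic contributions have opposite signs in each regime of $q$ and must be weighted correctly, and the bookkeeping must remain valid for both $q>0$ and $q<0$. Once this uniform positivity is established, the two pairwise results combine immediately: for $q>0$ we obtain $\Var(U^0_t - U^{\rm ir}_t) < \Var(U^0_t - U^{\rm st}_t) < \Var(U^0_t - U^{\rm av}_t)$, while for $q<0$ the "av" and "st" roles are swapped and we obtain the second chain in \eqref{ouquaddist}.
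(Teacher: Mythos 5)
Your reduction to variances and your treatment of the av--st pair are correct and coincide with the paper's: the difference is exactly $2\frac{\sigma^2}{q}\frac{\sinh(q(T-t))(\cosh(qt)-1)}{\sinh(qT)}$, whose sign flips with $q$. Your formula for $\Var(U^0_t-U^{\rm st}_t)-\Var(U^0_t-U^{\rm ir}_t)$ is also algebraically correct, and the positivity of your bracket is true (it is equivalent to the inequality the paper proves for $q>0$). The problem is logical: for $q<0$ the claimed chain is $\mathrm{ir}<\mathrm{av}<\mathrm{st}$, but your two pairwise comparisons give only $\mathrm{av}<\mathrm{st}$ and $\mathrm{ir}<\mathrm{st}$, from which $\mathrm{ir}<\mathrm{av}$ does not follow. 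You need a third comparison, of ir against av, in the regime $q<0$. The paper arranges this by writing all three variances with the common summand $\ee^{qT}\frac{\sigma^2}{q}\frac{\sinh^2(qt)}{\sinh(qT)}=\Var(U^0_t-U^{\rm av}_t)$ and then proving the two-sided bound
\begin{equation*}
\frac{\sinh(qt)}{\sinh(qT)}-\ee^{-q(T-t)}\left(qt+\log\frac{\sinh(qT)}{\sinh(q(T-t))}\right)
<\begin{cases} 0 &\text{if } q<0,\\[1mm] \dfrac{1-\cosh(qt)}{\sinh(qT)} &\text{if } q>0,\end{cases}
\end{equation*}
where the first case yields $\mathrm{ir}<\mathrm{av}$ directly and the second is exactly your bracket positivity, yielding $\mathrm{ir}<\mathrm{st}$; each case is then completed by the av--st comparison. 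So your argument covers the $q>0$ chain in full (modulo actually proving the positivity, which you only sketch), but leaves the $q<0$ chain genuinely incomplete.

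A secondary point: the positivity of the bracket is the technical heart of the proof and should not be left at the level of two candidate strategies. The paper's route for $q>0$ is to use $\log(1+x)\le x$ to replace the logarithm, reducing the claim to $\tilde g_q(t)>0$ with $\tilde g_q(0)=0$ and $\tilde g_q'(t)=2q\ee^{-2qT+qt}(\ee^{qt}-1)+q^2t\,\ee^{qt}(1-\ee^{-2qT})>0$; note that this derivative argument uses $q>0$ in an essential way (both summands change sign when $q<0$), so whichever method you choose for the missing $q<0$ comparison will need its own sign analysis rather than a formal swap of roles.
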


\begin{proof}
We first give different representations of \ $\Var(U^{0}_t-U_{t}^{\rm ir})$ \ and
 \ $\Var(U^{0}_t-U_{t}^{\rm st})$ \ calculated in Proposition \ref{prop7} that are
 more suitable for comparison.
By Proposition \ref{prop7}, we have
 \begin{align}\label{help_exp_quad_ou_path_dev1}
  \begin{split}
  \Var&(U^{0}_t-U_{t}^{\rm ir}) \\
   &=\frac{\sigma^2}{q}\left[
    2\sinh(q(T-t))\left(\frac{\sinh(qt)}{\sinh(qT)}-\ee^{-q(T-t)}
                      \left(qt+\log\frac{\sinh(qT)}{\sinh(q(T-t))}\right)\right)\right. \\
   &\phantom{=\frac{\sigma^2}{q}\;\left[\right.}\left.
     + \sinh(qt)\left(\ee^{qt}-\frac{\sinh(q(T-t))}{\sinh(qT)}\right)
    \right]\\
   & = 2\,\frac{\sigma^2}{q}\,\sinh(q(T-t))
      \left(\frac{\sinh(qt)}{\sinh(qT)}-\ee^{-q(T-t)}\left(qt+\log\frac{\sinh(qT)}{\sinh(q(T-t))}\right)\right)\\
   &\phantom{ =\;}
      + \ee^{qT}\,\frac{\sigma^2}{q}\,\frac{\sinh^2(qt)}{\sinh(qT)},
  \end{split}
 \end{align}
and
\begin{align}\label{help_exp_quad_ou_path_dev2}
 \begin{split}
  \Var(U^{0}_t-U_{t}^{\rm st})
   & =\frac{\sigma^2}{q}\left(\sinh(qt)\left(\ee^{qt}-\frac{\sinh(q(T-t))}{\sinh(qT)}\right)\right.\\
   & \phantom{=\sigma^2}\quad\left.+2\,\frac{\sinh(q(T-t))}{\sinh(qT)}\,(\sinh(qt)+1-\ee^{qt})\right)\\
   & = \ee^{qT}\,\frac{\sigma^2}{q}\,\frac{\sinh^2(qt)}{\sinh(qT)}+2\,\frac{\sigma^2}{q}
      \,\frac{\sinh(q(T-t))(1-\cosh(qt))}{\sinh(qT)}.
 \end{split}
\end{align}
The advantage of this new representation is that now the variances include the term
 $\ee^{qT}\,\frac{\sigma^2}{q}\,\frac{\sinh^2(qt)}{\sinh(qT)}$ for all versions of path deviations.

For the comparison $\Exp(U^{0}_t-U_{t}^{\rm st})^2$ with $\Exp(U^{0}_t-U_{t}^{\rm av})^2$ we
 consider the continuous function $h_{q}$ on $[0,T]$ defined by
 $$
   h_{q}(t):=2\,\frac{\sigma^2}{q}\,\frac{\sinh(q(T-t))(1-\cosh(qt))}{\sinh(qT)},
      \qquad t\in[0,T].
 $$
Clearly, $h_{q}(t)=0$ if and only if $t\in\{0,T\}$ and further for all $0<t<T$ we have $h_{q}(t)<0$ if $q>0$ and $h_{q}(t)>0$ if $q<0$. In view of \eqref{quaddist} we get
 \begin{equation}\label{oudistpart}
 \Exp\big((U^{0}_t-U_{t}^{\rm st})^2\big)
   \begin{cases}
     <\Exp\big((U^{0}_t-U_{t}^{\rm av})^2\big), & \text{ if } q>0,\\
     >\Exp\big((U^{0}_t-U_{t}^{\rm av})^2\big), & \text{ if } q<0.
   \end{cases}
\end{equation}
For the other comparisons, we show that
 $$
 \frac{\sinh(qt)}{\sinh(qT)}-\ee^{-q(T-t)}\left(qt+\log\frac{\sinh(qT)}{\sinh(q(T-t))}\right)
   <\begin{cases}
      0 &\text{ if }q<0,\\
      \displaystyle\frac{1-\cosh(qt)}{\sinh(qT)}&\text{ if }q>0.
     \end{cases}
$$
Using that
 \[
   \left\vert \frac{\sinh(q(T-t))}{\sinh(qT)}-1\right\vert<1,\qquad t\in(0,T),
 \]
 by $\log(1+x)\leq x$, $\vert x\vert<1$, we have for all $0<t<T$,
\begin{align*}
& \frac{\sinh(qt)}{\sinh(qT)}-\ee^{-q(T-t)}\left(qt+\log\frac{\sinh(qT)}{\sinh(q(T-t))}\right)\\
& \quad=\frac{\sinh(qt)}{\sinh(qT)}+\ee^{-q(T-t)}\left(\log\frac{\sinh(q(T-t))}{\sinh(qT)}-qt\right)\\
& \quad\leq\frac{\sinh(qt)}{\sinh(qT)}+\ee^{-q(T-t)}\,\frac{\sinh(q(T-t))-(1+qt)\sinh(qT)}{\sinh(qT)}\\
& \quad=\frac1{2\sinh(qT)}\left(\ee^{qt}-\ee^{-qt}+1-\ee^{-2q(T-t)}-(1+qt)\left(\ee^{qt}-\ee^{-2qT+qt}\right)\right)\\
& \quad=:\frac1{2\sinh(qT)}\,g_{q}(t).
\end{align*}
For $q<0$ it is enough to show that $g_{q}(t)>0$ for all $0<t<T$. Now
\begin{align*}
g_{q}(t) & =\big(\ee^{-2qT+qt}-\ee^{-2q(T-t)}\big)+\big(1-\ee^{-qt}\big)-qt\big(\ee^{qt}-\ee^{-2qT+qt}\big)\\
& =\ee^{-2q(T-t)}\big(\ee^{-qt}-1\big)+\big(1-\ee^{-qt}\big)-qt\,\ee^{qt}\big(1-\ee^{-2qT}\big)\\
& =\big(\ee^{-2q(T-t)}-1\big)\big(\ee^{-qt}-1\big)+qt\,\ee^{qt}\big(\ee^{-2qT}-1\big)\\
& \leq\big(\ee^{-2qT}-1\big)\big(\ee^{-qt}-1\big)+qt\,\ee^{qt}\big(\ee^{-2qT}-1\big)\\
& =\big(\ee^{-2qT}-1\big)\big(\ee^{-qt}-1+qt\big)+qt\big(\ee^{qt}-1\big)\big(\ee^{-2qT}-1\big),
\end{align*}
which is obviously positive for $q<0$ and $0<t<T$. For $q>0$ we have to show that
 $g_{q}(t)<2-\ee^{qt}-\ee^{-qt}$ for all $0<t<T$. Now
\begin{align*}
2-\ee^{qt}-\ee^{-qt}-g_{q}(t) & =1-\ee^{qt}+\ee^{-2q(T-t)}-\ee^{-2qT+qt}+qt\,\ee^{qt}\big(1-\ee^{-2qT}\big)\\
& =:\tilde g_{q}(t)
\end{align*}
for which $\tilde g_{q}(0)=0$ holds and we have
\begin{align*}
\tilde g'_{q}(t) & =-q\,\ee^{qt}+2q\,\ee^{-2q(T-t)}-q\,\ee^{-2qT+qt}\\
& \quad\,+q\,\ee^{qt}\big(1-\ee^{-2qT}\big)+q^2t\,\ee^{qt}\big(1-\ee^{-2qT}\big)\\
& =2q\,\ee^{-2qT+qt}\left(\ee^{qt}-1\right)+q^2t\,\ee^{qt}\big(1-\ee^{-2qT}\big)>0
\end{align*}
for $q>0$ and $0<t<T$, which completes the proof.
Hence, by \eqref{quaddist}, \eqref{help_cond_gauss_ou1}, \eqref{help_exp_quad_ou_path_dev1},
 \eqref{help_exp_quad_ou_path_dev2} and \eqref{oudistpart}, we get \eqref{ouquaddist}.
\end{proof}

Moreover, by \eqref{ouquaddist}, the expected quadratic path deviations satisfy the following
 inequalities:
$$
 \int_{0}^T\Exp\big((U^{0}_t-U_{t}^{\rm ir})^2\big)\,\dd t
  <
  \begin{cases}
  \displaystyle\int_{0}^T\Exp\big((U^{0}_t-U_{t}^{\rm st})^2\big)\,\dd t
         <\int_{0}^T\Exp\big((U^{0}_t-U_{t}^{\rm av})^2\big)\,\dd t&\text{ if }q>0,\\
  \displaystyle\int_{0}^T\Exp\big((U^{0}_t-U_{t}^{\rm av})^2\big)\,\dd t
         <\int_{0}^T \Exp\big((U^{0}_t-U_{t}^{\rm st})^2\big)\,\dd t&\text{ if }q<0.
 \end{cases}$$

In the next theorem we get more explicit representations of the expected quadratic path deviations.

\begin{thm}\label{prop10}
Let \ $(U_{t}^{\rm br})_{t\in[0,T]}$ \ be an Ornstein-Uhlenbeck bridge from \ $0$ \ to \ $b$ \
 over the time-interval \ $[0,T]$, \ where \ $b\in\RR$.
\ Then we have
 \begin{align}\nonumber
   & \Exp\left(\int_0^T(U^{0}_t-U_{t}^{\rm av})^2\,\dd t\right)
       = \frac{b^2}{4q}\cdot\frac{\sinh(2qT)-2qT}{\sinh^2(qT)}
         + \frac{\sigma^2\ee^{qT}}{4q^2}\cdot\frac{\sinh(2qT)-2qT}{\sinh(qT)},\\[2mm]\nonumber
   &   \Exp\left(\int_0^T(U^{0}_t-U_{t}^{\rm ir})^2\,\dd t\right)
       = \frac{b^2}{4q}\cdot\frac{\sinh(2qT)-2qT}{\sinh^2(qT)}
        + \frac{\sigma^2 \ee^{qT}}{4q^2}\cdot\frac{\sinh(2qT)-2qT}{\sinh(qT)}
        - \frac{\sigma^2}{q^2} \\\nonumber
   &\phantom{\Exp\left(\int_0^T(U^{0}_t-U_{t}^{\rm ir})^2\,\dd t\right) =}
        + \frac{T\sigma^2\cosh(qT)}{q\sinh(qT)}
        - \frac{\sigma^2T^2}{2}
        + \frac{\sigma^2T}{2q}
        - \frac{\sigma^2}{4q^2}
       + \frac{\sigma^2}{4q^2}\ee^{-2qT} \\\nonumber
   &\phantom{\Exp\left(\int_0^T(U^{0}_t-U_{t}^{\rm ir})^2\,\dd t\right) =}
      - \frac{\sigma^2}{q^2}
        \int_0^{qT}(1-\ee^{-2x})
        \log \frac{\sinh(qT)}{\sinh(x)}\, \dd x,\\[1mm]\nonumber
   &\Exp\left(\int_0^T(U^{0}_t-U_{t}^{\rm st})^2\,\dd t\right)
     = \frac{\sigma^2\ee^{qT}}{4q^2}\cdot\frac{\sinh(2qT)-2qT}{\sinh(qT)}
        + \frac{2\sigma^2}{q^2}\cdot\frac{\cosh(qT)-1}{\sinh(qT)} \\\nonumber
   &\phantom{\Exp\left(\int_0^T(U^{0}_t-U_{t}^{\rm st})^2\,\dd t\right) = }
        - \frac{\sigma^2}{q}\left(\frac{\sinh(2qT)}{2q}+T\right)
        + \frac{\sigma^2 \cosh(qT)}{2q^2\sinh(qT)}(\cosh(2qT)-1).
 \end{align}
\end{thm}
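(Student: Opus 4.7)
The plan is, for each bridge version, to apply the identity $\Exp((U_t^0 - U_t^{\rm br})^2) = \Var(U_t^0 - U_t^{\rm br}) + (\Exp(U_t^0 - U_t^{\rm br}))^2$ pointwise in $t$, read off the mean and variance from Proposition \ref{prop7}, and integrate over $[0,T]$; interchanging expectation and integration is justified by the monotone convergence argument given in the introduction. Since all three versions share the common mean $-b\sinh(qt)/\sinh(qT)$, the squared-mean contribution is identical across versions and reduces to the elementary computation $\int_0^T \sinh^2(qt)\,\dd t = (\sinh(2qT)-2qT)/(4q)$, which accounts for the $b^2$ terms appearing in the statement.

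For the anticipative version, \eqref{help_cond_gauss_ou1} reduces the variance integral to the same $\int_0^T \sinh^2(qt)\,\dd t$, so the formula is immediate. For the space-time transform I would work from the rearranged variance \eqref{help_exp_quad_ou_path_dev2}, which isolates the anticipative-type piece $\sigma^2\ee^{qT}\sinh^2(qt)/(q\sinh(qT))$ plus a correction $2\sigma^2\sinh(q(T-t))(1-\cosh(qt))/(q\sinh(qT))$. The correction is handled by the product-to-sum identity $\sinh A \cosh B = \frac{1}{2}[\sinh(A+B) + \sinh(A-B)]$, which turns both $\sinh(q(T-t))\cosh(qt)$ and $\sinh(q(T-t))$ into linear combinations of hyperbolic functions of linear arguments in $t$; all such primitives are elementary, and assembling the pieces produces the stated combination of $\cosh(qT)-1$, $\sinh(2qT)$ and $\cosh(2qT)-1$ terms.

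The integral representation is the main obstacle. Using \eqref{help_exp_quad_ou_path_dev1}, the variance again contains the anticipative-type piece plus a correction $\frac{2\sigma^2}{q}\sinh(q(T-t))\bigl[\sinh(qt)/\sinh(qT) - \ee^{-q(T-t)}(qt + \log(\sinh(qT)/\sinh(q(T-t))))\bigr]$. The first bracket yields the product-to-sum integral $\int_0^T \sinh(qt)\sinh(q(T-t))\,\dd t = T\cosh(qT)/2 - \sinh(qT)/(2q)$. For the remaining two pieces I would substitute $x=q(T-t)$, which turns $\ee^{-q(T-t)}\sinh(q(T-t))$ into $(1-\ee^{-2x})/2$ and reduces those integrals to $\frac{1}{2q}\int_0^{qT}(1-\ee^{-2x})(qT-x)\,\dd x$ and $\frac{1}{2q}\int_0^{qT}(1-\ee^{-2x})\log(\sinh(qT)/\sinh(x))\,\dd x$. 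The polynomial piece evaluates explicitly using integration by parts for $\int_0^{qT} x\ee^{-2x}\,\dd x$ together with elementary antiderivatives, while the logarithmic piece admits no elementary primitive and is precisely the residual integral $-\frac{\sigma^2}{q^2}\int_0^{qT}(1-\ee^{-2x})\log(\sinh(qT)/\sinh(x))\,\dd x$ left unevaluated in the stated formula. Collecting all contributions and simplifying the constants yields the claimed expression; the bookkeeping is tedious but routine, and the only genuinely non-elementary obstruction is the $\log$-integral, which is simply carried along.
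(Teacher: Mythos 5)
Your proposal is correct and follows essentially the same route as the paper: pointwise application of $\Exp(Y^2)=\Var(Y)+(\Exp Y)^2$ with the mean and variance from Proposition \ref{prop7}, the rearranged variances \eqref{help_exp_quad_ou_path_dev1}--\eqref{help_exp_quad_ou_path_dev2} to isolate the common piece $\sigma^2\ee^{qT}\sinh^2(qt)/(q\sinh(qT))$, product-to-sum identities and integration by parts for the elementary hyperbolic integrals, and the substitution $x=q(T-t)$ to carry along the non-elementary logarithmic integral. One remark: your (correct) observation that the squared-mean contribution $b^2\int_0^T\sinh^2(qt)/\sinh^2(qT)\,\dd t=\frac{b^2}{4q}\cdot\frac{\sinh(2qT)-2qT}{\sinh^2(qT)}$ is identical for all three versions means the space-time formula should also contain this $b^2$ term; the stated theorem (and the paper's own computation, which treats only the variance in the \textrm{st} case) omits it, so your approach, carried out faithfully, actually corrects a small omission in the stated result rather than reproducing it.
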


\begin{proof}
For the anticipative version, by Proposition \ref{prop7}, we get
 \begin{align*}
   &\Exp\left(\int_0^T(U^{0}_t-U_{t}^{\rm av})^2\,\dd t\right)
      = \int_0^T\Exp(U^{0}_t-U_{t}^{\rm av})^2\,\dd t\\
   &\qquad = \int_0^T \ee^{qT}\frac{\sigma^2}{q}\frac{\sinh^2(qt)}{\sinh(qT)}\,\dd t
       + \int_0^T b^2 \frac{\sinh^2(qt)}{\sinh^2(qT)}\,\dd t \\
   &\qquad = \frac{\ee^{qT}\sigma^2}{q\sinh(qT)} \int_0^T \sinh^2(qt)\,\dd t
       + \frac{b^2}{\sinh^2(qT)} \int_0^T \sinh^2(qt)\,\dd t \\
   &\qquad = \frac{\ee^{qT}\sigma^2}{2q\sinh(qT)}
       \left(\frac{\sinh(2qT)}{2q}-T\right)
      + \frac{b^2}{2\sinh^2(qT)} \left(\frac{\sinh(2qT)}{2q}-T\right).
 \end{align*}
For the integral representation, by Proposition \ref{prop7} and the previous calculations for
 the anticipative version, we get
 \begin{align*}
   &\Exp\left(\int_0^T(U^{0}_t-U_{t}^{\rm ir})^2\,\dd t\right)
      = \int_0^T\Exp(U^{0}_t-U_{t}^{\rm ir})^2\,\dd t \\
   &\phantom{\qquad} = \frac{b^2}{4q}\,\frac{\sinh(2qT)-2qT}{\sinh^2(qT)}
            + \frac{\sigma^2 \ee^{qT}}{4q^2}\frac{\sinh(2qT)-2qT}{\sinh(qT)} \\
   &\phantom{\qquad =\; }  - \frac{2\sigma^2}{q} \int_0^T \sinh(q(T-t)) \ee^{-q(T-t)}
            \left(qt + \log \frac{\sinh(qT)}{\sinh(q(T-t))}\right)\, \dd t\\
   &\phantom{\qquad = \;} + \frac{2\sigma^2}{q\sinh(qT)} \int_0^T \sinh(q(T-t))\sinh(qt)\, \dd t.
 \end{align*}
Here
 \begin{align*}
   \int\sinh(q(T-t))\sinh(qt)&\, \dd t
     = \int\Big(\sinh(qT)\cosh(qt) - \cosh(qT)\sinh(qt)\Big)\sinh(qt)\, \dd t \\
    & = \sinh(qT) \int \cosh(qt)\sinh(qt)\, \dd t
       - \cosh(qT) \int \sinh^2(qt)\, \dd t \\
    & = \frac{\sinh(qT)}{2} \int \sinh(2qt)\, \dd t
        - \frac{\cosh(qT)}{2}\left(\frac{\sinh(2qt)}{2q}-t\right) \\
    & = \frac{\sinh(qT)\cosh(2qt)}{4q}
       - \frac{\cosh(qT)\sinh(2qt)}{4q}
       +\frac{t\cosh(qT)}{2} \\
   & = \frac{\sinh(q(T-2t))}{4q} + \frac{t\cosh(qT)}{2},
 \end{align*}
 and hence
 \begin{align*}
    \frac{2\sigma^2}{q\sinh(qT)}
       &\int_0^T \sinh(q(T-t))\sinh(qt)\, \dd t\\
     &= \frac{2\sigma^2}{q\sinh(qT)}
        \left( -\frac{\sinh(qT)}{4q} + \frac{T\cosh(qT)}{2}
               - \frac{\sinh(qT)}{4q}\right) \\
    & = -\frac{\sigma^2}{q^2}
       +\frac{T\sigma^2\cosh(qT)}{q\sinh(qT)}.
 \end{align*}
We also have, by partial integration,
 \begin{align*}
   \int\sinh(q(T-t))\ee^{-q(T-t)}qt\, \dd t
   & = \int\frac{1-\ee^{-2q(T-t)}}{2}qt\, \dd t
     = \frac{q}{2}\left(\frac{t^2}{2}
       - \int t\ee^{-2q(T-t)}\, \dd t \right)\\
   & = \frac{qt^2}{4}
       - \frac{q}{2}\left(\frac{t\ee^{-2q(T-t)}}{2q}
                  - \int\frac{\ee^{-2q(T-t)}}{2q} \, \dd t\right) \\
   & = \frac{qt^2}{4} - \frac{t\ee^{-2q(T-t)}}{4}
      + \frac{\ee^{-2q(T-t)}}{8q},
 \end{align*}
 and hence
 \[
   -\frac{2\sigma^2}{q}\int_0^T\sinh(q(T-t))\ee^{-q(T-t)}qt\, \dd t
      = -\frac{2\sigma^2}{q}\left(\frac{qT^2}{4}-\frac{T}{4} +\frac{1}{8q}
           - \frac{1}{8q}\ee^{-2qT}\right).
 \]
Moreover, by the change of variables $q(T-t)=x$, we get
 \begin{align*}
   -\frac{2\sigma^2}{q}
     &  \int_0^T \sinh(q(T-t)) \ee^{-q(T-t)}
               \log \frac{\sinh(qT)}{\sinh(q(T-t))}\, \dd t\\
     & = -\frac{2\sigma^2}{q}\int_0^{qT} \frac{1-\ee^{-2x}}{2}
          \log \frac{\sinh(qT)}{\sinh(x)}\,\dd x,
 \end{align*}
 and then we get the formula for \ $\Exp\left(\int_0^T(U^{0}_t-U_{t}^{\rm ir})^2\,\dd t\right)$.
\ We note that we are unable to solve the integral
 \ $\int_0^{qT}(1-\ee^{-2x}) \log \frac{\sinh(qT)}{\sinh(x)}\, \dd x.$

Finally, for the space-time transform, by Proposition \ref{prop7} and the previous calculations
 for the anticipative version, we get
 \begin{align*}
   &\Exp\left(\int_0^T(U^{0}_t-U_{t}^{\rm st})^2\,\dd t\right)
      = \int_0^T\Exp(U^{0}_t-U_{t}^{\rm st})^2\,\dd t \\
   &  = \frac{\sigma^2\ee^{qT}}{4q^2}\cdot\frac{\sinh(2qT)-2qT}{\sinh(qT)}
         + \frac{2\sigma^2}{q\sinh(qT)}
            \int_0^T \sinh(q(T-t))(1-\cosh(qt))\,\dd t \\
   & = \frac{2\sigma^2}{q\sinh(qT)}
          \left(-\frac{1}{q}(1-\cosh(qT))
                -\int_0^T\sinh(q(T-t))\cosh(qt)\, \dd t
          \right) \\
   &\phantom{=\;} + \frac{\sigma^2\ee^{qT}}{4q^2}\cdot\frac{\sinh(2qT)-2qT}{\sinh(qT)}.
 \end{align*}
Here
 \begin{align*}
    \int_0^T&\sinh(q(T-t))\cosh(qt)\, \dd t \\
      &  = \sinh(qT) \int_0^T\cosh^2(qt)\, d t - \cosh(qT) \int_0^T\sinh(qt)\cosh(qt)\, \dd t \\
      & = \frac{\sinh(qT)}{2}\left(\frac{\sinh(2qT)}{2q}+T\right)
          - \frac{\cosh(qT)}{4q}(\cosh(2qT)-1),
 \end{align*}
 and hence we get the formula for \ $\Exp\left(\int_0^T(U^{0}_t-U_{t}^{\rm st})^2\,\dd t\right)$.
\end{proof}

We note that the formulas \ $\Exp\left(\int_0^T(U^{0}_t-U_{t}^{\rm br})^2\,\dd t\right)$ \
 are even harder to compare with each other than the variances in Proposition \ref{prop7} with each other.
It might also be possible to calculate the Gauss conditional distribution of path deviations given
 $U^{0}_t=d$ using Theorem 2 and Problem 5 in Chapter II, \S13 of Shiryaev \cite{Shi},
 and to calculate corresponding formulas for conditional quadratic path deviations.
But even if these formulas are present, the conditional quadratic path deviations will be hard to
 compare, since they will depend on the four parameters $q,b,d,T$ and possibly also on $\sigma$.
We renounce to give these explicit and likewise very long calculations.

\section{Appendix}

The following lemma yields almost sure (left) continuity at \ $t=T$ \
 of the integral representation of an Ornstein-Uhlenbeck bridge.

\begin{lemma}
Let \ $T\in(0,\infty)$ \ be fixed and let \ $(B_s)_{s\geq 0}$ \ be a one-dimensional standard Wiener process
 on a filtered probability space \ $(\Omega,\cA,(\cA_t)_{t\geq 0},\PP)$, \ where the filtration
 \  $(\cA)_{t\geq 0}$ \ is the usual augmentation of the natural filtration of the Wiener process
 \ $B$ \ (see, e.g., Karatzas and Shreve \cite[Section 5.2.A]{KarShr}).
The process \ $(Y_t)_{t\in[0,T]}$ \ defined by
 \[
    Y_t:=
         \begin{cases}
            \int_0^t\frac{\sinh(q(T-t))}{\sinh(q(T-s))}\,\dd B_s
               & \text{if \ $t\in[0,T)$,}\\
             0 & \text{if \ $t=T$,}
         \end{cases}
 \]
 is a centered Gauss process with almost sure continuous paths.
\end{lemma}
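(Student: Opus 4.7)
The plan is to proceed in three steps. First, on the half-open interval \ $[0,T)$ \ the process \ $Y$ \ is the stochastic integral of a deterministic square-integrable kernel against the Wiener process \ $B$, \ so that \ $(Y_t)_{t\in[0,T)}$ \ is automatically a centered Gauss process with almost surely continuous sample paths by standard It\^o-integral theory. Appending the value \ $Y_T=0$ \ preserves the Gaussian finite-dimensional distributions (with a degenerate marginal at \ $T$), \ so the only genuinely nontrivial point is to show that \ $Y_t\to 0$ \ almost surely as \ $t\uparrow T$.

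For this limit I would use a Dambis--Dubins--Schwarz time-change. Setting \ $M_t:=\int_0^t\frac{1}{\sinh(q(T-s))}\,\dd B_s$ \ for \ $t\in[0,T)$, \ one has \ $Y_t=\sinh(q(T-t))\,M_t$, \ where \ $M$ \ is a continuous centered Gauss martingale with quadratic variation
\[
  \tau(t):=\langle M\rangle_t
          =\int_0^t\frac{\dd s}{\sinh^2(q(T-s))}
          =\frac{1}{q}\bigl(\coth(q(T-t))-\coth(qT)\bigr)
          =\frac{\sinh(qt)}{q\sinh(q(T-t))\sinh(qT)},
\]
where the last equality is an elementary hyperbolic identity. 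This \ $\tau$ \ is strictly increasing and, for either sign of \ $q\neq 0$, \ diverges to \ $+\infty$ \ as \ $t\uparrow T$. \ Hence, by Dambis--Dubins--Schwarz (on an enlargement of the probability space if necessary), there exists a standard Wiener process \ $\tilde B$ \ on \ $[0,\infty)$ \ with \ $M_t=\tilde B_{\tau(t)}$ \ for all \ $t\in[0,T)$.

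Substituting back yields the factorisation
\[
  Y_t=\bigl(\sinh(q(T-t))\,\tau(t)\bigr)\cdot\frac{\tilde B_{\tau(t)}}{\tau(t)}
     =\frac{\sinh(qt)}{q\sinh(qT)}\cdot\frac{\tilde B_{\tau(t)}}{\tau(t)},\qquad t\in[0,T).
\]
The deterministic prefactor \ $\frac{\sinh(qt)}{q\sinh(qT)}$ \ is bounded on \ $[0,T]$ \ (it converges to \ $1/q$ \ as \ $t\uparrow T$), \ while the random factor \ $\tilde B_{\tau(t)}/\tau(t)$ \ tends to \ $0$ \ almost surely as \ $t\uparrow T$, \ by the strong law of large numbers for a standard Wiener process (e.g.\ Problem 2.9.3 in Karatzas and Shreve \cite{KarShr}) combined with \ $\tau(t)\to\infty$. \ Therefore \ $Y_t\to 0=Y_T$ \ almost surely, which establishes the desired continuity at \ $T$.

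The crux of the argument is the identification of a time-change making the quadratic variation explode as \ $t\uparrow T$; \ once that is done, the cancellation between the vanishing prefactor \ $\sinh(q(T-t))$ \ and the growth of \ $M_t$ \ is made transparent by the explicit formula for \ $\sinh(q(T-t))\,\tau(t)$, \ and the rest is a direct appeal to the strong law for Brownian motion. An alternative route would be a Kolmogorov-continuity criterion for \ $(Y_t)_{t\in[0,T]}$ \ based on fourth-moment estimates of the Gaussian increments, but this is more laborious near the singular endpoint and does not parallel the Wiener-bridge argument already quoted from Karatzas and Shreve \cite{KarShr} in the introduction.
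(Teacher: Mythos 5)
Your proof is correct and follows essentially the same route as the paper: both factor $Y_t=\sinh(q(T-t))M_t$ with $M_t=\int_0^t\sinh(q(T-s))^{-1}\,\dd B_s$, compute that $\langle M\rangle_t\to\infty$ while $\sinh(q(T-t))\langle M\rangle_t\to 1/q$ as $t\uparrow T$, and conclude by showing $M_t/\langle M\rangle_t\to 0$ almost surely. The only cosmetic difference is that you justify this last step via a Dambis--Dubins--Schwarz time change combined with the strong law for a standard Wiener process, whereas the paper directly cites the strong law of large numbers for continuous local martingales (L\'epingle; Revuz and Yor).
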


\begin{proof}
By Bauer \cite[Lemma 48.2]{Bau}, \ $(Y_t)_{t\in[0,T]}$ \ is a centered Gauss process.
To prove almost sure continuity, we follow the method of the proof of Lemma 5.6.9 in
 Karatzas and Shreve \cite{KarShr}.
For all \ $t\in[0,T)$, \ let
 \[
   M_t:=\int_0^t\frac{1}{\sinh(q(T-s))}\, \dd B_s.
 \]
Then \ $(M_t)_{t\in[0,T)}$ \ is a continuous, square-integrable martingale with respect to
 the filtration \ $(\cA_t)_{t\in[0,T)}$ \ and with quadratic variation
 \begin{align*}
   \langle M\rangle_t
     :=\int_0^t\frac{1}{\sinh^2(q(T-s))}\,\dd s
      =\frac{1}{q}(\coth(q(T-t))-\coth(qT)),\qquad t\in[0,T).
 \end{align*}
Then \ $\lim_{t\uparrow T}\langle M\rangle_t=\infty$.
By a strong law of large numbers for continuous local martingales, we get
 \[
    \PP\left(\lim_{t\uparrow T}\frac{M_t}{\langle M\rangle_t}=0\right)=1,
 \]
 see, e.g., L\'epingle \cite[Theoreme 1]{Lep} or $3^\circ)$ in Exercise 1.16 in Chapter V
 in Revuz and Yor \cite{RevYor}.
(We note that the above mentioned citations are about continuous local martingales with
 time-interval \ $[0,\infty)$, \ but they are also valid for continuous local martingales with
 time-interval \ $[0,T)$, \ $T\in(0,\infty)$, \ with appropriate modifications in their conditions,
 for such a formulation, see, e.g., Barczy and Pap \cite[Theorem 3.2]{BarPap}.)
Then we have
 \[
    Y_t=\sinh(q(T-t))M_t = \sinh(q(T-t))\langle M\rangle_t \frac{M_t}{\langle M\rangle_t},
        \qquad t\in(0,T).
 \]
Here
 \begin{align*}
   \lim_{t\uparrow T} \sinh(q(T-t))\langle M\rangle_t
      = \lim_{t\uparrow T} \frac{1}{q}\left( \cosh(q(T-t)) - \sinh(q(T-t))\coth(qT)\right)
      = \frac{1}{q}.
 \end{align*}
Hence we conclude \ $\PP(\lim_{t\uparrow T} Y_t=0)=1$.
\end{proof}

\bibliographystyle{plain}

\begin{thebibliography}{00}

\bibitem{BalPit} F. Balabdaoui and J. Pitman (2009)
 The distribution of the maximal difference between Brownian bridge and its concave majorant. {\it Bernoulli} {\bf 17}(1) 466--483.

\bibitem{BhaDey} K. Bharath and Dipak K. Dey (2011)
Test to distinguish a Brownian motion from a Brownian bridge using Polya tree process.
 {\it Statist. Probab. Lett.} {\bf 81} 140--145.

\bibitem{Barpbk} M. Barczy, and P. Kern (2010) Representations of multidimensional linear process bridges.
 {\em Arxiv}, URL: {\texttt http://arxiv.org/abs/1011.0067}

\bibitem{BarPap2} M. Barczy and G. Pap (2005)
 Connection between deriving bridges and radial parts from multidimensional Ornstein-Uhlenbeck processes.
 {\it Period. Math. Hungar.} {\bf 50}(1--2) 47--60.

\bibitem{BarPap} M. Barczy and G. Pap (2010)
 Asymptotic behavior of maximum likelihood estimator for time inhomogeneous diffusion processes.
 {\it J. Statist. Plann. Inference} {\bf 140}(6) 1576--1593.

\bibitem{Bau} H. Bauer, {\sl Probability Theory}. Walter de Gruyter, 1996.

\bibitem{BorSal} A.~N. Borodin and P. Salminen (2002) {\it Handbook of Brownian Motion -- Facts and Formulae},
                 2nd ed. Birkh\"auser, Basel.

\bibitem{BryWes}W. Bryc and J. Weso\l owski (2009)
Bridges of quadratic harnesses.\\
URL: {\texttt http://arxiv.org/abs/0903.0150}

\bibitem{ChaBra} L. Chaumont and G. Uribe Bravo (2011)
 Markovian bridges: weak continuity and pathwise constructions.
 {\it Ann. Probab.} {\bf 39}(2) 609--647.

\bibitem{DelHu} B. Delyon and Y. Hu (2006) Simulation of conditioned diffusion and application to parameter estimation. {\it Stochastic Process.~Appl.} {\bf 116} 1660--1675.

\bibitem{DonMar} C. Donati-Martin (1990) Le probl\'eme de Buffon-Synge pour une corde. {\it Adv.~Appl.~Probab.} {\bf 22} 375--395.

\bibitem{Das} A. DasGupta (1996) Distinguishing a Brownian bridge from a Brownian motion with drift.
 {\it Perdue University, West Lafayette, Technical Report} {\bf $\sharp$\,96-8}.

\bibitem{FitPitYor} P. Fitzsimmons, J. Pitman and M. Yor (1992) Markovian bridges: construction, Palm interpretation, and splicing. In: E. \c{C}inlar et al. (eds.), {\it Seminar on Stochastic Processes}, Progress in Probability {\bf 33}, Birkh\"auser, Boston, pp. 101--134.

\bibitem{GasSotVal} D. Gasbarra, T. Sottinen and E. Valkeila (2007) Gaussian bridges. In: F.E. Benth et al. (eds.), Abel Symposia 2, {\it Stochastic Analysis and Applications}, Proceedings of the Second Abel Symposium, Oslo, July 29 - August 4, 2005, held in honor of Kiyosi It\^{o}, Springer, New York, pp. 361--383.

 \bibitem{HGKL} J. S. Horne, E. O. Garton, S. M. Krone and J. S. Lewis (2007) Analyzing animal movements using Brownian bridges. {\it Ecology} {\bf 88}(9) 2354--2363.

\bibitem{IkeWat} N. Ikeda and S. Watanabe (1981)
 {\it Stochastic Differential Equations and Diffusion Processes.}
 North-Holland Publishing Co., Amsterdam-New York.

\bibitem{Lep} D. L\'epingle (1978)
 Sur les comportement asymptotique des martingales locales.
 {\sl Seminaire de Probabilites XII, Lecture Notes in Mathematics} {\bf 649},
  148--161.

\bibitem{KarShr} I. Karatzas and S.~E. Shreve (1991) {\it Brownian Motion and Stochastic Calculus},
 2nd ed. Springer, Berlin.

\bibitem{Oks} B. {\O}ksendal (2003) {\it Stochastic Differential Equations,} 6th ed.
 Springer, Berlin.

\bibitem{PapSan} L.~S. Papie\.{z} and G.~A. Sandison (1990) A diffusion model with loss of particles. {\it Adv. Appl. Probab.} {\bf 22} 533--547.

\bibitem{RevYor} D. Revuz and M. Yor (2001)
    {\it Continuous Martingales and Brownian Motion,} corrected 2nd printing
    of the 3rd ed. Springer-Verlag Berlin Heidelberg.

\bibitem{Shi} A.~N. Shiryaev (1996) {\it Probability}, 2nd ed. Springer, New York.

\end{thebibliography}

\end{document}